\def\O{{\mathcal O}}
\def\Ind{{\rm Ind}}
\def\Res{{\rm Res}}
\theoremstyle{plain}
\newtheorem{lem}{Lemma}[section]
\newtheorem{prop}[lem]{Proposition}
\newtheorem{cor}[lem]{Corollary}
\newtheorem{thm}[lem]{Theorem}
\theoremstyle{definition}
\newtheorem{Def}[lem]{Definition}
\theoremstyle{remark}
\newtheorem*{eg*}{Example}
\newtheorem*{rem}{Remark}
\newcommand{\del}{\partial}
\newcommand{\dom}{\vartriangleright}
\newcommand{\domeq}{\trianglerighteq}
\newcommand{\up}{\text{$\uparrow$}}
\newcommand{\down}{\text{$\downarrow$}}
\newcommand{\soc}{\operatorname{soc}}
\newcommand{\wt}{\widetilde}
\newcommand{\rad}{\operatorname{rad}}
\newcommand{\Ext}{\operatorname{Ext}}
\newcommand{\Jan}{\operatorname{Jan}}
\newcommand{\q}{\mathfrak{q}}
\begin{document}

\title[Weight $2$ blocks]{Weight $2$ blocks of general linear groups \\ and modular Alvis-Curtis duality}

\author{Sibylle Schroll}
\address[S. Schroll]{Mathematical Institute, 24--29 St Giles', Oxford, OX1 3LB, United Kingdom.}
\email{schroll@maths.ox.ac.uk}

\author{Kai Meng Tan}
\address[K. M. Tan]{Department of Mathematics, National University of Singapore, 2, Science Drive 2, Singapore 117543.}
\email{tankm@nus.edu.sg}

\subjclass[2000]{20C33, 20C08, 17B37}

\thanks{The authors thank the Department of Mathematics, NUS, and Karin Erdmann and the Mathematical Institute, Oxford, for their hospitality during their respective visits in 2006.  They acknowledge support through a Marie Curie fellowship and by Academic Research Fund R-146-000-089-112 of NUS respectively.}
\date{October 2007}

\begin{abstract}
We obtain the structure of weight $2$ blocks and $[2:1]$-pairs of $\q$-Schur algebras, and compute explicitly the modular Alvis-Curtis duality for weight $2$ blocks of finite general linear groups in non-defining characteristic.
\end{abstract}
\maketitle

\section{Introduction}

Let ${\mathbb F}_q$ be a finite field with $q$ elements, for some prime power $q$, and let $G(q)$ be a
finite reductive group defined over ${\mathbb F}_q$. Alvis-Curtis duality was originally defined (see, for example,
\cite[Chapter 5]{DM})
as a character duality in the Grothendieck group of $G(q)$.
Recently there has been renewed interested in Alvis-Curtis duality due to
some remarkable new results. Cabanes and Rickard~\cite{CR} showed that Alvis-Curtis
duality is induced by a derived equivalence proving thus a conjecture of Brou\'e~\cite{B}. In their paper they
conjectured that this derived equivalence should in fact be a homotopy equivalence, and a proof of this has recently been announced by Okuyama~\cite{O}.

On the other hand, Alvis-Curtis duality for general linear groups in non-defining characteristic has been linked to its decomposition numbers~\cite{AS}. More precisely, let ${\rm GL}_n(q)$ be the finite
general linear group with coefficients in ${\mathbb F}_q$, and let $k$ be an algebraically closed field of positive
characteristic $\ell$, where $\ell$ does not divide $q$; then Alvis-Curtis duality is a duality operation
in the Grothendieck group of $k {\rm GL}_n(q)$. Unlike the characteristic 0 case where the Alvis-Curtis dual of
an element of the Grothendieck group is explicitly known, all one can generally say in characteristic $\ell$ is that
the Alvis-Curtis dual of an element of the Grothendieck group of $k {\rm GL}_n(q)$ is given by a linear combination
of the basis elements. The difficulty in the computation of the coefficients in this linear combination
follows from the fact that the knowledge of these coefficients is equivalent to the knowledge of the decomposition
numbers of general linear groups~\cite{AS}.
As the complete determination of the latter is a longstanding open problem in
modular representation theory, it is not surprising that in general
not much can be said about modular Alvis-Curtis duality.

It is natural to attempt to compute these coefficients for unipotent blocks of finite general linear groups which are fairly well understood.  The representation theory of these blocks is very closely related to that of $q$-Schur algebras.  Among the simplest and yet non-trivial blocks are those with weight $2$.  However, while weight $2$ blocks have been studied extensively in the context of symmetric groups, Iwahori-Hecke algebras and Schur algebras (see, for example, \cite{S,R,CT}), the same cannnot be said for the $q$-Schur algebras (even though experts believe the results for Schur algebras \cite{CT} should generalise).

In this paper, we first study the structures of weight $2$ blocks of $\q$-Schur algebras for a general root of unity $\q$.  For such blocks where the characteristic of the underlying field is not $2$, we obtain closed formulas for the decomposition numbers (as well as the corresponding $v$-decomposition numbers arising from the canonical basis of the Fock space representation of $U_v(\widehat{\mathfrak{sl}}_e)$), and combinatorial descriptions of their $\Ext$-quivers and the Jantzen filtrations and radical filtrations of their Weyl modules.  We also show that the composition length of a Weyl module in such a block is bounded above by $5$.

With the knowledge of the decomposition numbers, it is in principle possible to compute the integers arising from Alvis-Curtis duality, but we find this hard in practice, owing to the difficulty in obtaining nice closed formulas for the entries of the inverse of the decomposition matrix.  We thus determine these integers in a more roundabout way.  We prove that over a $[2:k]$-pair, almost all of these integers `remain unchanged', and for those that do change, we provide a description on how they change.  With this, we are able to describe these integers for an arbitrary weight $2$ block from our knowledge for the weight $2$ Rouquier blocks (which can easily be computed using the closed formulas for $v$-decomposition numbers and their `inverses' obtained by Leclerc and Miyachi \cite{LM}).

This paper is organised as follows:  in the next section, we provide the relevant background and introduce the notations used in this paper.  We also prove some useful results relating to the $v$-decomposition numbers arising from the canonical basis of the Fock space representation of $U_v(\widehat{\mathfrak{sl}}_e)$, which may be of independent interests to the readers; results particularly worth mentioning are Corollary \ref{C:full}, Theorem \ref{T:R-H} and Proposition \ref{P:degree}.  In Sections $3$ and $4$, we study the structure of weight $2$ blocks and $[2:1]$-pairs respectively, while in section $5$, we compute the integers arising from Alvis-Curtis duality for the weight $2$ blocks.

\section{Background and Notations}

In this section we provide the relevant background and introduce the notations to be used throughout the paper.

\subsection{Partitions}

Let $\Lambda(n)$ be the set of all compositions of $n$ and ${\mathcal P}_n$ the subset of $\Lambda(n)$ consisting
of all partitions of $n$. Here, a composition $\lambda =(\lambda_1, \lambda_2, \dotsc, \lambda_k)$ of $n$ is
a finite sequence of non-negative integers such that $\sum_{i=1}^k \lambda_i = n$, and $\lambda$ is a
partition of $n$ if in addition, $\lambda_1 \geq \lambda_2 \geq \dotsb \geq \lambda_k$.
Denote by $l(\lambda)$, the number of non-zero parts of $\lambda \in \Lambda(n)$.

A sequence of $\beta$-numbers for a partition $\lambda$ is a strictly decreasing sequence of
 non-negative integers
$\beta = (\beta_1, \beta_2, \ldots, \beta_s)$ where $s \geq l(\lambda)$ and
$$
\beta_i =
\begin{cases}
\lambda_i + s -i, &\text{if } 1 \leq i \leq l(\lambda); \\
s-i, &\text{if } s > l(\lambda).
\end{cases}
$$

Let $e$ be a positive integer greater than or equal to $2$.  Following James, an $e$-abacus has $e$ vertical runners which are labelled $0,1, \ldots, e-1$. The positions on the
abacus are labelled, starting with $0$, from left to right and top to bottom.
If $\beta = (\beta_1, \beta_2, \dotsc, \beta_s)$ is a sequence of $\beta$-numbers for a partition $\lambda$, then for
each $1 \leq i \leq s$  we place a bead at position $\beta_i$ in order to obtain the $e$-abacus
display for $\lambda$ with $s$ beads.

Removing a rim hook of length $h$ from $\lambda$ corresponds in an abacus display of $\lambda$ to moving
a bead from position $a$ to a vacant position $b$ such that $a-b =h$. The leg-length
of the hook is given by the number of occupied positions between $a$ and $b$. When we slide all the beads as far up their respective runners as possible, we obtain the $e$-core of $\lambda$, and the total number of times
 we slide each bead up on their respective runners in doing so is the $e$-weight of $\lambda$.  The relative ($e$-)sign of $\lambda$, denoted by $\sigma_e(\lambda)$, can be defined as $(-1)^t$, where $t$ is the total leg-lengths of the $e$-hooks removed to obtain the $e$-core (see \cite[\S2]{MO}).

Let $\lambda'_j = |\{ i \mid \lambda_i \geq j \}|$ for all $j  \in {\mathbb Z}^+$, then
$\lambda'= (\lambda'_1, \lambda'_2, \dotsc)$ is the conjugate partition of $\lambda$. An abacus display of
$\lambda'$ can be obtained by rotating the abacus display of $\lambda$ through an angle of $\pi$, and by
reading the vacant positions as occupied and the occupied positions as vacant. Therefore, the $e$-core of
$\lambda'$ is the conjugate partition of the $e$-core of $\lambda$, and $\lambda'$ has the same $e$-weight
as $\lambda$.

The partition $\lambda$ is $e$-regular if whenever $\lambda_{i+1} =  \lambda_{i+2} =
\dotsb =\lambda_{i+j} >0$ then $j <e$, and $\lambda$ is $e$-restricted if $\lambda'$ is $e$-regular.   Mullineux
defined in~\cite{M} an involution $\lambda \mapsto m(\lambda)$ on the set of $e$-regular partitions of $n$.
This involution plays an important role in the representation theory of the symmetric groups and related algebras, and is closely
connected to the Alvis-Curtis duality for general linear groups, which we will describe in the next section.

The standard lexicographic and dominance order on $\mathcal{P}_n$ shall be denoted as $\geq$ and $\domeq$ respectively.

\subsection{Modular Alvis-Curtis duality for general linear groups}

Let $G = {\rm GL}_n(q)$ and let $(K,\O,k)$ be an $\ell$-modular system for some prime $\ell \nmid q$.

Denote by $T$ the maximal torus of invertible diagonal matrices in $G$,
by $U$ the group of upper unitriangular matrices
and set $B = UT$.
Let $W$ be the Weyl group of $G$. Then $W$ is isomorphic to the symmetric group $\mathfrak{S}_n$ on $n$ letters by identifying
$\mathfrak{S}_n$ with the subgroup of permutation matrices of
$G$.
For $\lambda \in \Lambda_n$,
denote by $\mathfrak{S}_\lambda$ the associated Young subgroup of
$\mathfrak{S}_n$ and by $Q_\lambda$ the standard parabolic subgroup
of $G$ generated by $B$ and $\mathfrak{S}_\lambda$. Denote by $U_\lambda$
the unipotent radical of $Q_\lambda$
and by $M_\lambda$ the standard Levi complement of $U_\lambda$ in
$Q_\lambda$.  Furthermore, let $e_{U_\lambda}= \frac{1}{|U_\lambda|} \sum_{u \in U_\lambda} u$.

Let $R \in \{K, \O, k \}$.  Alvis-Curtis character duality is defined as the duality operator on the Grothendieck group of $RG$ given by
$$ D_G(-)  =  \sum_{\lambda \in \Lambda(n)} (-1)^{n-l(\lambda)} RG e_{U_\lambda} \otimes_{RM_\lambda}
e_{U_\lambda} (-). $$

Following James \cite{J}, for each $\lambda \in \mathcal{P}_n$, we have a distinguished unipotent irreducible
$K{\rm GL}_{n}(q)$-module $S(1,\lambda)$ called a unipotent Specht module.
Note that the ordinary irreducible character of $S(1,\lambda)$
corresponds exactly to the unipotent character
$\chi_{1,\lambda}$ in the parametrization given by Green
\cite{G}.

As a $kG$-module, $S(1,\lambda)$ may not be irreducible.
However, it has a simple head
$L(1,\lambda)=S(1,\lambda) / \rad(S(1,\lambda))$.

It is well-known that the Alvis-Curtis dual of $S(1,\lambda)$ is $S(1,\lambda')$, i.e. $D_G([S(1,\lambda)]) = [S(1,\lambda')]$. But when we apply
$D_G$ to the simple modules $L(1,\lambda)$, all we can say is that
$D_G([L(1,\lambda)]) = \sum_{\mu \in{\mathcal P}_n} a_{\lambda\mu}[L(1,\mu)]$ for some integers $a_{\lambda\mu}$.
Ackermann and the first author \cite{AS} showed that there is a link between the
integers $a_{\lambda\mu}$ and the decomposition numbers of $G$. Namely,
define a $|{\mathcal P}_n| \times |{\mathcal P}_n|$ matrix
$\mathbf{A}_G = (a_{\lambda\mu})_{\lambda,\mu \in {\mathcal P}_n}$ where the rows and columns
are ordered in descending order with respect to the lexicographic order on $\mathcal{P}_n$.
Let $\mathbf{D}_G$ be the decomposition matrix of $G$ and denote by
$\mathbf{D}_u=([S(1,\lambda):L(1,\mu)])_{\lambda,\mu \in {\mathcal P}_n}$
the upper quadratic part of $\mathbf{D}_G$ corresponding the unipotent part. Let $\mathbf{P}$ be the permutation matrix given by the permutation on ${\mathcal P}_n$ sending
$\lambda$ to $\lambda'$. Then

\begin{thm}\cite[Theorem 3.2]{AS}\label{decompositionmatrixG}
With the notations above,
\begin{enumerate}
\item $\mathbf{A}_G = \mathbf{D}_u^{-1} \mathbf{P} \mathbf{D}_u$, in particular, $\mathbf{A}_G$ determines, and is determined by, $\mathbf{D}_u$.
\item The matrices $\mathbf{A}_{{\rm GL}_m(q^d)}$ arising from Alvis-Curtis duality of ${\rm GL}_m(q^d)$ for all $d$ and $m$ such that $dm \leq n$ determine $\mathbf{D}_G$.
\item The (entire) decomposition matrix of $G$ determines the Alvis-Curtis duality on all
 (that is unipotent and non-unipotent) irreducible $kG$-modules.
\end{enumerate}
\end{thm}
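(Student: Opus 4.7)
The plan is to first prove part (1) by comparing two expansions of $D_G([S(1,\lambda)])$, and then to leverage the Morita-theoretic description of the $\ell$-blocks of $\mathrm{GL}_n(q)$ (due to Brou\'e, Fong--Srinivasan and Dipper--James) to deduce parts (2) and (3) from the unipotent case.

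For (1), I start from the identity $D_G([S(1,\lambda)]) = [S(1,\lambda')]$ already recalled in the excerpt. Writing $d_{\lambda\mu} := [S(1,\lambda):L(1,\mu)]$, on one hand
\[
D_G([S(1,\lambda)]) = \sum_\mu d_{\lambda\mu}\, D_G([L(1,\mu)]) = \sum_{\mu,\nu} d_{\lambda\mu}\, a_{\mu\nu}\, [L(1,\nu)],
\]
and on the other, $[S(1,\lambda')] = \sum_\nu d_{\lambda'\nu}\, [L(1,\nu)]$. Equating the coefficients of each simple gives the matrix identity $\mathbf{D}_u \mathbf{A}_G = \mathbf{P}\mathbf{D}_u$. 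Because $\mathbf{D}_u$ is unitriangular with respect to dominance (i.e., $d_{\lambda\mu}\ne 0 \Rightarrow \mu \domeq \lambda$, with $d_{\lambda\lambda}=1$), it is invertible over $\mathbb{Z}$, giving the stated formula $\mathbf{A}_G = \mathbf{D}_u^{-1}\mathbf{P}\mathbf{D}_u$. The direction ``$\mathbf{D}_u$ determines $\mathbf{A}_G$'' is then immediate. For the converse, if $\mathbf{D}_u$ and $\mathbf{D}_u'$ are two such unitriangular solutions, then $\mathbf{C}:=\mathbf{D}_u' \mathbf{D}_u^{-1}$ is again unitriangular and satisfies $\mathbf{P}\mathbf{C}\mathbf{P}=\mathbf{C}$, that is $\mathbf{C}_{\lambda\mu}=\mathbf{C}_{\lambda'\mu'}$ for all $\lambda,\mu$. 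A nonzero off-diagonal entry $\mathbf{C}_{\lambda\mu}$ would force both $\mu\triangleright\lambda$ (from triangularity of $\mathbf{C}$) and $\mu'\triangleright\lambda'$ (applied to $\mathbf{C}_{\lambda'\mu'}$), but partition conjugation reverses dominance, giving a contradiction. Hence $\mathbf{C}=I$ and $\mathbf{D}_u$ is uniquely recovered from $\mathbf{A}_G$.

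For (2) and (3), I invoke the theorem of Brou\'e, Fong--Srinivasan and Dipper--James: each $\ell$-block $B$ of $\mathrm{GL}_n(q)$ is Morita equivalent to a product of unipotent blocks of smaller general linear groups $\mathrm{GL}_{m_i}(q^{d_i})$ with $\sum_i d_i m_i \le n$, and under this equivalence the decomposition matrix of $B$ becomes a tensor product of the unipotent decomposition matrices of those smaller groups. By part (1) applied to each factor, the matrices $\mathbf{A}_{\mathrm{GL}_{m_i}(q^{d_i})}$ determine the relevant unipotent decomposition matrices; assembling over all blocks yields $\mathbf{D}_G$, which is (2). For (3), the key point is that $D_G$ is defined purely via Harish-Chandra induction and restriction and so is compatible (up to an overall sign controlled by the Levi rank) with the Morita equivalences above. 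Consequently, the action of $D_G$ on the simples of an arbitrary block $B$ translates, under the Morita equivalence, into the Alvis-Curtis action on the unipotent simples of the corresponding smaller groups; by (1), the latter is determined by the unipotent decomposition matrices of those groups, which are part of $\mathbf{D}_G$.

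The main obstacle lies in part (3): verifying the compatibility between Alvis-Curtis duality and the Brou\'e--Fong--Srinivasan--Dipper--James Morita equivalences, that is, checking that these equivalences intertwine Harish-Chandra induction/restriction on both sides so that the alternating sum defining $D_G$ transfers cleanly (with the correct signs). Once this is in place, (2) and (3) follow quickly from (1), whose only subtle point is the uniqueness argument, which exploits the order-reversing property of partition conjugation.
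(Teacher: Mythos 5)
This statement is quoted from \cite[Theorem 3.2]{AS}; the paper gives no proof of its own, so I am judging your argument against the standard one in that source.

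Your part (1) is correct and is essentially the right argument: expanding $D_G([S(1,\lambda)])=[S(1,\lambda')]$ in the basis of unipotent simples gives $\mathbf{D}_u\mathbf{A}_G=\mathbf{P}\mathbf{D}_u$ (using, as the paper does, that $D_G$ preserves the span of the unipotent simples), and your uniqueness argument for the converse --- that a dominance-unitriangular matrix $\mathbf{C}$ with $\mathbf{C}=\mathbf{P}\mathbf{C}\mathbf{P}$ must be the identity because conjugation reverses dominance --- is clean and correct. Part (2) is also fine modulo the standard Dipper--James description of the $\ell$-blocks of ${\rm GL}_n(q)$, under which $\mathbf{D}_G$ is assembled from the unipotent decomposition matrices of the ${\rm GL}_m(q^d)$ with $dm\le n$; part (1) applied to each of these then recovers those matrices from the $\mathbf{A}_{{\rm GL}_m(q^d)}$.

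The gap is in part (3). You rest it on the claim that $D_G$ is ``compatible (up to sign) with the Brou\'e--Fong--Srinivasan--Dipper--James Morita equivalences,'' and you correctly identify this as the main obstacle --- but it is not a fact you can invoke off the shelf, and proving that those equivalences intertwine Harish--Chandra induction and restriction in the way your alternating sum requires is a substantial project in itself (it is close in spirit to the Cabanes--Rickard circle of ideas, not to elementary block theory). The intended argument avoids this entirely: since the functors $RGe_{U_\lambda}\otimes_{RM_\lambda}e_{U_\lambda}(-)$ are defined over $\mathcal{O}$, the duality $D_G$ commutes with the decomposition map $K_0(KG)\to K_0(kG)$; and in characteristic zero the Alvis--Curtis dual of \emph{every} ordinary irreducible character of ${\rm GL}_n(q)$ (unipotent or not) is, up to an explicitly known sign, another irreducible character via an explicitly known permutation. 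Writing this as $\mathbf{D}_G\mathbf{A}=\mathbf{P}^{\pm}\mathbf{D}_G$ with $\mathbf{P}^{\pm}$ a known signed permutation matrix, and using that $\mathbf{D}_G$ has full column rank (it contains a unitriangular square block on a basic set), one solves for $\mathbf{A}$ in terms of $\mathbf{D}_G$ alone. I would replace your reduction-to-the-unipotent-case strategy for (3) by this direct computation, which is just part (1) redone with the full, non-square decomposition matrix.
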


Furthermore, Alvis-Curtis duality is closely linked to tensoring with the sign representation of the symmetric
group. For the modular Alvis-Curtis duality this implies that in the columns corresponding to $e$-regular
partitions, there is only one non-zero entry which is equal to one and it is determined by the Mullineux map.

\begin{thm}[{\cite[Theorem 4.1]{AS}}] \label{Mullineux}
Let $\lambda$ and $\mu$ be two partitions of $n$, with $\mu$ $e$-regular.
Then
$$
a_{\lambda\mu}  =
\begin{cases}
1 & \text{if } \lambda = m(\mu); \\
0 & \text{otherwise.}
\end{cases}
$$
\end{thm}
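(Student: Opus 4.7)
The plan is to use the matrix identity $\mathbf{A}_G = \mathbf{D}_u^{-1} \mathbf{P} \mathbf{D}_u$ from Theorem \ref{decompositionmatrixG}(1) to reduce the claim to a single identity on decomposition numbers, and then to verify that identity on the Iwahori--Hecke algebra side using the Mullineux theorem.

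First I rewrite the matrix identity as $\mathbf{D}_u \mathbf{A}_G = \mathbf{P} \mathbf{D}_u$; reading off the $(\nu,\mu)$-entry yields, for every partition $\nu$,
\[
\sum_{\lambda} d_{\nu, \lambda}\, a_{\lambda, \mu} \;=\; d_{\nu', \mu},
\]
where $d_{\lambda, \mu} = [S(1, \lambda) : L(1, \mu)]$. Since $\mathbf{D}_u$ is unitriangular with respect to the dominance order on partitions (coming from the highest-weight structure of the $q$-Schur algebra), this linear system uniquely determines each column of $\mathbf{A}_G$. Hence to prove $a_{\lambda, \mu} = \delta_{\lambda, m(\mu)}$ for $\mu$ $e$-regular, it suffices to check the single identity
\[
d_{\nu, m(\mu)} \;=\; d_{\nu', \mu} \qquad \text{for every partition $\nu$ and every $e$-regular $\mu$}.
\]

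Next I would establish this identity by passage to the Iwahori--Hecke algebra $H = H_{q,n}(k)$ of type $A$. Under the Schur--Hecke comparison, the $q$-Schur decomposition number $d_{\nu,\mu}$ for $\mu$ $e$-regular coincides with the Hecke-algebra decomposition number $[S^\nu : D^\mu]$. On the Hecke side, tensoring with the sign representation is an auto-equivalence, there is an isomorphism $S^\nu \otimes \operatorname{sgn} \cong (S^{\nu'})^*$, and each simple module $D^\mu$ with $\mu$ $e$-regular is self-dual. Combining these with the Mullineux theorem (proved for Hecke algebras by Brundan and Ford--Kleshchev), which asserts $D^\mu \otimes \operatorname{sgn} \cong D^{m(\mu)}$, gives
\[
[S^{\nu'} : D^\mu] \;=\; [S^\nu \otimes \operatorname{sgn} : D^\mu] \;=\; [S^\nu : D^\mu \otimes \operatorname{sgn}] \;=\; [S^\nu : D^{m(\mu)}],
\]
which is the required identity after pulling back through the Schur--Hecke comparison.

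The main obstacle is the input from the Mullineux theorem, namely the identification of the combinatorial involution $m$ with the involution induced on $e$-regular labels by the sign-twist of simple $H$-modules; this is the genuine content of the argument, whereas the reduction to a single decomposition-number identity and the subsequent manipulations using self-duality of Specht and simple modules are essentially formal.
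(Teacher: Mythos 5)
Your argument is correct and is essentially the intended one: the paper imports this statement from [AS] without proof, but within its own framework the computation is $a_{\lambda\mu}=\sum_\nu e_{\lambda\nu}d_{\nu'\mu}=\sum_\nu e_{\lambda\nu}d_{\nu m(\mu)}=\delta_{\lambda m(\mu)}$, which is your reduction read through the inverse matrix rather than the original system. The key identity $d_{\nu'\mu}=d_{\nu m(\mu)}$ for $\mu$ $e$-regular, which you derive from the Mullineux theorem on the Hecke-algebra side, is already recorded in the paper as Lemma \ref{L:basic}(2c), so it could simply be cited (modulo the usual care with conjugation conventions in the Schur--Hecke comparison).
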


\subsection{The $\q$-Schur algebra}

Let $F$ be a field of characteristic $p$, which can be zero or positive.  Let $\q \in F$ be a root of unity ($\q \ne 1$ if $p = 0$), and let $e$ be the least positive integer such that $1+\q+\dotsb+\q^{e-1} = 0$. Denote by $\mathcal{S}_n = \mathcal{S}_\q(n,n)$ the $\q$-Schur algebra (over $F$) as defined by Dipper and James in~\cite{DJ}. This has a distinguished class $\{ \Delta^{\mu} \mid \mu \in \mathcal{P}_n \}$ of modules called Weyl modules.  Each $\Delta^{\mu}$ has a simple head $L^{\mu}$ which is self-dual with respect to the contravariant duality induced by the anti-automorhpism of $\mathcal{S}_n$, and the set $\{ L^{\mu} \mid \mu \in \mathcal{P}_n \}$ is a complete set of mutually non-isomorphic simple modules of $\mathcal{S}_n$.  Note that $\Delta^{\mu}$ and $L^{\mu}$ are denoted as $W^{\lambda'}$ and $F^{\mu'}$ respectively in \cite{DJ2}.  The projective cover $P^\mu$ of $L^\mu$ (or of $\Delta^\mu$) has a filtration in which each factor is isomorphic to a Weyl module, and $\Delta^\mu$ occurs exactly once, at the top.  Furthermore, the multiplicity of $\Delta^{\lambda}$ in such a filtration is well-defined, and is equal to the multiplicity of $L^{\mu}$ as a composition factor of $\Delta^\lambda$.  We denote this multiplicity as $d_{\lambda\mu}$, which is a decomposition number of $\mathcal{S}_n$.

It is shown in \cite{DJ} that when $F = k$ and $q \cdot 1_F = \q$, the composition multiplicity of the simple module $L(1,\mu)$ in the Specht module $S(1,\lambda)$ of ${\rm GL}_n(q)$ equals $d_{\lambda\mu}$ for $\lambda,\mu \in \mathcal{P}_n$.

Denote by $c_{\lambda\mu}$ the composition multiplicity of $L^{\mu}$ in $P^{\lambda}$.  Then
$$c_{\lambda\mu} = \sum_{\nu \in \mathcal{P}_n} d_{\nu\lambda}d_{\nu\mu} = c_{\mu\lambda}.$$
Furthermore, define $e_{\lambda\mu}$ so that $\sum_{\nu \in \mathcal{P}_n} e_{\lambda\nu}d_{\nu\mu} = \delta_{\lambda\mu}$, and let $a_{\lambda\mu} = \sum_{\nu} e_{\lambda\nu}d_{\nu'\mu}$.  Thus when $F=k$ and $q \cdot 1_F = \q$, the integers $a_{\lambda\mu}$ are precisely the entries of the matrix $\mathbf{A}_{{\rm GL}_n(q)}$ by Theorem \ref{decompositionmatrixG}(1).

For $\lambda,\mu \in \mathcal{P}_n$, the Weyl modules $\Delta^\lambda$ and $\Delta^\mu$ lie in the same block of $\mathcal{S}_n$ if and only if $\lambda$ and $\mu$ have the same $e$-core, and hence the same $e$-weight as well.  Thus, $e$-core and $e$-weight are block invariants of $\q$-Schur algebras.  When $\Delta^{\lambda}$ lies in a block $B$ of $\mathcal{S}_n$, we say that $\lambda$ is a partition in $B$, and the $e$-core (resp.\ $e$-weight) of $\lambda$ is the $e$-core (resp.\ $e$-weight, or simply, weight) of $B$.

It is clear that $d_{\lambda\mu}$ depends on both $e$ and the characteristic $p$ of $F$.  Whenever the need to mention specifically what $e$ and $p$ are arises, we will write it as $d^{e,p}_{\lambda\mu}$.

The following are some well-known results of $\mathcal{S}_n$.

\begin{lem} \label{L:basic} \hfill
\begin{enumerate}
\item If $d_{\lambda\mu} \ne 0$, then $\lambda$ and $\mu$ have the same $e$-core and the same $e$-weight, and $\mu \domeq \lambda$.  Furthermore, $d_{\mu\mu} = 1$.

\item If $\mu$ is $e$-regular, then

\begin{enumerate}
\item $P^{\mu}$ is self-dual,

\item $L^{\mu}$ is the socle of $\Delta^{m(\mu)'}$,

\item $d_{\lambda\mu} = d_{\lambda'm(\mu)}$.
\end{enumerate}
In particular, $d_{m(\mu)'\mu} = 1$, and if $d_{\lambda\mu} \ne 0$, then $\mu \domeq \lambda \domeq m(\mu)'$.

\item $d_{\lambda\mu}^{e,0} \leq d_{\lambda\mu}^{e,p}$ for all primes $p$.
\end{enumerate}
\end{lem}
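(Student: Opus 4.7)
My plan is to establish each part by invoking standard structure results for the $\q$-Schur algebra $\mathcal{S}_n$ due to Dipper-James \cite{DJ,DJ2}, together with the Hecke-algebra formulation of Mullineux's conjecture (proved by Kleshchev and Ford-Kleshchev).

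For part (1), the block assertion is built into the block classification of $\mathcal{S}_n$ recalled in the paragraph immediately preceding the lemma: a nonzero $d_{\lambda\mu}=[\Delta^\lambda:L^\mu]$ forces $\Delta^\lambda$ and $L^\mu$ to share a block, so $\lambda$ and $\mu$ have a common $e$-core and $e$-weight. The bound $\mu\domeq\lambda$ and the normalisation $d_{\mu\mu}=1$ are the standard highest-weight/cellular axioms for $\mathcal{S}_n$: $\Delta^\lambda$ has simple head $L^\lambda$ appearing exactly once, and every composition factor $L^\mu$ satisfies $\mu\domeq\lambda$.

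For part (2), I would proceed in the order (a), (c), (b). For (a), the self-duality of $P^\mu$ when $\mu$ is $e$-regular is the classical Young-module realisation: $P^\mu$ coincides with the unique indecomposable summand of the permutation (``Young'') module $M^\mu$ whose head is $L^\mu$, and since $M^\mu$ is self-dual under the contravariant duality, so is that summand. For (c), I pass through the Dipper-James Schur functor $\mathbf{F}:\mathcal{S}_n\text{-mod}\to\H_n\text{-mod}$, which is exact, sends $\Delta^\lambda\mapsto S^\lambda$ and (for $\mu$ $e$-regular) $L^\mu\mapsto D^\mu$, so that $d_{\lambda\mu}=[S^\lambda:D^\mu]$. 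The Mullineux theorem $D^\mu\otimes\text{sgn}\cong D^{m(\mu)}$ together with the standard sign-twist relation between $S^\lambda$ and $S^{\lambda'}$ on $\H_n$ then yield $[S^\lambda:D^\mu]=[S^{\lambda'}:D^{m(\mu)}]=d_{\lambda'm(\mu)}$, proving (c). The two ``in particular'' statements follow at once: $d_{m(\mu)'\mu}=d_{m(\mu)m(\mu)}=1$ by (c) and part (1), and $d_{\lambda\mu}\neq0$ gives $d_{\lambda'm(\mu)}\neq0$, hence $m(\mu)\domeq\lambda'$ by part (1), i.e.\ $\lambda\domeq m(\mu)'$. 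For (b), $P^\mu$ is self-dual by (a), so has simple socle $L^\mu$; in any Weyl filtration of $P^\mu$ whose weights refine dominance order, the bottom Weyl factor must be $\Delta^{m(\mu)'}$ by the dominance bound just proved, and it embeds as a submodule of $P^\mu$, so $\soc(\Delta^{m(\mu)'})\subseteq\soc(P^\mu)=L^\mu$; combined with $d_{m(\mu)'\mu}=1$ this yields $\soc(\Delta^{m(\mu)'})=L^\mu$.

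Part (3) follows from the classical adjustment-matrix factorisation: in an appropriate modular system with residue characteristic $p$ one has $\mathbf{D}^{e,p}=\mathbf{D}^{e,0}\cdot A$ with $A$ a non-negative integer matrix with ones on the diagonal, whence $d_{\lambda\mu}^{e,p}\geq d_{\lambda\mu}^{e,0}\cdot A_{\mu\mu}=d_{\lambda\mu}^{e,0}$. The main subtlety in the whole lemma is the Schur-to-Hecke transfer needed for (2); once the Schur-functor/sign-twist/Mullineux triangle is set up correctly, the remainder is a compilation from the Dipper-James literature.
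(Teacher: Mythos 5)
The paper itself offers no proof of this lemma (it is introduced as a list of well-known facts), so your proposal has to stand on its own merits. Most of it does: part (1), part (2)(c) via the Schur functor, the sign twist on the Hecke algebra and Brundan's Mullineux theorem, the two ``in particular'' deductions, part (2)(b) \emph{granted} (2)(a), and part (3) via the adjustment matrix are all correct compilations of standard results, assembled in a sensible order. In particular your rearrangement of a Weyl filtration of $P^\mu$ so that $\Delta^{m(\mu)'}$ sits at the bottom is legitimate, because every other Weyl factor $\Delta^\nu$ of $P^\mu$ satisfies $\nu \dom m(\mu)'$ and $\Ext^1$ between Weyl modules vanishes in the required direction.

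The genuine gap is in (2)(a), and it matters because (2)(b) rests on it. Your argument --- $P^\mu$ is the unique indecomposable summand with head $L^\mu$ of a self-dual ``permutation'' module $M^\mu$ --- never uses the hypothesis that $\mu$ is $e$-regular, so if it were valid it would prove that \emph{every} $P^\mu$ is self-dual, which is false: already for $\mathcal{S}_2$ with $e=2$ one has $P^{(1,1)}=\Delta^{(1,1)}$, with head $L^{(1,1)}$ and socle $L^{(2)}$. The underlying error is that the self-duality of the symmetric-group permutation module does not transport across the Schur functor: the natural $\q$-Schur-algebra analogue, the weight-space projective $\mathcal{S}_n\xi_\nu$, is \emph{not} contravariantly self-dual in general (in the same example $\mathcal{S}_2\xi_{(2)}\cong P^{(1,1)}$). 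The self-dual module one must use is $q$-tensor space $E^{\otimes n}\cong\mathcal{S}_n\xi_{(1^n)}$, which is projective and self-dual (hence also injective); its indecomposable summands are the $P^\nu$ occurring with multiplicity equal to the dimension of the $(1^n)$-weight space of $L^\nu$, i.e.\ $\dim D^\nu$, which is nonzero precisely when $\nu$ is $e$-regular --- this is where the regularity hypothesis actually enters. Even then one must still check that the contravariant dual of $P^\mu$, which is the injective hull $I^\mu$ and is again a summand of $E^{\otimes n}$, is isomorphic to $P^\mu$ itself and not to some other summand $P^\nu$; this follows by comparing the extremal weights of the $\Delta$- and $\nabla$-filtrations of the resulting projective-injective tilting summand, or by citing Donkin's theorem identifying the projective tilting modules of ($q$-)Schur algebras. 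As written, your (2)(a) asserts the conclusion rather than proving it.
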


\begin{cor} \label{C:basice}
If $e_{\lambda\mu} \ne 0$, then $\lambda$ and $\mu$ have the same $e$-core and the same $e$-weight, and $\mu \domeq \lambda$,  and $e_{\mu\mu} = 1$.
\end{cor}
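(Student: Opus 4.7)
The plan is to recognise the decomposition matrix $\mathbf{D} = (d_{\lambda\mu})_{\lambda,\mu \in \mathcal{P}_n}$ as a block-diagonal unitriangular matrix and then simply read off the corresponding properties of its inverse $\mathbf{E} = (e_{\lambda\mu})_{\lambda,\mu \in \mathcal{P}_n}$.

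First, I would fix a total order on $\mathcal{P}_n$ refining the dominance order (for concreteness, the lexicographic order will do, since it refines $\domeq$), and index the rows and columns of $\mathbf{D}$ with respect to it. By Lemma \ref{L:basic}(1), the entry $d_{\lambda\mu}$ vanishes unless $\lambda$ and $\mu$ share the same $e$-core (and hence the same $e$-weight, since $|\lambda|=|\mu|=n$); moreover, in that case one has $\mu \domeq \lambda$, and $d_{\mu\mu}=1$. Partitioning $\mathcal{P}_n$ according to the $e$-core thus exhibits $\mathbf{D}$ as a block-diagonal matrix whose diagonal blocks are each lower unitriangular with respect to the chosen linear order.

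The defining relation $\sum_\nu e_{\lambda\nu} d_{\nu\mu} = \delta_{\lambda\mu}$ says exactly that $\mathbf{E} = \mathbf{D}^{-1}$. Since the inverse of a block-diagonal matrix is block-diagonal with the same block decomposition, and the inverse of a lower unitriangular matrix is again lower unitriangular with $1$'s on the diagonal, $\mathbf{E}$ is itself block-diagonal with blocks indexed by $e$-cores, and each block is lower unitriangular. Translating back into entry-wise statements yields the corollary: if $e_{\lambda\mu}\ne 0$, then $\lambda$ and $\mu$ lie in the same diagonal block, so they share an $e$-core (and hence an $e$-weight), and $\mu \domeq \lambda$; and unitriangularity of the blocks gives $e_{\mu\mu}=1$.

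The only issue requiring any care is bookkeeping around the ordering convention (whether one obtains an upper or lower unitriangular matrix); apart from that, no induction or case analysis is needed, and this essentially completes the proof.
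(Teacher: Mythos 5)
Your overall strategy---invert the block-diagonal unitriangular decomposition matrix---is exactly the paper's (its proof is the one-line observation that $(e_{\lambda\mu})$ is the inverse of $(d_{\lambda\mu})$, combined with Lemma \ref{L:basic}(1)). The block-diagonal part of your argument is sound: matrices supported on a fixed block decomposition form a unital subalgebra, so the inverse has the same block support, giving ``same $e$-core and same $e$-weight''; and $e_{\mu\mu}=1$ follows from unitriangularity.

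There is, however, a genuine (if small) gap in the step where you deduce $\mu \domeq \lambda$. After refining $\domeq$ to a total order and observing that $\mathbf{E}=\mathbf{D}^{-1}$ is lower unitriangular with respect to that total order, the entry-wise statement you can legitimately ``translate back'' is only that $e_{\lambda\mu}=0$ unless $\mu\geq\lambda$ in the \emph{total} order---not unless $\mu\domeq\lambda$. Because the lexicographic order strictly refines dominance, triangularity with respect to it is a weaker property than being supported on the dominance relation, and the stronger property is not automatically recovered for the inverse by this route. To get the dominance statement you should work with the partial order directly: the matrices $M$ with $m_{\lambda\mu}=0$ unless $\mu\domeq\lambda$ form a unital subalgebra (by reflexivity and transitivity of $\domeq$); writing $\mathbf{D}=I+N$ with $N$ supported on the strict relation, $N$ is nilpotent and $\mathbf{D}^{-1}=\sum_{k\geq 0}(-N)^k$ lies in the same subalgebra. (Equivalently, induct along $\domeq$ using $e_{\lambda\mu}=\delta_{\lambda\mu}-\sum_{\nu\neq\mu}e_{\lambda\nu}d_{\nu\mu}$.) With that replacement your proof is complete and coincides with the paper's.
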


\begin{proof}
This follows from Lemma \ref{L:basic}(1) and the fact that $(e_{\lambda\mu})$ is the inverse matrix of $(d_{\lambda\mu})$.
\end{proof}

\subsection{The Jantzen filtration} \label{S:Jantzen}

Let $\Delta^\lambda = \Delta^\lambda(0) \supseteq \Delta^\lambda(1) \supseteq \Delta^\lambda(2) \supseteq \dotsb$
be the Jantzen filtration of the Weyl module $\Delta^\lambda$.  Thus $\Delta^\lambda(1) = \rad(\Delta^\lambda)$, and $\frac{\Delta^\lambda(i)}{\Delta^\lambda(i+1)}$ are self dual for all $i$.

Let $\lambda$ be a partition, and consider its $e$-abacus display, with $N$ beads say.  Suppose in moving a bead, say at position $a$, up its runner to some vacant position, say $a-ie$, we obtain (the abacus display of) a partition $\mu$.  Write $l_{\lambda \mu}$ for the number of occupied positions between $a$ and $a-ie$, and let $h_{\lambda\mu} = i$.  Also, write $\lambda \xrightarrow{\mu} \tau$ if the abacus display of $\mu$ with $N$ beads is also obtained from that of $\tau$ by moving a bead at position $b$ to $b-ie$, and $a < b$.  Thus if $\lambda \xrightarrow{\mu} \tau$, then the abacus display of $\tau$ with $N$ beads may be obtained from $\lambda$ in two steps: first move the bead at position $a$ to position $a-ie$ (which yields the abacus display of $\mu$), and then move the bead at position $b-ie$ to position $b$.

When $\lambda$ and $\mu$ are distinct partitions having the same $e$-core and the same $e$-weight, let
$$J_{\lambda\mu} = \sum (-1)^{l_{\lambda\rho} + l_{\tau\rho} + 1} (1+\nu_p(h_{\lambda \rho})) d_{\tau\mu},$$
where the sum runs through all $\tau$ and $\rho$ such that $\lambda \xrightarrow{\rho} \tau$, and where $\nu_p$ denotes the standard $p$-valuation if $p > 0$ and $\nu_0(x) = 0$ for all $x$.

Just as with $d_{\lambda\mu}$, we will write $J_{\lambda\mu}$ as $J_{\lambda\mu}^{e,p}$ when the need to specify what $e$ and $p$ are arises.


\begin{thm}[{see, for example, \cite[5.32]{MathasBook}}] \label{T:Jantzen}
Let $\lambda$ and $\mu$ be distinct partitions having the same $e$-core and the same $e$-weight.  Then
$$
J_{\lambda\mu} = \sum_{i\in \mathbb{Z}^+} [\Delta^\lambda(i) : L^\mu].
$$
In particular, $d_{\lambda \mu} \leq J_{\lambda\mu}$, and $d_{\lambda \mu} = 0$ if and only if $J_{\lambda\mu} = 0$.
\end{thm}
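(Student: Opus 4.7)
The plan is to adapt the standard Jantzen sum formula argument to the $\q$-Schur algebra. First I would realise $\Delta^\lambda$ as the reduction modulo the maximal ideal of an integral Weyl module defined over a suitable discrete valuation ring with residue field $F$, chosen so that on the generic fibre the corresponding $\q$-Schur algebra is semisimple. The cellular structure of Dipper and James equips this integral Weyl module with a contravariant bilinear form taking values in the DVR, and the Jantzen filtration $\Delta^\lambda(i)$ is by construction the image in $\Delta^\lambda$ of the $i$-th layer of the form, so that $\Delta^\lambda(1) = \rad(\Delta^\lambda)$ and each quotient $\Delta^\lambda(i)/\Delta^\lambda(i+1)$ is self-dual.

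With this setup in place, I would invoke the abstract Jantzen sum formula, which expresses $\sum_{i > 0}[\Delta^\lambda(i)]$ in the Grothendieck group of $\mathcal{S}_n$ as a $\mathbb{Z}$-linear combination of the classes $[\Delta^\tau]$, with coefficients determined by the valuations of the elementary divisors of the Gram matrix of the contravariant form. The theorem therefore reduces to identifying these coefficients with the combinatorial expression defining $J_{\lambda\mu}$, before composition multiplicities of $L^\mu$ are taken. This identification is the heart of the argument and the step I expect to be the main obstacle.

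To carry it out I would appeal to the determinantal formula for the Gram matrix, which factors its determinant into a product of $\q$-analogues of hook lengths. Passing to the abacus display, the contribution of an $ie$-hook to the $p$-adic valuation of the determinant becomes $1 + \nu_p(i) = 1+\nu_p(h_{\lambda\rho})$, where $\rho$ is obtained from $\lambda$ by the corresponding bead slide. These hook moves pair up with partitions $\tau$ satisfying $\lambda \xrightarrow{\rho} \tau$ via a second bead slide, and the signs $(-1)^{l_{\lambda\rho} + l_{\tau\rho} + 1}$ arise from the leg-length signs attached to the two hook moves together with the standard sign in the Jantzen formula. Summing these contributions yields
\[
\sum_{i > 0}[\Delta^\lambda(i)] = \sum_{\tau} \biggl( \sum_{\rho \,:\, \lambda \xrightarrow{\rho} \tau} (-1)^{l_{\lambda\rho} + l_{\tau\rho} + 1}(1+\nu_p(h_{\lambda\rho})) \biggr) [\Delta^\tau].
\]

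Finally, taking the multiplicity of $L^\mu$ on both sides and using $[\Delta^\tau : L^\mu] = d_{\tau\mu}$ gives $\sum_{i > 0}[\Delta^\lambda(i) : L^\mu] = J_{\lambda\mu}$, which is the displayed equation. For the concluding clause, note that $\lambda \neq \mu$ forces $[\Delta^\lambda(1) : L^\mu] = [\rad(\Delta^\lambda) : L^\mu] = d_{\lambda\mu}$, so $d_{\lambda\mu} \leq J_{\lambda\mu}$, and $d_{\lambda\mu} = 0$ if and only if $L^\mu$ appears nowhere in the filtration, equivalently $J_{\lambda\mu} = 0$.
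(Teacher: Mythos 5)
The paper offers no proof of this statement: it is quoted directly from the literature (the $q$-analogue of the Jantzen--Schaper theorem, due to James and Mathas, cited here as \cite[5.32]{MathasBook}), and your outline is precisely a sketch of that cited argument --- integral Weyl module over a DVR, contravariant form, Gram determinant factored into $q$-hook lengths, translation to bead moves on the abacus, and the elementary deduction of the final clause from $\Delta^\lambda(1)=\rad(\Delta^\lambda)$ together with the fact that the filtration is decreasing and terminates. So this is essentially the same approach as the source the paper relies on; the step you rightly single out as the main obstacle (upgrading the determinant factorization to an identity in the Grothendieck group rather than a mere dimension count) is exactly where the cited proof does its real work.
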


\subsection{Restriction and induction}

Given $m, n \in \mathbb{Z}^+$ with $m < n$, the $\q$-Schur algebra $\mathcal{S}_{m}$ can be naturally embedded into $\mathcal{S}_n$ and we thus have the restriction and induction functors, denoted by $\Res_{\mathcal{S}_{m}}^{\mathcal{S}_n} (-)$ and $\Ind_{\mathcal{S}_{m}}^{\mathcal{S}_n} (-)$, between the two module categories.  The effect of these functors on the image of the Weyl modules in the Grothendieck group can be easily described when $m = n-1$:

\begin{thm}
Let $\lambda \in \mathcal{P}_{n-1}$ and $\mu \in \mathcal{P}_n$.  Then
\begin{align*}
[\Ind_{\mathcal{S}_{n-1}}^{\mathcal{S}_n} (\Delta^{\lambda})] &= \sum_{\rho} [\Delta^{\rho}], \\
[\Res_{\mathcal{S}_{n-1}}^{\mathcal{S}_n} (\Delta^{\mu})] &= \sum_{\tau} [\Delta^{\tau}],
\end{align*}
where the sums run over all partitions $\rho$ obtained from the abacus display of $\lambda$ by moving a bead to its vacant succeeding position, and over all partitions $\tau$ obtained from the abacus display of $\mu$ by moving a bead to its vacant preceding position respectively.
\end{thm}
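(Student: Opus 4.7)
The first step is to translate the abacus-bead combinatorics into box operations on Young diagrams.  If $\beta_j = a$ is the $j$-th $\beta$-number of $\lambda$ in the abacus display with $s$ beads, then sliding the bead at position $a$ to the vacant position $a+1$ changes $\beta_j$ to $a+1$, which replaces $\lambda_j$ by $\lambda_j+1$: this is precisely the operation of adding a box to row $j$ of $\lambda$, and the vacancy condition is exactly what is needed for the result to remain a partition.  Sliding a bead to a vacant preceding position likewise removes a box.  Thus the theorem is equivalent to the classical branching rule: $[\Ind_{\mathcal{S}_{n-1}}^{\mathcal{S}_n}\Delta^\lambda]$ (resp.\ $[\Res_{\mathcal{S}_{n-1}}^{\mathcal{S}_n}\Delta^\mu]$) is the sum of $[\Delta^\rho]$ over all partitions $\rho$ obtained from $\lambda$ (resp.\ $\mu$) by adding (resp.\ removing) a single box.

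To prove the restriction formula, I would invoke the Dipper--James Schur functor $F$ relating $\mathcal{S}_n$-modules to modules over the Hecke algebra $\mathcal{H}_n$, which sends $\Delta^\lambda$ to the Specht module $S^\lambda$.  The embedding $\mathcal{S}_{n-1}\hookrightarrow \mathcal{S}_n$ can be chosen so as to be compatible with the parabolic inclusion $\mathcal{H}_{n-1}\subset \mathcal{H}_n$ generated by $T_1,\dots,T_{n-2}$, in the sense that $F$ intertwines the two restriction functors.  The classical branching theorem for Specht modules then supplies the analogous decomposition on the Hecke side, and this lifts back to $\mathcal{S}_n$ since $F$ detects composition multiplicities of Weyl-filtered modules.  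Equivalently, one may argue at the level of formal characters: $\Delta^\lambda$ has character the Schur function $s_\lambda$, and restriction in one fewer variable is exactly the sum of Schur functions indexed by one-box removals.

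The induction formula then follows by Frobenius reciprocity.  Both $\Ind$ and $\Res$ preserve the class of modules with a Weyl filtration (a standard consequence of the quasi-hereditary structure of $\mathcal{S}_n$ together with its natural tower structure), and one has $(\Ind_{\mathcal{S}_{n-1}}^{\mathcal{S}_n}\Delta^\lambda : \Delta^\rho) = (\Res_{\mathcal{S}_{n-1}}^{\mathcal{S}_n}\Delta^\rho : \Delta^\lambda)$ as multiplicities in the respective Weyl filtrations; by the restriction formula already proved, the right-hand side equals $1$ exactly when $\rho$ is obtained from $\lambda$ by adding a box, and $0$ otherwise.

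The main technical point is verifying that the ``natural embedding'' $\mathcal{S}_{n-1}\hookrightarrow \mathcal{S}_n$ is indeed compatible with the Schur functors.  Concretely this requires unpacking the realisation of the $\q$-Schur algebra as $\End_{\mathcal{H}_n}\bigl(\bigoplus_\mu M^\mu\bigr)$ and identifying $\mathcal{S}_{n-1}$ with the subalgebra indexed by compositions of $n-1$ padded by a trailing zero, so that the inclusion respects the obvious $\mathcal{H}_{n-1}$-action.  Once this compatibility is in place, the theorem reduces to the Hecke-algebra branching theorem, which is classical (see, for example, Mathas's book, Chapter 4).
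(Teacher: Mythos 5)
The paper does not prove this statement at all: it is recorded as a known branching rule (it goes back to Dipper--James and Green; see also Chapter~4 of Mathas's book cited as [Ma]), so any comparison is with the standard literature rather than with an argument in the text. Your abacus-to-box translation is correct: increasing $\beta_j$ by $1$ increases $\lambda_j$ by $1$, and vacancy of the succeeding position is exactly the condition $\lambda_{j-1}\geq\lambda_j+1$, so the theorem is indeed the one-box branching rule. Your Frobenius-reciprocity derivation of the induction formula from the restriction formula is also the standard one, granted that $\Ind$ and $\Res$ are exact, biadjoint up to duality, and preserve the categories of $\Delta$- and $\nabla$-filtered modules.

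The one step I would push back on is the claim that the Hecke-algebra branching theorem \emph{lifts back} to $\mathcal{S}_n$ because the Schur functor ``detects composition multiplicities of Weyl-filtered modules.'' The Schur functor is not injective on Grothendieck groups (it kills the simples $L^\mu$ with $\mu$ not $e$-regular), so the classes $[S^\tau]$ are linearly dependent in the Grothendieck group of $\mathcal{H}_{n-1}$ and knowing $[F(\Res\Delta^\mu)]=\sum_\tau[S^\tau]$ does not determine $[\Res\Delta^\mu]$; moreover, well-definedness of Specht-filtration multiplicities on the Hecke side fails in general (notably for $e=2$, which this theorem must cover since the paper only imposes $p\neq 2$ later). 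Your alternative, the formal-character argument, is the route that actually closes the argument: the character of $\Delta^\lambda$ is the Schur function $s_\lambda$, the weight spaces of $\Res\Delta^\mu$ are read off from the coefficient of $x_n^1$ via the Pieri rule, and since the characters $s_\tau$ are linearly independent they determine the class of $\Res\Delta^\mu$ in the Grothendieck group of $\mathcal{S}_{n-1}$. I would make that the primary argument and drop, or substantially qualify, the lifting-through-the-Schur-functor step.
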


When $B$ is a block of $\mathcal{S}_n$, $C$ is a block of $\mathcal{S}_{m}$, $M$ is a $\mathcal{S}_n$-module and $N$ is a $\mathcal{S}_m$-module, we write $M \down_{C}$ for the projection of $\Res_{\mathcal{S}_m}^{\mathcal{S}_n} (M)$ onto $C$, and $N \up^{B}$ for the projection of $\Ind_{\mathcal{S}_m}^{\mathcal{S}_n} (N)$ onto $B$.

\subsection{$[w:k]$-pairs}

Let $B$ be a block of $\mathcal{S}_n$, with $e$-core $\kappa$ and weight $w$.  Suppose that in an abacus display of $\kappa_B$, runner $i$ has $k$ beads more than runner $(i-1)$ for some $1 \leq i < e$.  Let $C$ be the weight $w$ block of $\mathcal{S}_{n-k}$ whose $e$-core $\kappa_C$ can be obtained from the abacus display of $\kappa_B$ by interchanging runners $(i-1)$ and $i$.  The blocks $B$ and $C$ are said to form a $[w:k]$-pair.

Every partition in $B$ has at least $k$ normal  beads on runner $i$, while every partition in $C$ has at least $k$ conormal beads on runner $(i-1)$ (for the definition of normal and conormal beads, see, for example, \cite[Section 1.1.2]{F1}).  Given a partition $\lambda$ in $B$, let $\Phi(\lambda) = \Phi_{B,C}(\lambda)$ denote the partition in $C$ obtained from $\lambda$ by moving the $k$ topmost normal beads on runner $i$ to their respective preceding positions on runner $(i-1)$.  Then $\Phi$ is a bijection from the set of partitions in $B$ to the set of partitions in $C$.  Furthermore, We have the following:

\begin{thm}[\cite{Bru}] \label{T:Bru}
Let $\lambda$ be a partition in $B$. Then
\begin{enumerate}
\item $\lambda$ is $e$-regular if and only if $\Phi(\lambda)$ is $e$-regular;
\item $\soc(L^\lambda \down_C) = (L^{\Phi(\lambda)})^{\oplus k!}$;
\item $\soc(L^{\Phi(\lambda)} \up^B) = (L^\lambda)^{\oplus k!}$;
\item $\lambda$ has exactly $k$ normal beads on runner $i$ if and only if $\Phi(\lambda)$ has exactly $k$ conormal beads on runner $(i-1)$, in which case, $L^\lambda \down_C = (L^{\Phi(\lambda)})^{\oplus k!}$ and $L^{\Phi(\lambda)} \up^B = (L^\lambda)^{\oplus k!}$.
\end{enumerate}
\end{thm}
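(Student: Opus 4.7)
The plan is to interpret bead-movement between runners $i$ and $(i-1)$ as the Kashiwara crystal operators $\tilde{e}_i$ and $\tilde{f}_i$ on the $\widehat{\mathfrak{sl}}_e$-crystal of partitions, and to categorify these operators using restriction and induction functors refined by residue. Since the $e$-core $\kappa_C$ is obtained from $\kappa_B$ by moving exactly $k$ beads from runner $i$ to runner $(i-1)$, any partition in $C$ is reached from a partition in $B$ by a sequence of $k$ such moves; on the crystal this corresponds to $\tilde{e}_i^k$, with the preferred choice $\Phi(\lambda)$ given by moving the $k$ topmost normal beads.

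First I would decompose $\Res_{\mathcal{S}_{n-1}}^{\mathcal{S}_n} = \bigoplus_{j \in \mathbb{Z}/e\mathbb{Z}} \Res_j$, where $\Res_j$ is the exact summand that sends a Weyl module $\Delta^\mu$ to the sum over partitions obtained from $\mu$ by moving a bead from runner $j$ to runner $(j-1)$. By tracking $e$-cores one sees that $(-)\down_C$ agrees, on objects in $B$, with the block-$C$ component of $\Res_i^{\circ k}$; a symmetric description holds for $(-)\up^B$ in terms of $\Ind_i^{\circ k}$, and the pair $(\Res_i,\Ind_i)$ is biadjoint up to a grading shift.

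Next I would invoke the Kleshchev--Grojnowski--Brundan modular branching rules, as established for $\q$-Schur algebras in \cite{Bru}: in the case $k=1$, $\soc(\Res_i L^\mu) = L^{\tilde{e}_i(\mu)}$ when $\mu$ has at least one normal bead on runner $i$, and zero otherwise, where $\tilde{e}_i(\mu)$ is obtained by moving the topmost normal $i$-bead to its preceding vacancy. Iterating this $k$ times and accounting for the $\mathfrak{S}_k$-action on $\Res_i^{\circ k}$ (which categorifies the divided power $\tilde{e}_i^{(k)}$), the socle of $L^\lambda \down_C$ picks up the multiplicity factor $k!$ and equals $(L^{\Phi(\lambda)})^{\oplus k!}$, giving (2). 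Part (3) follows by the dual statement for $\Ind_i$ and adjointness. Part (1) is then a combinatorial check on the abacus: the $e$-regularity condition is equivalent to the non-existence of a configuration of $e$ beads in a certain staircase pattern, and this condition is preserved by sliding the $k$ topmost normal $i$-beads one position to the left.

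Finally, for (4), the hypothesis that $\lambda$ has exactly $k$ normal beads on runner $i$ forces $\tilde{e}_i^{k+1}(\lambda) = 0$, so $\Res_i L^{\Phi(\lambda)}$ lies in blocks other than $C$ (equivalently, the block-$C$ component of $\Res_i^{\circ(k+1)}L^\lambda$ vanishes); this obstructs any non-semisimple extension in $L^\lambda \down_C$, forcing it to coincide with its socle $(L^{\Phi(\lambda)})^{\oplus k!}$. The conormal count on $\Phi(\lambda)$ is then dual, yielding $L^{\Phi(\lambda)} \up^B = (L^\lambda)^{\oplus k!}$. I expect the main obstacle to be the precise bookkeeping of the $k!$-multiplicity coming from divided-power categorification, together with the care needed to transport Brundan's branching theorems (originally cast for Hecke algebras) to the $\q$-Schur setting, where one works with Weyl modules and their simple quotients rather than only with simples over a Hecke algebra.
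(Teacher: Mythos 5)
The paper offers no proof of this statement: it is quoted verbatim from Brundan's modular branching paper \cite{Bru}, and your sketch --- residue-refined restriction and induction, the Kleshchev--Brundan socle branching rule for the crystal operators $\tilde e_i$, and the divided-power decomposition $\Res_i^{\circ k}\cong(\Res_i^{(k)})^{\oplus k!}$ producing the $k!$ --- is precisely the machinery of that reference, so you are reconstructing the cited proof rather than offering an alternative. The only thin spot is part (4), where semisimplicity of $L^\lambda\down_C$ when $\varepsilon_i(\lambda)=k$ does not follow merely from $\tilde e_i^{k+1}(\lambda)=0$ ``obstructing extensions''; one needs the multiplicity-one computation $[\Res_i^{(k)}L^\lambda:L^{\Phi(\lambda)}]=1$ combined with the simple-socle and simple-head statements, which is exactly what \cite{Bru} supplies.
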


Note that for a partition in $B$, the following statements are equivalent:
\begin{itemize}
\item It has exactly $k$ beads on runner $i$ whose respective preceding positions are vacant.
\item The respective succeeding positions of its beads on runner $(i-1)$ are all occupied.
\end{itemize}
When these statements hold, the effect of $\Phi$ is merely to interchange the runners $(i-1)$ and $i$.  These statements hold for all partitions in $B$ if and only if $w \leq k$.

Following Fayers \cite{Fayers}, we say that $B$ and $C$ are {\em Scopes equivalent} when $w \leq k$, and further extend Scopes equivalence reflexively and transitively to an equivalence relation on the set of weight $w$ blocks of $\q$-Schur algebras.

\subsection{The Fock space representation of $U_v(\widehat{\mathfrak{sl}}_e)$}

The Fock space representation $\mathcal{F}$ of $U_v(\widehat{\mathfrak{sl}}_e)$ has a basis $\{s(\lambda) \mid \lambda \in \mathcal{P} \}$ as a vector space over $\mathbb{C}(v)$.  The canonical bases $\{G(\lambda) \mid \lambda \in \mathcal{P} \}$ and $\{G^-(\lambda) \mid \lambda \in \mathcal{P} \}$ of $\mathcal{F}$ can be characterised as follows:
\begin{alignat*}{2}
G(\lambda) - s(\lambda) &\in \bigoplus_{\mu \in \mathcal{P}} v\mathbb{Z}[v] s(\mu);
\qquad \quad &G^-(\lambda) - s(\lambda) & \in \bigoplus_{\mu \in \mathcal{P}} v^{-1}\mathbb{Z}[v^{-1}] s(\mu); \\
\overline{G(\lambda)} &= G(\lambda); & \overline{G^-(\lambda)} &= G^-(\lambda).
\end{alignat*}
Here, $x \mapsto \overline{x}$ is the involution on $\mathcal{F}$ defined by Leclerc and Thibon in \cite{LT}.

Let $\left< - , - \right>$ be the inner product on $\mathcal{F}$ with respect to which $\{s(\lambda) \mid \lambda \in \mathcal{P} \}$ is orthonormal.  Define $d_{\lambda\mu}(v)$ and $e_{\lambda\mu}(v)$ as follows:
$$
d_{\lambda\mu}(v) = \left< G(\mu),s(\lambda) \right>, \qquad
e_{\lambda\mu}(-v^{-1}) = \left< G^-(\lambda),s(\mu) \right>.
$$

There are occasions where we need to consider $d_{\lambda\mu}(v)$ arising from the Fock space representations of both $U_v(\widehat{\mathfrak{sl}}_e)$ and $U_v(\widehat{\mathfrak{sl}}_{e'})$, and when these happen, we shall write $d_{\lambda\mu}^e(v)$ and $d_{\lambda\mu}^{e'}(v)$ as appropriate.

We collate together some well-known properties of $d_{\lambda\mu}(v)$ and $e_{\lambda\mu}(v)$.

\begin{thm} \label{T:vdecomp} \hfill
\begin{enumerate}
\item $d_{\mu\mu}(v) = 1 = e_{\mu\mu}(v)$;
\item $d_{\lambda\mu}(v), e_{\lambda\mu}(v) \in v\mathbb{N}_0[v]$ if $\lambda \ne \mu$;
\item $\sum_{\nu} d_{\lambda\nu}(v)e_{\nu',\mu'}(-v) = \delta_{\lambda\mu} = \sum_{\nu} e_{\lambda'\nu'}(-v)d_{\nu\mu}(v)$;
\item $d^e_{\lambda\mu}(1) = d^{e,0}_{\lambda\mu}$;
\end{enumerate}
\end{thm}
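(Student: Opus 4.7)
The plan is to treat each of the four parts in turn. Part (1) reads off immediately from the defining characterisation: from $G(\mu) - s(\mu) \in \bigoplus v\mathbb{Z}[v] s(\lambda)$, taking the inner product with $s(\mu)$ gives $d_{\mu\mu}(v) = 1$; similarly $\langle G^-(\mu), s(\mu)\rangle = 1$ yields $e_{\mu\mu}(-v^{-1}) = 1$ as a Laurent polynomial, forcing $e_{\mu\mu}(v) = 1$. Part (2) is half routine: the defining characterisations themselves force $d_{\lambda\mu}(v), e_{\lambda\mu}(v) \in v\mathbb{Z}[v]$ for $\lambda \ne \mu$. The positivity (coefficients in $\mathbb{N}_0$) is deeper, and I would invoke Varagnolo--Vasserot's geometric realisation of $G(\mu)$ as the class of a simple perverse sheaf on a quiver variety -- which forces non-negativity of the transition coefficients -- together with the analogous statement for $G^-(\mu)$; alternatively, one cites the explicit Leclerc--Thibon algorithm, which by construction introduces no negative signs.

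For (3), the cleanest reformulation is the matrix identity $D(v) \cdot P \cdot E(-v) \cdot P = I$, where $P$ is the permutation matrix for partition conjugation, $D(v) = (d_{\lambda\mu}(v))$, and $E(-v) = (e_{\lambda\mu}(-v))$. This is a biorthogonality statement between the two canonical bases $\{G(\mu)\}$ and $\{G^-(\lambda)\}$, and the standard route is to invoke the involution $\omega$ on $\mathcal{F}$ constructed by Leclerc--Thibon which sends $G(\lambda)$ (up to an explicit sign) to $G^-(\lambda')$ and which intertwines the bar involution with its sign-twisted counterpart. The identity then follows by expanding both sides in the standard basis and using the orthonormality of $\{s(\nu)\}$; the second identity is obtained symmetrically, or by transposing the first.

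Part (4) is the theorem of Varagnolo--Vasserot for $\q$-Schur algebras (generalising Ariki's theorem for symmetric groups), which identifies the specialisation $d^e_{\lambda\mu}(1)$ with the characteristic zero decomposition number $d^{e,0}_{\lambda\mu}$; one simply cites it. The only substantive content in the whole theorem lies in (3), and even there the main obstacle is bookkeeping rather than mathematics: carefully tracking the sign conventions and the $\nu \mapsto \nu'$ shift in the definition $e_{\lambda\mu}(-v^{-1}) = \langle G^-(\lambda), s(\mu)\rangle$ so that the signs emerging from $\omega$, the substitution $v \mapsto -v$, and the placement of conjugates in the summation exactly reproduce the stated identity.
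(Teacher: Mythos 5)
Your proposal is correct and takes essentially the same route as the paper, whose entire proof consists of citations: (1) and (3) to Leclerc's survey \cite{L} and (2) and (4) to Varagnolo--Vasserot \cite{VV}; your added detail on how those cited results are established is consistent with the literature. One small caveat: your aside that positivity in (2) could alternatively be read off from the Leclerc--Thibon algorithm ``by construction'' is not accurate (the algorithm involves subtractions, and positivity was only established via the geometric argument of \cite{VV}), but since your primary route is the correct citation this does not affect the proof.
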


\begin{proof}
(1) follows from \cite[7.2]{L}, (2) and (4) are proved by Varagnolo and Vasserot \cite{VV}, and (3) is Theorem 12 of \cite{L}.
\end{proof}

\begin{thm} \label{T:Mull}
Let $\mu$ be an $e$-regular partition having $e$-weight $w$.
\begin{enumerate}
\item $d_{\lambda\mu}(v) = v^w d_{\lambda'm(\mu)}(v^{-1})$; in particular, $d_{m(\mu)'\mu}(v) = v^w$.
\item If $d_{\lambda\mu}(v) \ne 0$ and $\lambda \ne m(\mu)'$, then $\deg(d_{\lambda\mu}(v)) < w$.
\end{enumerate}
\end{thm}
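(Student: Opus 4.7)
My plan for part (1) is to exploit the compatibility between the conjugation involution $\lambda \mapsto \lambda'$ on partitions and the two canonical bases of $\mathcal{F}$, from which the Mullineux involution first arises. The conjugation map, suitably twisted by a sign and a power of $v$, will define a semi-linear automorphism of $\mathcal{F}$ that commutes with the bar involution and exchanges the $U_v(\widehat{\mathfrak{sl}}_e)$-action with an opposite action; by uniqueness of the canonical basis, this automorphism must send $G(\mu)$ to a scalar multiple of $G^-(m(\mu)')$. Unwinding the coefficients, and using Theorem \ref{T:vdecomp}(3) to convert $e$-coefficients back to $d$-coefficients, will yield
\begin{equation*}
d_{\lambda\mu}(v) = v^w \, d_{\lambda' m(\mu)}(v^{-1}),
\end{equation*}
where the overall $v^w$ reflects that $\mu$ differs from its $e$-core by $w$ $e$-hooks. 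Specialising to $\lambda = m(\mu)'$ and invoking $d_{m(\mu) m(\mu)}(v) = 1$ from Theorem \ref{T:vdecomp}(1) will then give $d_{m(\mu)'\mu}(v) = v^w$.

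For part (2), once the identity of (1) is in place, the argument is short. Assume $d_{\lambda\mu}(v) \neq 0$ and $\lambda \neq m(\mu)'$, so that $\lambda' \neq m(\mu)$. By Theorem \ref{T:vdecomp}(2), $d_{\lambda' m(\mu)}(v) \in v\mathbb{Z}[v]$, i.e.\ its constant term vanishes; writing $d_{\lambda' m(\mu)}(v) = \sum_{i \geq 1} a_i v^i$ with $a_i \in \mathbb{N}_0$, the formula of part (1) becomes $d_{\lambda\mu}(v) = \sum_{i \geq 1} a_i v^{w-i}$, whose degree is at most $w - 1 < w$.

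The main obstacle will be pinning down the precise relation between $G(\mu)$ and $G^-(m(\mu)')$: one has to identify the correct twist on $s(\lambda)$ (which will typically involve a sign governed by the relative $e$-sign $\sigma_e(\lambda)$), check bar-compatibility of the conjugation automorphism, and track the scalar down to exactly $v^w$. In essence this amounts to repackaging the Leclerc--Thibon description of the bar involution together with the uniqueness/triangularity characterisation of the canonical basis; no new combinatorial input is required, but the bookkeeping of signs and of the $v$-shift is where the proof is most delicate and where I expect the bulk of the work to lie.
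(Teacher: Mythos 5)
The paper does not actually prove this statement: it is quoted from Lascoux--Leclerc--Thibon, the proof being the single line ``These are Theorem 7.2 and Corollary 7.7 of \cite{LLT} respectively.'' So the comparison has to be with the argument in \cite{LLT}, which your plan is essentially attempting to reconstruct. Your derivation of (2) from (1) is complete and correct, and is exactly how Corollary 7.7 follows from Theorem 7.2 there: $\lambda \ne m(\mu)'$ gives $\lambda' \ne m(\mu)$, so $d_{\lambda'm(\mu)}(v) \in v\mathbb{N}_0[v]$ by Theorem \ref{T:vdecomp}(1,2), and substituting $v \mapsto v^{-1}$ and multiplying by $v^w$ caps the degree at $w-1$. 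No complaints about that half.

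For (1), however, what you have written is a plan whose central step is precisely the point that cannot be waved through. You say that ``by uniqueness of the canonical basis, this automorphism must send $G(\mu)$ to a scalar multiple of $G^-(m(\mu)')$,'' and you classify the remaining work as ``bookkeeping of signs and of the $v$-shift.'' Uniqueness and bar-compatibility can at best show that a suitably twisted conjugation permutes the canonical basis elements up to relabelling and scalars, hence induces \emph{some} bijection $\mu \mapsto \nu_\mu$ on labels. They cannot identify $\nu_\mu$ with $m(\mu)'$, where $m$ is Mullineux's combinatorial algorithm from \cite{M} --- that identification is the Mullineux conjecture in its crystal/canonical-basis form and rests on the Ford--Kleshchev theorem (equivalently, Kleshchev's modular branching description of $m$ and its realisation on the crystal of the basic representation), a substantial external input your outline never invokes. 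There is also a near-circularity to be broken: to read off which $G^-(\nu)$ arises, you must locate the $\lambda$ whose twisted coefficient supplies the leading term, and that is essentially the knowledge that $d_{m(\mu)'\mu}(v) = v^w$ while all other $d_{\lambda\mu}(v)$ have degree below $w$ --- i.e.\ the ``in particular'' of (1) together with (2), the statements being proved. So the approach is the right one in spirit, but as it stands part (1) has a genuine gap that is not closable by sign-tracking alone.
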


\begin{proof}
These are Theorem 7.2 and Corollary 7.7 of \cite{LLT} respectively.
\end{proof}

 Given an $e$-abacus display of a partition $\lambda$, we can insert a new runner, whose topmost $k$ positions are occupied while the remainder are vacant, either between two consecutive runners, or to the left of runner $0$, or to the right of runner $(e-1)$, and obtain the $(e+1)$-abacus display of a new partition $\widehat{\lambda}$.  The runner which we insert is said to be {\em empty} (relative to $\lambda$) if the topmost $k$ positions in each runner of the abacus display of $\lambda$ are all occupied, and {\em full} (relative to $\lambda$) if all the beads in each runner of the abacus display of $\lambda$ occur in the topmost $k$ positions.  We note that if $\widehat{\lambda}$ is obtained from $\lambda$ by inserting an empty runner, then $\widehat{\lambda}$ is always $(e+1)$-regular, while if $\widehat{\lambda}$ is obtained from $\lambda$ by inserting a full runner, then $\widehat{\lambda}$ is $(e+1)$-regular if and only if $\lambda$ is $e$-regular.

James and Mathas related $d_{\lambda\mu}(v)$ arising from different Fock spaces by showing that $d_{\lambda\mu}(v)$ remains invariant under the insertion of empty runners.

\begin{thm}[{\cite[Theorem 4.5]{JM}}] \label{T:equating}
Let $\lambda$ and $\mu$ be partitions having the same $e$-weight and the same $e$-core, and display them on an $e$-abacus with $t$ beads, for some large enough $t$.  Let $i$ ($0 \leq i \leq e$) be a fixed integer, and let $\lambda^+$ and $\mu^+$ be the partitions obtained by inserting the same runner, which is empty relative to both $\lambda$ and $\mu$, between runners $(i-1)$ and $i$ of the $e$-abacus display of $\lambda$ (resp.\ $\mu$).  Then
$$
d^e_{\lambda\mu}(v) = d^{e+1}_{\lambda^+\mu^+}(v).
$$
\end{thm}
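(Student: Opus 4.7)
The plan is to apply the uniqueness characterization of the canonical basis. Write $\iota$ for the runner-insertion map sending a partition $\lambda$ (displayed on an $e$-abacus with $t$ beads) to the partition $\lambda^+$ whose $(e+1)$-abacus display is obtained by inserting the prescribed empty runner between runners $(i-1)$ and $i$. First I would verify that $\iota$ defines an injection from the partitions of $e$-weight $w$ with $e$-core $\kappa$ into the partitions of $(e+1)$-weight $w$ with $(e+1)$-core $\kappa^+$ (where $\kappa^+$ is obtained from $\kappa$ by the same insertion), and that it is order-preserving with respect to dominance. The key point for the latter is that the inserted runner remains empty in every $\iota(\lambda)$, so comparing cumulative bead counts on the two abacus displays is the same computation before and after insertion.

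Denote by $\mathcal{F}'$ the Fock space for $U_v(\widehat{\mathfrak{sl}}_{e+1})$ and consider
$$
H \;=\; \sum_{\nu \in \mathcal{P}} d^e_{\nu\mu}(v)\, s(\iota(\nu)) \;\in\; \mathcal{F}'.
$$
By Theorem~\ref{T:vdecomp}(1),(2) we have $d^e_{\mu\mu}(v) = 1$ and $d^e_{\nu\mu}(v) \in v\mathbb{Z}[v]$ for $\nu \neq \mu$, and by dominance preservation only partitions $\tau \domeq \mu^+$ occur among the basis vectors appearing in $H$. Hence $H - s(\mu^+) \in \bigoplus_{\tau \neq \mu^+} v\mathbb{Z}[v]\, s(\tau)$, matching the triangularity condition that characterises $G(\mu^+)$ in $\mathcal{F}'$. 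By uniqueness of the canonical basis, it suffices to prove $\overline{H} = H$; this identification then yields $d^{e+1}_{\lambda^+\mu^+}(v) = d^e_{\lambda\mu}(v)$ by reading off the coefficient of $s(\lambda^+)$.

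For the bar-invariance I would invoke the Leclerc--Thibon recursion, which builds $G(\mu)$ from $s(\kappa)$ by applying a sequence of divided Chevalley operators $f^{(k_s)}_{j_s}\cdots f^{(k_1)}_{j_1}$ indexed by the successive $e$-ladders of $\mu$, with intermediate subtractions of $v$-multiples of lower-dominance canonical basis elements to restore bar-invariance after each step. Insertion of an empty runner induces a residue-relabelling bijection $\sigma$ between the residues appearing in partitions in the block of $\kappa$ and the residues appearing in partitions in the block of $\kappa^+$; crucially, the residue of the inserted runner itself never arises in the recursion, since no bead is ever added to or removed from that runner in any $\iota(\lambda)$. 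Translating the recursion through $\sigma$, the sequence $f^{(k_s)}_{\sigma(j_s)}\cdots f^{(k_1)}_{\sigma(j_1)}$ applied to $s(\kappa^+)$ in $\mathcal{F}'$ produces precisely $H$, with identical quantum-integer coefficients at each step because the relevant counts of addable and removable nodes on each residue agree under $\iota$. Bar-invariance of $H$ then follows from bar-invariance of $s(\kappa^+)$ and of the divided powers.

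The main obstacle is the inductive bookkeeping in this last step: one must simultaneously verify that every lower-dominance correction of the form $-\,p(v)\,G(\lambda)$ subtracted in the $\mathcal{F}$-computation corresponds under $\iota$ to the correction $-\,p(v)\,G(\iota(\lambda))$ in $\mathcal{F}'$. This requires an induction on weight, using the equality of residue-wise addable and removable bead counts on corresponding partitions, together with the stability of the image of $\iota$ under every bead move within the block that arises in the recursion. Once this compatibility is established the proof is complete.
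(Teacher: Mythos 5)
First, note that the paper does not prove Theorem~\ref{T:equating} at all: it is quoted directly from James and Mathas \cite[Theorem 4.5]{JM}, so there is no internal proof to compare against, and your proposal has to be judged on its own terms.

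There is a decisive gap at the very first reduction. You set $H=\sum_{\nu}d^{e}_{\nu\mu}(v)\,s(\iota(\nu))$ and propose to identify $H$ with $G(\mu^{+})$ by checking the triangularity condition and bar-invariance. But $H$ is by construction supported only on basis vectors $s(\tau)$ with $\tau$ in the image of the insertion map $\iota$, whereas $G(\mu^{+})$ in general is not: the block of $\mu^{+}$ contains strictly more partitions than the block of $\mu$ (the count is the number of $(e+1)$-tuples of partitions of total size $w$ versus the number of $e$-tuples), because one may also slide beads down the \emph{inserted} runner. A minimal example: take $e=2$, $\mu=(1,1)$ displayed with $t=2$ beads (so $e$-core $\varnothing$, weight $1$), and insert a beadless runner between runners $0$ and $1$. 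Then $\mu^{+}=(2,2)$, the block of $\mu^{+}$ is $\{(4),(2,2),(1^{4})\}$, and the image of $\iota$ is $\{(4),(2,2)\}$; yet $G((2,2))=s((2,2))+v\,s((1^{4}))$ in the $\widehat{\mathfrak{sl}}_{3}$ Fock space. So $H=s((2,2))\neq G(\mu^{+})$, and in particular $\overline{H}\neq H$ --- the bar-invariance you set out to prove is simply false, and no amount of bookkeeping in the LLT recursion can rescue it. The theorem only asserts that the coefficients of $G(\mu^{+})$ \emph{at basis vectors in the image of $\iota$} agree with those of $G(\mu)$; it neither claims nor implies that the coefficients outside the image vanish. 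The same example refutes the supporting claim that ``no bead is ever added to or removed from that runner,'' and more generally the assertion that the operator sequence computing $G(\mu^{+})$ is a mere residue-relabelling of the one for $G(\mu)$ with identical quantum-integer coefficients: the inserted runner creates new addable and removable positions, so the ladder data and divided-power exponents for $\mu^{+}$ genuinely differ from those for $\mu$. A correct argument along these lines (essentially what James and Mathas do) must run the algorithm for $\mu^{+}$ itself and show inductively that the coefficients, \emph{restricted} to the image of $\iota$, reproduce those of $G(\mu)$, while controlling rather than suppressing the terms outside the image.
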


We need a result dual to Theorem \ref{T:equating} involving the insertion of a full runner.

\begin{thm} \label{T:full}
Let $\lambda$ and $\mu$ be partitions having the same $e$-weight and the same $e$-core, and let $\widehat{\lambda}$ and $\widehat{\mu}$ be the partitions obtained from $\lambda$ and $\mu$ respectively by inserting the same runner, which is full relative to both $\lambda$ and $\mu$.  Then
$$
d^e_{\lambda\mu}(v) = d^{e+1}_{\widehat{\lambda}\widehat{\mu}}(v).
$$
\end{thm}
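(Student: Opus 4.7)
The plan is to reduce Theorem~\ref{T:full} to Theorem~\ref{T:equating} via partition conjugation. Conjugating a partition rotates its abacus display by $\pi$ and interchanges beads with vacancies, so a full runner relative to $\lambda$ corresponds to an empty runner relative to $\lambda'$. Concretely, if $\widehat{\lambda}$ is obtained from $\lambda$ by inserting a full runner at a given position, then $\widehat{\lambda}'=(\lambda')^+$ is obtained from $\lambda'$ by inserting an empty runner at the mirror position, and similarly $\widehat{\mu}'=(\mu')^+$. Since $\lambda'$ and $\mu'$ share the same $e$-core and $e$-weight as $\lambda$ and $\mu$, applying Theorem~\ref{T:equating} directly to this conjugate pair yields
\[
d^e_{\lambda'\mu'}(v)\;=\;d^{e+1}_{(\lambda')^+(\mu')^+}(v)\;=\;d^{e+1}_{\widehat{\lambda}'\widehat{\mu}'}(v).
\]

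The remaining task is to transfer this equality for conjugate indices to the desired equality for the original indices. The natural bridge is the matrix-inversion identity of Theorem~\ref{T:vdecomp}(3), which on any weight-$w$ block makes $(d^e_{\lambda\nu}(v))$ invertible with inverse $(e^e_{\nu'\mu'}(-v))$. This reformulates Theorem~\ref{T:full} as an empty-runner invariance for the second canonical basis,
\[
e^e_{\alpha\beta}(v)\;=\;e^{e+1}_{\alpha^+\beta^+}(v),
\]
which I would establish either by running the James--Mathas argument with the characterising conditions of $G^-(\lambda)$ in place of those of $G(\lambda)$, or by exploiting the Leclerc--Thibon involution $s(\lambda)\mapsto(-v)^{|\lambda|}s(\lambda')$, which interchanges the two canonical bases up to signs and powers of $v$. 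Once this $e$-variant is available, matrix inversion on both sides under the bijection $\lambda\leftrightarrow\widehat{\lambda}$ gives the theorem.

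The main obstacle is a bookkeeping issue that arises in any such reduction: the $(e+1)$-block $\hat{B}$ containing $\widehat{\lambda}$ is typically strictly larger than $\{\widehat{\lambda}:\lambda\in B\}$, since one may add further $(e+1)$-hooks on the newly inserted runner to produce partitions in $\hat{B}$ outside the image of $\widehat{\;}$. To close the argument, one must verify that these extra partitions do not disturb the inversion; this should follow from Lemma~\ref{L:basic}(1) combined with a dominance-order analysis of full-runner insertion, showing that the rows and columns of the $(e+1)$-matrix indexed by the extra partitions decouple sufficiently from those indexed by the image. This combinatorial decoupling, together with the $e$-variant of Theorem~\ref{T:equating}, constitutes the technically delicate heart of the argument.
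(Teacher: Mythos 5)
Your opening observation is the same as the paper's: conjugation turns a full runner into an empty one, so $\widehat{\lambda}'=(\lambda')^+$ and Theorem \ref{T:equating} gives $d^e_{\lambda'\mu'}(v)=d^{e+1}_{\widehat{\lambda}'\widehat{\mu}'}(v)$. But the route you take from there has a genuine gap at exactly the point you flag as ``technically delicate.'' The inversion identity of Theorem \ref{T:vdecomp}(3) is $\sum_{\nu}d_{\lambda\nu}(v)\,e_{\nu'\mu'}(-v)=\delta_{\lambda\mu}$, where $\nu$ runs over the \emph{entire} $(e+1)$-block $\widehat{B}$ of $\widehat{\lambda}$. To deduce the theorem from an empty-runner invariance of the $e_{\alpha\beta}(v)$ you would need, for every $\widehat{\nu}\in\widehat{B}$ outside the image of the hat map, that $d^{e+1}_{\widehat{\lambda}\widehat{\nu}}(v)\,e^{e+1}_{\widehat{\nu}'\widehat{\mu}'}(-v)=0$. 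Nothing in Lemma \ref{L:basic}(1) or in dominance considerations delivers this: the extra partitions of $\widehat{B}$ (those carrying $(e+1)$-hooks on the inserted runner) do in general occur as composition factors of the $\Delta^{\widehat{\lambda}}$, so the rows and columns do not decouple, and the restricted matrix $(d^{e+1}_{\widehat{\lambda}\widehat{\nu}}(v))_{\lambda,\nu\in B}$ is not inverted by the corresponding restricted $e$-matrix. Moreover the auxiliary statement $e^e_{\alpha\beta}(v)=e^{e+1}_{\alpha^+\beta^+}(v)$ is itself an unproved runner-removal theorem of comparable depth to \ref{T:equating}; neither rerunning James--Mathas for $G^-$ nor invoking the Leclerc--Thibon involution (which is essentially how \ref{T:vdecomp}(3) arises in the first place) is a routine step.

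The paper avoids all of this by never inverting a matrix. The key tool is Theorem \ref{T:Mull}(1), $d^e_{\lambda\mu}(v)=v^w d^e_{\lambda'm(\mu)}(v^{-1})$, which converts conjugation of \emph{both} indices into conjugation of the first index and the Mullineux map on the second, entry by entry. For $\mu$ $e$-regular one then applies \ref{T:equating} to the pair $(\lambda',m(\mu))$ (checking the inserted runner is empty relative to $m(\mu)$ because $\mu'\trianglelefteq m(\mu)$), applies \ref{T:Mull}(1) again in the $(e+1)$-setting, and identifies $\widehat{\mu}=m(m(\mu)^+)$ from the equation $d^{e+1}_{(\mu')^+m(\mu)^+}(v)=v^w$; the general case follows by first inserting an empty runner to $e$-regularise $\mu$. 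If you want to salvage your approach you would need to either prove the decoupling claim (which I do not believe holds as stated) or replace the global inversion by a single-entry relation such as \ref{T:Mull}(1) --- at which point you have essentially reconstructed the paper's argument.
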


Theorem \ref{T:full} is the main result of \cite{F2}, where it is proved directly for the case when $\mu$ is $e$-regular, and then uses Theorem \ref{T:equating} to deal with the general case.  We provide here an independent proof that the case when $\mu$ is $e$-regular also follows from Theorems \ref{T:equating} and \ref{T:Mull}.  Our proof also provides an important corollary which we shall require.

\begin{proof}
Recall that to obtain an abacus display of the conjugate of a partition, one may take an abacus display of the partition, rotate it through an angle of $\pi$, and read the occupied positions as vacant and the vacant positions as occupied.  As such, inserting a full runner to a partition is equivalent to first conjugating the partition, followed by inserting an empty runner and then conjugating the resultant partition.  Thus, $\widehat{\lambda} = ((\lambda')^+)'$, and $\widehat{\mu} = ((\mu')^+)'$; here, and hereafter, when $\nu$ is a partition having the same $e$-weight and $e$-core as $\lambda'$ (or $\mu'$), $\nu^+$ denotes the partition obtained from $\nu$ by inserting a (fixed) runner which is empty relative to both $\lambda'$ and $\mu'$.

Let the $e$-weights of $\lambda$ and $\mu$ be $w$, and assume that $\mu$ is $e$-regular.  Then $\mu \trianglerighteq m(\mu)'$ by Lemma \ref{L:basic}(1,2), so that $\mu' \trianglelefteq m(\mu)$.  Thus, as the runner inserted into $\mu'$ to obtain $(\mu')^+$ is empty relative to $\mu'$, it is also empty relative to $m(\mu)$.  Hence by Theorems \ref{T:Mull}(1) and \ref{T:equating}, we have
\begin{align*}
d^e_{\lambda\mu}(v) &= v^w d^e_{\lambda'm(\mu)}(v^{-1}) \\
&= v^w d^{e+1}_{(\lambda')^+ m(\mu)^+}(v^{-1})\\
&= d^{e+1}_{((\lambda')^+)' m(m(\mu)^+)}(v) \\
&= d^{e+1}_{ \widehat{\lambda} m(m(\mu)^+)}(v),
\end{align*}
and
$$
v^w = d^e_{\mu'm(\mu)}(v) = d^{e+1}_{(\mu')^+ m(\mu)^+}(v).
$$
The latter yields $m(m(\mu)^+)' = (\mu')^+$ by Theorem \ref{T:Mull}, so that $\widehat{\mu} = ((\mu')^+)' = m(m(\mu)^+)$, and substituting this into the former yields
$$
d^e_{\lambda\mu}(v) = d^{e+1}_{\widehat{\lambda}\widehat{\mu}}(v).
$$

Now, for general $\mu$, to each of $\lambda$ and $\mu$, we can first insert an empty runner (to $e$-regularise $\mu$), then insert a full runner, and finally removing the empty runner we have inserted and obtain the following:
\begin{align*}
d^e_{\lambda \mu}(v) &= d^{e+1}_{\lambda^+\mu^+}(v) \\
&= d^{e+2}_{\widehat{\lambda^+}\widehat{\mu^+}}(v) \\
&= d^{e+1}_{\widehat{\lambda}\widehat{\mu}} (v).
\end{align*}
\end{proof}

\begin{cor}[of proof] \label{C:full}
Let $\mu$ be an $e$-regular partition, and let $\widehat{\mu}$ be the partition obtained from $\mu$ by inserting a full runner.  Then this runner is also full relative to $m(\mu)'$, and denoting the partition obtained by inserting this runner to $m(\mu)'$ as $\widehat{m(\mu)'}$, we have $\widehat{m(\mu)'} = m(\widehat{\mu})'$.
\end{cor}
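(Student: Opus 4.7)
The plan is to derive this corollary as a direct consequence of the work already carried out in the proof of Theorem \ref{T:full}, specifically the identity $\widehat{\mu} = m(m(\mu)^+)$ established there, together with the conjugation trick $\widehat{\nu} = ((\nu')^+)'$ that converts full-runner insertion into empty-runner insertion.

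First I would establish that the same full runner is full relative to $m(\mu)'$. Conjugating turns this into the claim that the corresponding empty runner is empty relative to $m(\mu)$. Since the runner is full relative to $\mu$, its conjugate empty runner is empty relative to $\mu'$; and Lemma \ref{L:basic}(1),(2) gives $\mu \trianglerighteq m(\mu)'$, hence $\mu' \trianglelefteq m(\mu)$. It therefore suffices to record the monotonicity lemma used implicitly in the proof of Theorem \ref{T:full}: if $\sigma \trianglelefteq \tau$ lie in the same block and the fixed empty runner (with $k$ beads at the top) is empty relative to $\sigma$, then it is empty relative to $\tau$. This is a short $\beta$-number computation on an abacus with $t \gg 0$ beads: emptiness of the runner relative to $\sigma$ forces the bottom $N := ke$ $\beta$-numbers of $\sigma$ to be precisely $\{0,1,\dotsc,N-1\}$, summing to $\binom{N}{2}$; dominance then forces the top $(t-N)$ $\beta$-numbers of $\tau$ to sum to at least those of $\sigma$, so the bottom $N$ $\beta$-numbers of $\tau$ sum to at most $\binom{N}{2}$, which is the minimum achievable value for $N$ distinct non-negative integers, and the conclusion follows.

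With the first assertion in hand, the equality $\widehat{m(\mu)'} = m(\widehat{\mu})'$ is immediate. On the one hand, the conjugation trick, now justified for $m(\mu)'$, gives $\widehat{m(\mu)'} = ((m(\mu))^+)' = (m(\mu)^+)'$, where the $+$ refers to the same fixed empty runner (empty relative to $m(\mu)$). On the other hand, the identity $\widehat{\mu} = m(m(\mu)^+)$ from the proof of Theorem \ref{T:full} yields $m(\widehat{\mu}) = m(\mu)^+$ upon applying the involution $m$, whence $m(\widehat{\mu})' = (m(\mu)^+)'$; comparison of the two expressions completes the proof. I do not foresee any serious obstacle, since all the substantive content is already present in the proof of Theorem \ref{T:full}; the only genuinely new ingredient is the elementary monotonicity lemma above.
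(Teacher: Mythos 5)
Your proposal is correct and follows essentially the same route as the paper: the paper's proof of Corollary \ref{C:full} is exactly the chain $m(\widehat{\mu})' = (m(\mu)^+)' = \widehat{m(\mu)'}$, obtained by applying the involution $m$ and conjugation to the identity $\widehat{\mu} = m(m(\mu)^+)$ established in the proof of Theorem \ref{T:full}. The only difference is that you spell out the dominance-monotonicity step (empty relative to $\sigma$ and $\sigma \trianglelefteq \tau$ implies empty relative to $\tau$), which the paper uses implicitly via $\mu' \trianglelefteq m(\mu)$; your $\beta$-number verification of it is correct.
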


\begin{proof}
We have seen that $\widehat{\mu} = m(m(\mu)^+)$.  Thus
$$ m(\widehat{\mu})' = (m(\mu)^+)' = \widehat{m(\mu)'}.$$
\end{proof}

\begin{thm} \label{T:R-H}
$\frac{d}{dv} (d^e_{\lambda\mu}(v) )|_{v=1} = J^{e,0}_{\lambda\mu}$.
\end{thm}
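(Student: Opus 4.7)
My plan is to deduce the theorem from a $v$-deformed Jantzen-Schaper formula on the Fock space, in the spirit of Ryom-Hansen. The starting observation is that, in characteristic zero, the factor $(1+\nu_0(h_{\lambda\rho}))$ appearing in the definition of $J_{\lambda\mu}$ is identically $1$, so that Theorem \ref{T:vdecomp}(4) allows us to rewrite the target identity as
\[
\left.\tfrac{d}{dv}\,d^e_{\lambda\mu}(v)\right|_{v=1} \;=\; \sum_{\lambda\xrightarrow{\rho}\tau}(-1)^{l_{\lambda\rho}+l_{\tau\rho}+1}\, d^e_{\tau\mu}(1).
\]
Thus it suffices to extract the linear part at $v=1$ of $d^e_{\lambda\mu}(v)$ and match it with the Jantzen-Schaper bead-move sum.

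The key step is to establish an identity on the Fock space of the form
\[
d^e_{\lambda\mu}(v) - d^e_{\lambda\mu}(v^{-1}) \;=\; (v-v^{-1})\!\!\sum_{\lambda\xrightarrow{\rho}\tau}(-1)^{l_{\lambda\rho}+l_{\tau\rho}+1}\, g_{\lambda\rho}(v)\, d^e_{\tau\mu}(v),
\]
for suitable Laurent polynomials $g_{\lambda\rho}(v)$ with $g_{\lambda\rho}(1)=1$; this is a quantum analogue of the Jantzen-Schaper formula. Such an identity can be derived from the LLT algorithm, which expresses $G(\mu)$ as a bar-invariant correction of an iterated Chevalley action $F_{i_1}^{(a_1)}\cdots F_{i_r}^{(a_r)}\,s(\emptyset)$ on the empty partition; the non-bar-invariant error terms produced at each stage can be organised into single bead-moves $\lambda\xrightarrow{\rho}\tau$ on the $e$-abacus. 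Granted the identity, differentiating at $v=1$ and using $\frac{d}{dv}\,d^e_{\lambda\mu}(v^{-1})|_{v=1}=-\frac{d}{dv}\,d^e_{\lambda\mu}(v)|_{v=1}$ together with $(v-v^{-1})|_{v=1}=0$ and $\frac{d}{dv}(v-v^{-1})|_{v=1}=2$ yields
\[
2\,\tfrac{d}{dv}d^e_{\lambda\mu}(v)|_{v=1} \;=\; 2\sum_{\lambda\xrightarrow{\rho}\tau}(-1)^{l_{\lambda\rho}+l_{\tau\rho}+1}\, g_{\lambda\rho}(1)\, d^e_{\tau\mu}(1),
\]
which is exactly the required identity.

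The main obstacle is establishing the $v$-deformed identity itself: one must verify that the polynomial corrections produced by the LLT algorithm assemble precisely into the signed sum indexed by the Jantzen-Schaper bead-move configurations $\lambda\xrightarrow{\rho}\tau$, with the correct leg-length signs $(-1)^{l_{\lambda\rho}+l_{\tau\rho}+1}$ and with each $g_{\lambda\rho}(v)$ specialising to $1$. This combinatorial matching -- tracking how the powers of $v$ coming from the $U_v(\widehat{\mathfrak{sl}}_e)$-action on the standard basis convert into leg-length statistics -- is the heart of the argument and is essentially Ryom-Hansen's theorem; the subsequent specialisation $v\to 1$ is routine.
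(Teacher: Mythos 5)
Your plan is essentially a sketch of how one might re-prove Ryom-Hansen's theorem from the LLT algorithm, and you yourself defer the ``heart of the argument'' to ``essentially Ryom-Hansen's theorem.'' Since the paper simply cites \cite{R-H} for that part, deferring to it is legitimate --- but then your proposal contains no new content beyond the citation, and it misses the one case that the citation does not cover. Ryom-Hansen's result (and the LLT-algorithm mechanism you describe, which builds $G(\mu)$ by applying divided powers of the Chevalley generators to $s(\emptyset)$ and correcting to bar-invariance) is established only for $\mu$ $e$-regular; that is the setting of the Schaper formula for Specht modules of the Hecke algebra, where the relevant simple modules are indexed by $e$-regular partitions. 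For $e$-singular $\mu$ the canonical basis element $G(\mu)$ still exists in the Fock space, but your sketched derivation of the $v$-deformed Jantzen--Schaper identity does not apply to it, and you give no argument for this case. This is precisely the gap the paper's proof is written to fill.

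The paper's route for $e$-singular $\mu$ is: first dispose of the case $d^e_{\lambda\mu}(v)=0$ (where both sides vanish by Theorems \ref{T:vdecomp}(4) and \ref{T:Jantzen}); otherwise $\mu\domeq\lambda$, so one may insert a runner that is empty relative to both $\lambda$ and $\mu$, producing $(e+1)$-abacus partitions $\lambda^+,\mu^+$ with $\mu^+$ automatically $(e+1)$-regular. The James--Mathas theorem (Theorem \ref{T:equating}) gives $d^e_{\lambda\mu}(v)=d^{e+1}_{\lambda^+\mu^+}(v)$, so the regular case applies to the pair $(\lambda^+,\mu^+)$; one then has to check separately that $J^{e+1,0}_{\lambda^+\mu^+}=J^{e,0}_{\lambda\mu}$, which requires verifying that the bead-move configurations $\lambda\xrightarrow{\rho}\tau$ biject with those for $\lambda^+$, that the leg-length statistics $l_{\lambda\rho}, l_{\tau\rho}$ are preserved, and that $d^{e,0}_{\tau\mu}=d^{e+1,0}_{\tau^+\mu^+}$ for each $\tau$ occurring. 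None of this appears in your proposal. To repair it you would either need to carry out the runner-insertion reduction, or genuinely extend the $v$-deformed Jantzen--Schaper identity to $e$-singular $\mu$ on the Fock space --- and the latter is not something you can get from the LLT algorithm as you have described it.
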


\begin{proof}
This is proved by Ryom-Hansen \cite{R-H} for $\mu$ $e$-regular.  In addition, the Theorem holds trivially when $d^e_{\lambda\mu}(v) =0$ by Theorems \ref{T:vdecomp}(4) and \ref{T:Jantzen}.  It remains to consider the case when $\mu$ is $e$-singular and $d^e_{\lambda\mu}(v) \ne 0$.  Given a partition $\nu$ with the same $e$-core and the same $e$-weight as $\lambda$, write $\nu^+$ for the partition obtained from $\nu$ by inserting a (fixed) runner which is empty relative to $\lambda$.  Since $d^e_{\lambda\mu}(v) \ne 0$, we have $\mu \domeq \lambda$ by Theorem \ref{T:vdecomp}(2,4) and Lemma \ref{L:basic}(1), so that the runner inserted into $\mu$ to obtain $\mu^+$ is also empty relative to $\mu$.  Thus, by Theorem \ref{T:equating}, and since $\mu^+$ is $(e+1)$-regular, we have
$$
\tfrac{d}{dv} (d^e_{\lambda\mu}(v) )|_{v=1} =
\tfrac{d}{dv} (d^{e+1}_{\lambda^+\mu^+}(v) )|_{v=1} =
J_{\lambda^+\mu^+}^{e+1,0}.
$$
It remains to show that $J_{\lambda^+\mu^+}^{e+1,0} = J_{\lambda\mu}^{e,0}$.  This holds because
\begin{itemize}
\item $\lambda \xrightarrow{\rho} \tau$ if and only if $\lambda^+ \xrightarrow{\rho^+} \tau^+$;
\item if $\lambda \xrightarrow{\rho} \tau$, then
\begin{itemize}
\item[$\diamond$] $\tau \dom \lambda$ so that the runner inserted into $\tau$ to obtain $\tau^+$ is empty relative to $\tau$, and hence $d^{e,0}_{\tau\mu} = d^{e+1,0}_{\tau^+\mu^+}$ by Theorems \ref{T:equating} and \ref{T:vdecomp}(4);
\item[$\diamond$] $l_{\lambda\rho} = l_{\lambda^+\rho^+}$ and $l_{\tau\rho} = l_{\tau^+\rho^+}$;
\end{itemize}
\item $\lambda^+ \xrightarrow{\sigma} \upsilon$ only if $\sigma = \rho^+$ and $\upsilon = \tau^+$ for some $\rho$ and $\tau$.
\end{itemize}
\end{proof}

\begin{prop} \label{P:degree}
Suppose $d_{\lambda\mu}^e(v) \ne 0$.  Then $\deg(d_{\lambda\mu}^e(v)) \leq w$.
\end{prop}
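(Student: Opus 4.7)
The plan is to split into two cases according to whether $\mu$ is $e$-regular or $e$-singular. In the $e$-regular case the bound is immediate from Theorem~\ref{T:Mull}: if $d_{\lambda\mu}(v) \ne 0$ then either $\lambda = m(\mu)'$, so $d_{\lambda\mu}(v) = v^w$ has degree exactly $w$, or $\lambda \ne m(\mu)'$, in which case part~(2) gives the strict inequality $\deg(d_{\lambda\mu}(v)) < w$. Either way $\deg(d_{\lambda\mu}(v)) \le w$.

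For the $e$-singular case I would reduce to the $e$-regular setting by the runner-insertion result Theorem~\ref{T:equating}. Since $d_{\lambda\mu}(v) \ne 0$, Theorem~\ref{T:vdecomp}(2,4) and Lemma~\ref{L:basic}(1) give $\mu \domeq \lambda$, so a single new runner can be chosen that is empty relative to both $\lambda$ and $\mu$ simultaneously (I would display both partitions on an abacus with sufficiently many beads, so that on every existing runner the top $k$ positions are occupied, and then insert a runner with its top $k$ positions occupied and the rest vacant). Let $\lambda^+$ and $\mu^+$ be the resulting $(e+1)$-abacus partitions. Two facts from the earlier discussion of runner insertion make this work: (a) inserting an empty runner always produces an $(e+1)$-regular partition, so $\mu^+$ is $(e+1)$-regular; and (b) the $(e+1)$-weight of both $\lambda^+$ and $\mu^+$ is still $w$, since the inserted runner contributes only to the $(e+1)$-core. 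Theorem~\ref{T:equating} then identifies
$$d^e_{\lambda\mu}(v) \;=\; d^{e+1}_{\lambda^+\mu^+}(v),$$
and applying the $e$-regular case already proved, now at rank $(e+1)$ and to $\mu^+$ of weight $w$, yields $\deg(d^{e+1}_{\lambda^+\mu^+}(v)) \le w$.

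The only mildly delicate step is arranging the simultaneous emptiness of the inserted runner relative to both $\lambda$ and $\mu$; this is harmless because $\lambda$ and $\mu$ share an $e$-core, and enlarging the bead count uniformly by a sufficient multiple of $e$ fills the top positions of every runner in both abacus displays. Everything else is just chaining the two cited theorems with the standard observation that inserting an empty runner preserves weight and $e$-regularises any partition.
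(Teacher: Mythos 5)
Your proof is correct and follows essentially the same route as the paper's: the $e$-regular case via Theorem \ref{T:Mull}, and the $e$-singular case by inserting an empty runner (Theorem \ref{T:equating}) to reduce to the $(e+1)$-regular partition $\mu^+$ of the same weight. The extra care you take over choosing a runner empty relative to both $\lambda$ and $\mu$ is already built into the hypotheses of Theorem \ref{T:equating} (``for some large enough $t$''), so it adds rigour but nothing new.
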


\begin{proof}
When $\mu$ is $e$-regular, this follows from Theorem \ref{T:Mull}.  When $\mu$ is $e$-singular, let $\lambda^+$ and $\mu^+$ be the partitions as defined in Theorem \ref{T:equating}.  Then $d^e_{\lambda\mu}(v) = d^{e+1}_{\lambda^+\mu^+}(v)$.  Since $\mu^+$ is $(e+1)$-regular, and has the same weight as $\mu$, we see that $\deg (d^{e+1}_{\lambda^+\mu^+}(v)) \leq w$, and the Proposition follows.
\end{proof}

Theorem \ref{T:vdecomp}(2) can be strengthened as follows:

\begin{thm}[{\cite[Theorem 2.4]{T2}}] \label{T:parity}
Suppose $d_{\lambda\mu}^e(v) \ne 0$.  Then
$$
d^e_{\lambda\mu}(v) \in
\begin{cases}
\mathbb{N}_0[v^2], &\text{if }\sigma_e(\lambda) = \sigma_e(\mu); \\
v\mathbb{N}_0[v^2], &\text{otherwise.}
\end{cases}
$$
\end{thm}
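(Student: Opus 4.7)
The plan is to proceed by induction, first reducing to the case where $\mu$ is $e$-regular via Theorem \ref{T:equating}, then handling the $e$-regular case by means of the LLT crystal construction of the canonical basis. The base case of weight $w=0$ is immediate: $\lambda = \mu$ equals the common $e$-core, so $d^e_{\lambda\mu}(v) = 1 \in \mathbb{N}_0[v^2]$ while $\sigma_e(\lambda) = \sigma_e(\mu) = 1$, consistent with the asserted even-parity branch.

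For the reduction, assume $\mu$ is $e$-singular. Insert an empty runner (relative to both $\lambda$ and $\mu$) to obtain partitions $\lambda^+$ and $(e+1)$-regular $\mu^+$, so that $d^e_{\lambda\mu}(v) = d^{e+1}_{\lambda^+\mu^+}(v)$ by Theorem \ref{T:equating}. One then checks on abacus displays that the inserted runner preserves the sign ratio, namely $\sigma_e(\lambda)\sigma_e(\mu) = \sigma_{e+1}(\lambda^+)\sigma_{e+1}(\mu^+)$. Indeed, the $e$-hooks removed to reduce $\lambda$ to its $e$-core correspond naturally with the $(e+1)$-hooks removed to reduce $\lambda^+$ to its $(e+1)$-core, and because the inserted runner is empty at the top, no beads on this new runner lie strictly between the source and destination of any bead-sliding move, so the leg-lengths are identical modulo $2$; the same holds for $\mu$ versus $\mu^+$. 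Hence the parity claim transfers from the $(e+1)$-regular case.

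For $\mu$ $e$-regular, I would invoke the LLT/Kashiwara--Leclerc--Thibon construction: express $G(\mu)$ as a bar-invariant correction of a monomial $f_{i_k}^{(a_k)}\cdots f_{i_1}^{(a_1)} s(\kappa)$, where $\kappa$ is the common $e$-core. The Chevalley generators act on the standard basis by $f_i\, s(\tau) = \sum_\rho v^{N_i(\tau,\rho)} s(\rho)$, summing over partitions $\rho$ obtained from $\tau$ by adding a node of residue $i$, with $N_i(\tau,\rho)$ an explicit non-negative integer exponent. Expanding $\langle G(\mu), s(\lambda)\rangle$ in this way produces a sum of monomials in $v$, and it suffices to prove that each monomial contributing to $d^e_{\lambda\mu}(v)$ has $v$-exponent of parity $\sigma_e(\lambda)\sigma_e(\mu)$. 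This translates into verifying that the parity of the $v$-exponents accumulated along any sequence of $f_i$-applications equals the parity of the sum of leg-lengths of the reverse sequence of $e$-hook removals on the abacus.

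The main obstacle is precisely this combinatorial identification in the $e$-regular case: matching the LLT $v$-exponent $N_i(\tau,\rho)$, which counts certain signed $i$-nodes weakly below the added node, with the leg-length of the corresponding bead-sliding move in abacus terms, modulo $2$. The abacus computation is manageable when each intermediate partition is $e$-regular, but when the bar-invariant correction forces contributions from $e$-singular intermediates, one must carefully track how the parity propagates across the correction terms $\sum_\nu c_\nu(v) G(\nu)$; here Proposition \ref{P:degree} and the inductive hypothesis on smaller-weight (or strictly dominated) partitions are needed to close the argument.
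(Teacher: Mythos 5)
The paper does not actually prove this statement: it is imported verbatim from \cite{T2}, so there is no internal proof to compare your argument against, and your proposal must stand on its own. Its first half does. The reduction to the $e$-regular case is sound: $d^e_{\lambda\mu}(v)\ne 0$ forces $\mu\domeq\lambda$ (Theorem \ref{T:vdecomp}(2,4) and Lemma \ref{L:basic}(1)), so a runner empty relative to $\lambda$ is empty relative to $\mu$, and since such a runner is \emph{occupied} in its topmost $k$ rows and vacant below (you state this backwards), while every bead move in the hook-removal process has both its source and its target in rows $\geq k$, corresponding hook removals have equal leg-lengths and hence $\sigma_e(\lambda)\sigma_e(\mu)=\sigma_{e+1}(\lambda^+)\sigma_{e+1}(\mu^+)$.

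The $e$-regular case, however, is only a plan, and the step you defer as ``the main obstacle'' is the entire content of the theorem: namely that the exponents $N_i(\tau,\rho)$ in $f_i\,s(\tau)=\sum_\rho v^{N_i(\tau,\rho)}s(\rho)$ satisfy $(-1)^{N_i(\tau,\rho)}=c\,\sigma_e(\tau)\sigma_e(\rho)$ for a constant $c$ depending only on the source and target blocks. Nothing in the proposal establishes this, and without it no conclusion follows. Moreover, the induction is not set up so that it could close even granting that lemma. The correction terms in $G(\mu)=A(\mu)-\sum_\nu c_\nu(v)G(\nu)$ involve partitions $\nu$ with the \emph{same} $e$-core and $e$-weight as $\mu$, merely higher in the dominance order, so ``induction on smaller weight'' is vacuous; you must induct downwards on the order within a fixed block. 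You also need a parity statement for the coefficients of the intermediate basis element $A(\mu)$ itself (not only for the $G$'s) in order to control the parity of the bar-symmetric Laurent polynomials $c_\nu(v)$ extracted from its non-dominant part; this is an additional claim of the same combinatorial nature that must be proved, not assumed. Finally, Proposition \ref{P:degree} plays no role in closing any of these gaps, so invoking it here is spurious. As written, the proposal identifies a plausible route but does not prove the theorem.
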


\subsection{Rouquier blocks}

Consider the blocks of the $\q$-Schur algebras of a fixed weight $w$ whose $e$-cores have the following property:  on their abacus displays, either runner $i$ has at least $w$ beads more than runner $j$, or runner $j$ has at least $(w-1)$ beads more than runner $i$, for all $0 \leq i< j < e$.  These blocks, known as Rouquier blocks, form a single Scopes equivalence class, and are now well understood.   In particular, there exist closed formulas for $d_{\lambda\mu}(v)$ and $e_{\lambda\mu}(v)$ for $\mu$ lying in a Rouquier block (see \cite{LM} and \cite{T1}).  This thus gives the decomposition numbers of these blocks of $\q$-Schur algebras in characteristic $0$ upon evaluation at $v=1$.  In fact, this also gives the decomposition numbers in the `Abelian defect' case, i.e.\ the case where $p> w$ (see \cite{JLM}).

An arbitrary weight $w$ block can always be induced to a Rouquier block through a sequence of $[w:k]$-pairs.

\begin{lem} \label{L:sequence}
Suppose $A$ is a weight $w$ block of $\mathcal{S}_n$.  Then there exists a sequence $B_0, B_1,\dotsc, B_s$ of weight $w$ blocks of $\q$-Schur algebras such that $B_0 = A$, $B_s$ is Rouquier, and for each $1 \leq i \leq s$, the blocks $B_{i}$ and $B_{i-1}$ form a $[w:k_i]$-pair for some $k_i \in \mathbb{Z}^+$.
\end{lem}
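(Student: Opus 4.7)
The plan is to construct the required sequence algorithmically by iterating $[w:k]$-pair moves. Given $A$, I would first display its $e$-core $\kappa_A$ on an $e$-abacus with a sufficiently large number of beads $s$, obtaining bead counts $(b_0, \ldots, b_{e-1})$. It is straightforward to check, directly from the Rouquier condition, that $A$ is Rouquier precisely when, in some such display, the bead counts are strictly monotone in the runner order, with consecutive gaps at least $w-1$ (if increasing) or at least $w$ (if decreasing); if they were mixed up-then-down, a triple of runners would violate the condition for a non-adjacent pair. Each $[w:k]$-pair move interchanges two adjacent runners with $k=|b_{i-1}-b_i|>0$ and produces a new weight $w$ block whose $e$-core carries the correspondingly swapped bead configuration.

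The procedure then alternates two operations. In the sorting step, whenever adjacent runners $(i-1, i)$ satisfy $b_{i-1}>b_i$, I apply the $[w:b_{i-1}-b_i]$-pair to swap them; this moves to a new block and strictly reduces the number of inversions in the bead count sequence, so iterating produces a block whose bead counts are non-decreasing in runner order. If at that point all consecutive gaps satisfy $b_{i+1} - b_i \geq w-1$, the current block is Rouquier and we are done. Otherwise, I would redisplay the current block using a larger number of beads (not itself a $[w:k]$-pair move but merely a change of abacus representation of the same block), which cyclically shifts the bead counts and adds one to the entry wrapping around from runner $e-1$; this can expose new ``downward jumps'' to be resolved by further $[w:k]$-pair moves, and one returns to the sorting step.

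The main obstacle is to prove that this combined sort-and-redisplay loop terminates at a Rouquier block. I expect this to be handled by introducing a well-founded measure on weight $w$ blocks --- for instance, a lexicographic combination of the total gap deficiency $\sum_i \max(0,\,w-1-(b_{i+1}-b_i))$ computed in a canonical display, together with the inversion count --- and verifying that each full cycle of sorting plus redisplay strictly decreases it. The key combinatorial observation is that each redisplay effectively increments one bead count while leaving the block unchanged, so by iterating we can widen the deficient gaps until every pair of consecutive gaps satisfies the Rouquier threshold.
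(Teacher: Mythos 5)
Your overall strategy --- sort the bead counts of the core via the $[w:k]$-moves that swap an adjacent descent, and use re-display with extra beads to create new descents --- is the right one and is essentially how one adapts Scopes' argument (the paper itself only cites Fayers' Lemma 3.1 here). However, there is a genuine gap in the termination step, and the loop as you describe it provably fails. Starting from a sorted configuration $b_0\le\dotsb\le b_{e-1}$, adding one bead gives $(b_{e-1}+1,b_0,\dotsc,b_{e-2})$, and re-sorting returns you to $(b_0,\dotsc,b_{e-2},b_{e-1}+1)$: a full cycle of ``redisplay, then sort'' only ever increments the largest count, hence only widens the last gap, and never repairs a deficient gap further down. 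Concretely, for $e=3$, $w=2$ and bead counts $(5,5,5)$ your loop produces $(5,5,6),(5,5,7),\dotsc$, none of which is Rouquier, since the gap between the first two runners stays $0$ forever. Accordingly, your proposed lexicographic measure does not strictly decrease under each cycle, contrary to what you assert, and the ``key combinatorial observation'' that redisplay increments one bead count does not by itself let you widen an arbitrary deficient gap.

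The repair is to control \emph{which} gap the increment lands on: adding $m=qe+r$ beads ($0\le r<e$) to a sorted display and re-sorting yields $(b_0+q,\dotsc,b_{e-r-1}+q,\,b_{e-r}+q+1,\dotsc,b_{e-1}+q+1)$, so exactly the gap in position $e-r-1$ grows by $1$ and all others are unchanged. Choosing $r$ at each stage so that the increment hits a still-deficient gap, the total deficiency $\sum_i\max\bigl(0,\,w-1-(b_{i+1}-b_i)\bigr)$ drops by $1$ per cycle and the induction closes; without this choice the argument is incomplete. Two smaller points: you should note explicitly that bubble-sorting by strict descents is realized by legal moves in the required direction ($B_i$ is the larger block, obtained from $B_{i-1}$ by swapping adjacent runners where the left one has $k_i\ge1$ more beads), which is fine; and you should drop the claimed ``precisely when'' characterization of Rouquier blocks by monotone bead counts --- only the (true) sufficient direction is needed, and the converse is false, e.g.\ $(5,6,3)$ with $w=2$ satisfies the Rouquier condition for every pair of runners but is not monotone.
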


\begin{proof}
This is Lemma 3.1 of \cite{Fayers} in the context of the Iwahori-Hecke algebras, and its proof can be adapted for $\q$-Schur algebras.
\end{proof}

We call the weight $w$ block which has an abacus display in which runner $i$ has exactly $(w-1)$ beads more than runner $i-1$ for all $1 \leq i <e$ the {\em canonical Rouquier block}.  The partitions in this block are called {\em canonical Rouquier partitions}.

\section{Weight $2$ blocks}

Weight $2$ blocks of symmetric groups and Schur algebras in odd characteristic are well understood by the work of several authors (see, for example, \cite{S}, \cite{R}, \cite{CT}).  In this section, we show that many results of \cite{S} and \cite{CT} can be generalised to weight $2$ blocks of $\q$-Schur algebras, as long as the underlying characteristic is not $2$.

We begin by introducing some notations relating to weight $2$ partitions due to Richards \cite{R}.  If $\lambda$ is such a partition, we denote by $\del\lambda$ the absolute difference between the leg-lengths of the $e$-hooks removed from $\lambda$ to obtained its $e$-core.  Furthermore, if $\del\lambda = 0$, we say $\lambda$ is black if $\lambda$ has two $e$-hooks and the larger leg-length is even, or $\lambda$ has one $e$-hook and one $(2e)$-hook and the leg-length of the $(2e)$-hook is congruent to $0$ or $3$ modulo 4; otherwise, $\lambda$ is white.  We note that the relative $e$-sign of $\lambda$ is the parity of $\del\lambda$, i.e.\ $\sigma_e(\lambda) = (-1)^{\del\lambda}$.

\begin{thm}\cite[Lemmas 4.2 and 4.3, Theorem 4.4]{R}\label{T:R}
Consider the set $A$ of all weight $2$ partitions having a given $e$-core.  For each $0 \leq i < e$, let $A_i = \{ \lambda \in A \mid \del \lambda = i \}$.  In addition, let $A_{0,b} = \{ \lambda \in A_0 \mid \lambda \text{ is black} \}$, and $A_{0,w} = \{ \lambda \in A_0 \mid \lambda \text{ is white} \}$.
\begin{enumerate}
\item For each $0 \leq i < e$, $A_i$ is totally ordered by $\domeq$.
\item Let $B \in \{A_i \mid 1 \leq i < e\} \cup \{A_{0,b}, A_{0,w} \}$.
\begin{enumerate}
\item $B$ is non-empty.
\item $\lambda \in B$ is $e$-singular if and only if $\lambda$ is the least partition in $B$ (with respect to $\domeq$).
\item If $\lambda \in B$ is $e$-regular, then $m(\lambda)'$ is the next smaller partition in $B$ (with respect to $\domeq$).
\end{enumerate}
\end{enumerate}
\end{thm}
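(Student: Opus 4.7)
The plan is to fix an $e$-abacus display for the common $e$-core $\kappa$ of all partitions in $A$ and to parametrise each weight $2$ partition $\lambda$ with core $\kappa$ by the ``bead-move'' data taking the abacus of $\kappa$ to that of $\lambda$.  Such a $\lambda$ amounts to either a single bead moved down two positions on one runner (giving a $2e$-hook, or, if the intermediate position was previously occupied, two $e$-hooks on the same runner) or two distinct beads moved down one position each, on the same or on different runners.  From this parametrisation one reads off $\del\lambda$ as the absolute difference of the leg-lengths of the two hooks, which is a simple count of the intervening beads on the other runners; one also reads off the black/white colour when $\del\lambda=0$ from the leg-length of the $(2e)$-hook (or the larger $e$-hook) modulo $4$, as specified in the definition.

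For part (1), for a fixed value of $\del\lambda = i$, I would show that the set of bead-move data producing partitions in $A_i$ is naturally parametrised by a single integer — essentially the smaller of the two runners involved, since $i$ determines the other runner up to the cyclic structure — and that increasing this parameter strictly increases $\lambda$ in dominance.  This monotonicity is checked by comparing $\beta$-number sequences of consecutive elements in the chain: the two differ by moving a bead between adjacent runners, which transparently respects $\domeq$.  The same argument, applied separately to black and white configurations when $i=0$, yields the total ordering of $A_0$ as well.  Part (2)(a) is then handled by exhibiting one explicit configuration in each class; a short case analysis in $e$ shows both $A_{0,b}$ and $A_{0,w}$ are non-empty.

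For part (2)(b), I would invoke the abacus criterion for $e$-singularity (a weight $2$ partition $\lambda$ is $e$-singular precisely when its abacus displays one of a restricted list of local configurations at its bead moves), observe that within each chain this happens for exactly one configuration, and verify via the parametrisation of part (1) that this configuration is the minimum.  Part (2)(c) is the most delicate step: here I would combine the known combinatorial description of the Mullineux involution $m$ restricted to weight $2$ $e$-regular partitions (expressed directly on the abacus) with the chain parametrisation, and verify that $m(\lambda)'$ corresponds to shifting the bead-move parameter of $\lambda$ down by exactly one step.  The main obstacle will be exactly this final step: tracking the Mullineux map on the abacus while keeping the black/white colour invariant when $\del\lambda=0$ requires careful bookkeeping of leg-lengths modulo $4$, and it is this bookkeeping that motivates the specific congruence conditions appearing in the definition of colour.
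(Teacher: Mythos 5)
This result is quoted by the paper from Richards \cite[Lemmas 4.2 and 4.3, Theorem 4.4]{R} with no proof supplied, so the only meaningful comparison is with Richards' argument. Your overall strategy for parts (1), (2)(a) and (2)(b) --- parametrise weight $2$ partitions with a fixed $e$-core by their bead-move data on the abacus and analyse each class $A_i$ as an explicit chain --- is the standard and correct one. Two imprecisions should be flagged, though. First, your claim that for fixed $i$ ``$i$ determines the other runner up to the cyclic structure'' is not right for a general $e$-core: $\del\lambda$ is a difference of leg-lengths, which counts \emph{occupied positions} passed over, not a difference of runner indices; the correct single-integer parameter is the number of vacant positions above the moved bead (this is exactly what the $[a,b]$-labelling of Chuang--Turner, introduced later in the paper, records). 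Second, consecutive elements of a chain $A_i$ do not in general ``differ by moving a bead between adjacent runners''; they differ by a pair of compensating bead moves (one bead advanced past one bead of the core, another retracted), and the dominance comparison must be reduced to the standard lemma on $\beta$-sets rather than asserted as transparent. Both points are repairable, so I regard (1), (2)(a), (2)(b) as essentially sound in outline.

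The genuine gap is in (2)(c). You propose to ``invoke the known combinatorial description of the Mullineux involution $m$ restricted to weight $2$ $e$-regular partitions'' and check that $m(\lambda)'$ sits one step down the chain. But no such description is available independently of the statement you are proving: in this paper the weight $2$ formula for $m$ (Lemma \ref{L:Mull}) is itself derived downstream of Theorem \ref{T:R}, via the Rouquier-block computations and Scopes equivalence, so citing it here would be circular. The two non-circular options are (i) to run Mullineux's original algorithm (or the Kleshchev good-node reformulation) directly on the weight $2$ abacus configurations --- a substantial case analysis that your proposal does not actually carry out and whose difficulty you underestimate by describing it as ``bookkeeping of leg-lengths modulo $4$'' --- or (ii) Richards' actual route, which is representation-theoretic: one uses that $d_{m(\mu)'\mu}=1$ and that $d_{\lambda\mu}\ne 0$ forces $\mu \domeq \lambda \domeq m(\mu)'$ (Lemma \ref{L:basic}(2)), together with the Jantzen--Schaper formula to show that the largest partition $\nu \lhd \mu$ with $\del\nu = \del\mu$ (and the same colour when $\del\mu=0$) satisfies $d_{\nu\mu}\ne 0$ while no partition strictly between $\mu$ and $\nu$ in the chain can equal $m(\mu)'$; this pins $m(\mu)'$ down to the adjacent element without ever computing $m$ combinatorially. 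As it stands, your outline for (2)(c) does not contain the key idea needed to close the argument.
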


\begin{prop} \label{P:radWeyl}
Let $\lambda$ be a weight $2$ partition.  If $L^{\mu}$ and $L^{\nu}$ are two non-isomorphic composition factors of $\rad(\Delta^\lambda)$, then $\del \mu \ne \del\nu$ unless $\del\mu = \del\nu =0$, in which case, $\mu$ and $\nu$ are of different colour.
\end{prop}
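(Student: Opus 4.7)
The proof naturally proceeds by contradiction. Assume $L^\mu$ and $L^\nu$ are non-isomorphic composition factors of $\rad \Delta^\lambda$ with $\del\mu = \del\nu$, of the same colour if this common value is $0$. By Theorem \ref{T:R}(1), $\mu$ and $\nu$ lie in a common totally ordered subset $B$ among $\{A_i \mid i \ge 1\} \cup \{A_{0,b}, A_{0,w}\}$. After relabelling, $\mu \dom \nu$, so $\mu$ is $e$-regular by Theorem \ref{T:R}(2b). Lemma \ref{L:basic}(1) together with Lemma \ref{L:basic}(3) and Theorem \ref{T:vdecomp}(4) give $\mu \dom \lambda$, $\nu \dom \lambda$, and $d^e_{\lambda\mu}(v), d^e_{\lambda\nu}(v) \ne 0$.

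I then analyse $d^e_{\lambda\mu}(v)$. If $\lambda = m(\mu)'$, Theorem \ref{T:R}(2c) identifies $\lambda$ as the immediate predecessor of $\mu$ in $B$, but $\mu \dom \nu \dom \lambda$ would place $\nu$ strictly between $\mu$ and its predecessor in $B$, a contradiction. So $\lambda \ne m(\mu)'$, and Theorem \ref{T:Mull}(2) together with Theorem \ref{T:vdecomp}(2) force $d^e_{\lambda\mu}(v) = av$ for some $a \ge 1$. Theorem \ref{T:parity} then forces $\sigma_e(\lambda) \ne \sigma_e(\mu)$, i.e.\ $\del\lambda \not\equiv \del\mu \pmod 2$, so $\lambda \notin B$. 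Applying Proposition \ref{P:degree}, Theorem \ref{T:vdecomp}(2) and Theorem \ref{T:parity} to $\nu$---and observing that the parity constraint alone (without using $e$-regularity of $\nu$) rules out a $v^2$-term, because $\sigma_e(\lambda) \ne \sigma_e(\nu)$ follows from $\del\mu = \del\nu$---also yields $d^e_{\lambda\nu}(v) = bv$ for some $b \ge 1$.

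The final step, which I expect to be the main obstacle, is to derive a contradiction from $d^e_{\lambda\mu}(v) = av$ and $d^e_{\lambda\nu}(v) = bv$ with $\mu \dom \nu$ both in $B$ and $\lambda \notin B$. I plan to invoke the orthogonality identity
\[
\sum_{\rho} d^e_{\lambda\rho}(v)\, e^e_{\rho'\mu'}(-v) = 0
\]
of Theorem \ref{T:vdecomp}(3), whose support is restricted by Corollary \ref{C:basice} and Lemma \ref{L:basic}(1) to $\rho$ with $\rho \domeq \mu$ of the same $e$-core and $e$-weight as $\lambda$. The parity assertion of Theorem \ref{T:parity} splits the contributions in the sum according to $v$-parity, and the totally ordered structure of $B$ (Theorem \ref{T:R}(2c)) should allow one to isolate the $v$-term arising from $\rho = \nu$ with no possible cancellation from any other $\rho$, yielding the contradiction. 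The careful combinatorial bookkeeping along the chain $B$---ensuring that within $B$ no other element contributes a $d^e$- or $e^e$-term of the correct shape to cancel the $\nu$-contribution---is the technically heaviest part of the argument.
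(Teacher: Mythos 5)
Your proposal has two genuine problems, one of which is fatal to the whole strategy.

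First, the step ``Lemma \ref{L:basic}(1) together with Lemma \ref{L:basic}(3) and Theorem \ref{T:vdecomp}(4) give \dots\ $d^e_{\lambda\mu}(v), d^e_{\lambda\nu}(v) \ne 0$'' is false: Lemma \ref{L:basic}(3) says $d^{e,0}_{\lambda\mu} \leq d^{e,p}_{\lambda\mu}$, so the implication runs the wrong way --- a composition factor in characteristic $p$ need not survive to characteristic $0$, and $d^{e,p}_{\lambda\mu}\ne 0$ does not give $d^e_{\lambda\mu}(v)\ne 0$. Since the Proposition is stated (and needed) with no restriction on $p$ --- the paper explicitly remarks on this immediately afterwards --- the entire detour through $v$-decomposition numbers, parity and the orthogonality identity cannot get off the ground. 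Second, even granting characteristic $0$, your final step is only a plan: you do not actually produce the contradiction from $d^e_{\lambda\mu}(v)=av$ and $d^e_{\lambda\nu}(v)=bv$, and you correctly flag the ``combinatorial bookkeeping'' as the heaviest part. As written, the proof is incomplete precisely where it matters.

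The irony is that you already have all the pieces for the correct, purely combinatorial argument, which is what the paper does. The point you miss is that Lemma \ref{L:basic}(2) gives not just the dichotomy ``$\lambda = m(\mu)'$ or not'' but the two-sided bound: if $d_{\lambda\mu}\ne 0$ and $\mu$ is $e$-regular then $\mu \domeq \lambda \domeq m(\mu)'$, in any characteristic. Combining $\lambda \domeq m(\mu)'$ with Theorem \ref{T:R}(1,2c) --- which, since $\nu$ lies in the same chain $B$ strictly below $\mu$, forces $m(\mu)' \domeq \nu$ --- and with $\nu \dom \lambda$ (from $d_{\lambda\nu}\ne 0$, $\lambda\ne\nu$) yields
$$\mu \dom \lambda \domeq m(\mu)' \domeq \nu \dom \lambda,$$
an immediate contradiction. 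Your treatment of the special case $\lambda = m(\mu)'$ is exactly this argument in miniature; you just need to apply the dominance sandwich in general rather than splitting into cases and reaching for the Fock space machinery.
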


\begin{proof}
Suppose first that $\del\mu = \del\nu > 1$.  We may assume that $\mu \dom \nu$, and hence that $\mu$ is $e$-regular, by Theorem \ref{T:R}(1,2(b)).  Since $d_{\lambda\mu}, d_{\lambda\nu} \ne 0$, and $\lambda \notin \{\mu,\nu\}$, we have $\mu \dom \lambda \domeq m(\mu)'$ and $\nu \dom \lambda$ by Lemma \ref{L:basic}(1,2).  But, as $\mu \dom \nu$, we have $m(\mu)' \domeq \nu$ by Theorem \ref{T:R}(1,2(c)), so that
$$
\mu \dom \lambda \domeq m(\mu)' \domeq \nu \dom \lambda,
$$
a contradiction.  A similar argument shows that we also cannot have $\del\mu = \del\nu =0$ and at the same time $\mu$ and $\nu$ having the same colour.
\end{proof}

We remark that there is no restriction on the values of $p$ in Proposition \ref{P:radWeyl}.

\begin{prop} \label{P:decomp}
Suppose that $p \ne 2$.  Let $\lambda$ and $\mu$ be two partitions having $e$-weight $2$ and the same $e$-core, and assume that $\lambda \notin \{\mu,m(\mu)'\}$.
\begin{enumerate}
\item If $d^e_{\lambda\mu}(v) \ne 0$, then $d^e_{\lambda\mu}(v) = v$.
\item $d^e_{\lambda\mu}(1) = d^{e,p}_{\lambda\mu} = J^{e,p}_{\lambda\mu}$.
\end{enumerate}
\end{prop}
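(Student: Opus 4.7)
The plan is to prove (1) directly from the constraints on $d^e_{\lambda\mu}(v)$ and to deduce (2) from (1) via the hook-length structure in weight $2$ together with the hypothesis $p \ne 2$.

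\textbf{Plan for (1).} Since $\lambda \ne \mu$, Theorem \ref{T:vdecomp}(2) gives $d^e_{\lambda\mu}(v) \in v\mathbb{N}_0[v]$; Proposition \ref{P:degree} bounds its degree by $w = 2$; and Theorem \ref{T:parity} constrains its support to even or odd powers of $v$ according to whether $\sigma_e(\lambda)$ and $\sigma_e(\mu)$ agree. Together these force $d^e_{\lambda\mu}(v)$ to be $0$, $av$, or $bv^2$ for some $a, b \in \mathbb{N}_0$. To rule out the $v^2$ case when $\mu$ is $e$-regular, I would invoke Theorem \ref{T:Mull}(2): since $\lambda \ne m(\mu)'$, the degree is strictly less than $w = 2$. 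When $\mu$ is $e$-singular, Theorem \ref{T:equating} allows me to insert a runner empty relative to both $\lambda$ and $\mu$, producing $\lambda^+, \mu^+$ with $d^e_{\lambda\mu}(v) = d^{e+1}_{\lambda^+\mu^+}(v)$ and $\mu^+$ being $(e+1)$-regular; the degree-$2$ case would then force $\lambda^+ = m(\mu^+)'$, which I would rule out by a combinatorial analysis, using Richards' classification (Theorem \ref{T:R}) together with a companion of Corollary \ref{C:full} for empty runners, showing that $m(\mu^+)'$ cannot retain the empty-runner structure required for it to be of the form $\lambda^+$ for any partition $\lambda$ in the original $e$-block.

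It remains to bound $a \le 1$ in the case $d^e_{\lambda\mu}(v) = av$. Here I would use $d^e_{\lambda\mu}(1) = d^{e,0}_{\lambda\mu} = a$ by Theorem \ref{T:vdecomp}(4), combined with the classical fact (essentially Richards \cite{R}) that weight-$2$ Weyl modules are multiplicity-free in characteristic $0$.

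\textbf{Plan for (2).} In a weight-$2$ block, the only hook-lengths $h_{\lambda\rho}$ appearing in the Jantzen sum formula are $1$ and $2$, so $\nu_p(h_{\lambda\rho}) = 0$ whenever $p \ne 2$. Hence $J^{e,p}_{\lambda\mu} = J^{e,0}_{\lambda\mu}$. From (1) we have $d^e_{\lambda\mu}(v) \in \{0, v\}$, so $d^e_{\lambda\mu}(1) = \tfrac{d}{dv} d^e_{\lambda\mu}(v)\bigr|_{v=1}$; combining with Theorems \ref{T:R-H} and \ref{T:vdecomp}(4) yields $d^{e,0}_{\lambda\mu} = J^{e,0}_{\lambda\mu}$. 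Sandwiching via Lemma \ref{L:basic}(3) and Theorem \ref{T:Jantzen} then gives
\[ d^{e,0}_{\lambda\mu} \le d^{e,p}_{\lambda\mu} \le J^{e,p}_{\lambda\mu} = J^{e,0}_{\lambda\mu} = d^{e,0}_{\lambda\mu}, \]
forcing equality throughout.

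The main obstacle is the combinatorial step in (1) that rules out $\lambda^+ = m(\mu^+)'$ in the $e$-singular case, which requires careful tracking of how the Mullineux map interacts with empty-runner insertion.
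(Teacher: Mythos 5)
Your plan for (2) has a genuine gap at the step ``$J^{e,p}_{\lambda\mu}=J^{e,0}_{\lambda\mu}$''. The Jantzen sum $J^{e,p}_{\lambda\mu}=\sum(-1)^{l_{\lambda\rho}+l_{\tau\rho}+1}(1+\nu_p(h_{\lambda\rho}))\,d^{e,p}_{\tau\mu}$ depends on $p$ not only through the valuations $\nu_p(h_{\lambda\rho})$ (which, as you say, vanish for $p\ne2$ since $h\in\{1,2\}$ in weight $2$) but also through the decomposition numbers $d^{e,p}_{\tau\mu}$ for the partitions $\tau$ with $\lambda\xrightarrow{\rho}\tau$, all of which strictly dominate $\lambda$. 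Equating these with $d^{e,0}_{\tau\mu}$ is precisely an instance of the statement you are proving, so it cannot simply be asserted. The paper closes this by an induction down the dominance order: assuming (2) for all $\nu\dom\lambda$ gives $d^{e,p}_{\tau\mu}=d^{e,0}_{\tau\mu}$ for every $\tau$ occurring in the sum (the cases $\tau\in\{\mu,m(\mu)'\}$ being automatic, as those entries equal $1$ in every characteristic), whence $J^{e,p}_{\lambda\mu}=J^{e,0}_{\lambda\mu}$. Your sandwich $d^{e,0}\le d^{e,p}\le J^{e,p}=J^{e,0}=d^{e,0}$ then does finish the inductive step, and your derivation of $d^{e,0}_{\lambda\mu}=J^{e,0}_{\lambda\mu}$ from (1) via Theorems \ref{T:R-H} and \ref{T:vdecomp}(4) is fine; but the induction is not optional.

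For (1) you have left the two hardest points unproved. First, the bound $a\le 1$ is not available ``essentially from Richards'': Richards treats Hecke algebras, i.e.\ only the columns indexed by $e$-regular $\mu$, and such a citation would in any case beg a substantial part of what Proposition \ref{P:decomp} and Corollary \ref{C:decomp} are establishing. Second, your route through empty-runner insertion for $e$-singular $\mu$ leaves open exactly the possibility $\lambda^+=m(\mu^+)'$, which you flag but do not exclude; nothing in Theorem \ref{T:R} or Corollary \ref{C:full} (which concerns \emph{full} runners) disposes of it directly. The paper sidesteps both problems at once: it reduces to the case where $e$ is an odd prime by inserting \emph{full} runners (Theorem \ref{T:full} and Corollary \ref{C:full}), and there imports \cite[Theorem 2.2]{CT}, which supplies both $d^{e,e}_{\lambda\mu}\le1$ (hence, with Lemma \ref{L:basic}(3) and Theorem \ref{T:vdecomp}(4), that $d^e_{\lambda\mu}(v)$ is a monic monomial of degree at most $2$) and the sign condition $\sigma_e(\lambda)\ne\sigma_e(\mu)$ whenever $\lambda\notin\{\mu,m(\mu)'\}$; Theorem \ref{T:parity} then forces the degree to be odd, which kills the $v^2$ case uniformly for regular and singular $\mu$. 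If you wish to keep your architecture, you must either prove the characteristic-zero multiplicity-freeness and the claim $\lambda^+\ne m(\mu^+)'$ independently, or find a substitute for this positive-characteristic input.
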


\begin{proof} \hfill
\begin{enumerate}
\item Suppose first that $e$ is an odd prime.  By Theorem 2.2 of \cite{CT}, we have $d^{e,e}_{\lambda\mu} \ne 0$ if and only if $d^{e,e}_{\lambda\mu} = 1$, in which case $\sigma_e(\lambda) \ne \sigma_e(\mu)$ unless $\lambda\in \{ \mu, m(\mu)' \}$.  Thus, if $d^e_{\lambda\mu}(v) \ne 0$, then $0 <  d^e_{\lambda\mu}(1) = d^{e,0}_{\lambda\mu} \leq d^{e,e}_{\lambda\mu} \leq 1$ by Theorem \ref{T:vdecomp}(4) and Lemma \ref{L:basic}(3), so that $d^e_{\lambda\mu}(1) = d^{e,e}_{\lambda\mu} = 1$.  Hence $d^e_{\lambda\mu}(v)$ is a non-constant monic monomial by Theorem \ref{T:vdecomp}(2), whose degree is bounded above by $2$ by Proposition \ref{P:degree}.  Furthermore, since $\lambda \notin \{\mu,m(\mu)'\}$, we see that $\sigma_e(\lambda) \ne \sigma_e(\mu)$, so that $d_{\lambda\mu}(v) = v$ by Theorem \ref{T:parity}.

If $e = 2$ or $e$ is composite, let $k$ be a positive integer such that $e'= e + k$ is an odd prime.  By inserting $k$ full runners to $\lambda$ and $\mu$ to obtain $\widehat{\lambda}$ and $\widehat{\mu}$ respectively, we have $d^e_{\lambda\mu}(v) = d^{e'}_{\widehat{\lambda}\widehat{\mu}}(v)$ by Theorem \ref{T:full}.  Furthermore, $\lambda \notin \{\mu,m(\mu)'\}$ implies that $\widehat{\lambda} \notin \{\widehat{\mu},\widehat{m(\mu)'} \} = \{\widehat{\mu},m(\widehat{\mu})'\}$ by Corollary \ref{C:full}, so that (1) follows.

\item We prove by induction on $\lambda$.  If $\lambda$ is a maximal with respect to $\dom$, and $\lambda \ne \mu$, then clearly $d^e_{\lambda\mu}(1) = d^{e,p}_{\lambda\mu} = J^{e,p}_{\lambda\mu} = 0$.  Assume that $d^e_{\nu\mu}(1) = d^{e,p}_{\nu\mu} = J^{e,p}_{\nu\mu}$ whenever $\nu \dom \lambda$ and $\nu \notin \{ \mu, m(\mu)' \}$.  Then $d^e_{\nu\mu}(1) = d^{e,p}_{\nu\mu}$ for {\em all} $\nu \dom \lambda$ (even when $\nu \in \{\mu, m(\mu)'\}$).  This implies that $J^{e,0}_{\lambda\mu} = J^{e,p}_{\lambda\mu}$ for all $\lambda$ from the definition of $J_{\lambda\mu}^{e,-}$ (see Theorem \ref{T:Jantzen}).  Now, if $\lambda \notin \{\mu,m(\mu)'\}$, then by (1) and Theorem \ref{T:R-H}, we have $J^{e,0}_{\lambda\mu} \leq 1$.  This gives $d^{e,0}_{\lambda\mu} = J^{e,0}_{\lambda\mu}$ by Theorem \ref{T:Jantzen}.  Similarly, since $J^{e,p}_{\lambda\mu} = J^{e,0}_{\lambda\mu}$, we also have $d^{e,p}_{\lambda\mu} = J^{e,p}_{\lambda\mu}$.  Hence (2) follows.
\end{enumerate}
\end{proof}

\begin{cor}[of proof] \label{C:J}
Suppose that $p \ne 2$.  Let $\lambda$ and $\mu$ be two partitions having $e$-weight $2$ and the same $e$-core.  Then $$ J^{e,p}_{\lambda\mu} = J^{e,0}_{\lambda\mu}.$$
\end{cor}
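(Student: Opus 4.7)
The plan is to read the equality off directly from the Jantzen sum formula that defines $J^{e,p}_{\lambda\mu}$ and $J^{e,0}_{\lambda\mu}$. Two features of the weight-$2$ setting make this almost mechanical. First, because $\lambda$ has $e$-weight $2$, any $\rho$ with $\lambda \xrightarrow{\rho} \tau$ is obtained from $\lambda$ by sliding a bead up its runner by $ie$ positions with $i \in \{1,2\}$, so $h_{\lambda\rho} \in \{1,2\}$. Second, the hypothesis $p \ne 2$ (with the convention $\nu_0 \equiv 0$) forces $\nu_p(h_{\lambda\rho}) = 0$, so the factor $(1 + \nu_p(h_{\lambda\rho}))$ contributes $1$ to every summand, independently of whether we are computing $J^{e,p}_{\lambda\mu}$ or $J^{e,0}_{\lambda\mu}$.

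The only remaining ingredient is that each decomposition number $d_{\tau\mu}$ appearing in the sum is itself independent of $p$, for every weight-$2$ partition $\tau$ sharing an $e$-core with $\mu$. I would split into cases: for $\tau \notin \{\mu, m(\mu)'\}$ the equality $d^{e,p}_{\tau\mu} = d^{e,0}_{\tau\mu}$ is Proposition~\ref{P:decomp}(2) combined with Theorem~\ref{T:vdecomp}(4); the case $\tau = \mu$ gives both sides equal to $1$ by Lemma~\ref{L:basic}(1); and $\tau = m(\mu)'$ gives both sides equal to $1$ by Lemma~\ref{L:basic}(2)(c). Substituting these into the Jantzen sum formula then identifies $J^{e,p}_{\lambda\mu}$ with $J^{e,0}_{\lambda\mu}$ term by term.

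I do not anticipate any genuine obstacle: the argument is simply the bookkeeping step already carried out within the inductive proof of Proposition~\ref{P:decomp}(2), which is why the corollary carries the ``of proof'' label. The only place where the hypothesis $p \ne 2$ is essential is the computation $\nu_p(2) = 0$; were $p = 2$, the factor $(1 + \nu_p(2)) = 2$ would break the term-by-term matching and the conclusion would genuinely fail.
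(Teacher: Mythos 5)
Your proposal is correct and is essentially the paper's own argument: the authors extract $J^{e,p}_{\lambda\mu}=J^{e,0}_{\lambda\mu}$ from within the induction proving Proposition~\ref{P:decomp}(2), using exactly the same two observations you make (the factor $1+\nu_p(h_{\lambda\rho})$ equals $1$ since $h_{\lambda\rho}\le 2$ and $p\ne 2$, and every $d_{\tau\mu}$ in the sum is characteristic-independent, the exceptional cases $\tau\in\{\mu,m(\mu)'\}$ both giving $1$). One small quibble with a side remark: $p\ne 2$ is needed not only for $\nu_p(2)=0$ but also for the input $d^{e,p}_{\tau\mu}=d^{e,0}_{\tau\mu}$ itself, which is where Proposition~\ref{P:decomp}(2) uses that hypothesis.
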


\begin{thm} \label{T:wt2vdecomp}
Let $\lambda$ and $\mu$ be partitions having $e$-weight $2$ and the same $e$-core.  Then
$$
d_{\lambda\mu}^e(v) =
\begin{cases}
1, &\text{if } \lambda = \mu;\\
v, &\text{if } \mu \dom \lambda \dom m(\mu)', \text{ and } |\del\lambda - \del\mu| = 1; \\
v^2, &\text{if $\mu$ is $e$-regular, and } \lambda = m(\mu)'; \\
0, &\text{otherwise.}
\end{cases}
$$
The condition $\lambda \dom m(\mu)'$ is to be read as vacuous if $\mu$ is $e$-singular.
\end{thm}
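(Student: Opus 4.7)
The formula has four cases. The first case $\lambda=\mu$ yields $d^e_{\lambda\mu}(v)=1$ from Theorem~\ref{T:vdecomp}(1), and the case where $\mu$ is $e$-regular with $\lambda=m(\mu)'$ yields $v^2$ from Theorem~\ref{T:Mull}(1) with $w=2$. The substantive work is entirely in the remaining case $\lambda\notin\{\mu,m(\mu)'\}$, where Proposition~\ref{P:decomp}(1) has already delivered the dichotomy $d^e_{\lambda\mu}(v)\in\{0,v\}$. Using Theorem~\ref{T:vdecomp}(4), this reduces the question to an integer one: is $d^{e,0}_{\lambda\mu}$ equal to $1$ or $0$?

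The necessary direction is mostly in hand. Theorem~\ref{T:vdecomp}(2,4) and Lemma~\ref{L:basic}(1) give $\mu\domeq\lambda$; Lemma~\ref{L:basic}(2) gives $\lambda\domeq m(\mu)'$ when $\mu$ is $e$-regular; and Theorem~\ref{T:parity} combined with the identity $\sigma_e(\cdot)=(-1)^{\del(\cdot)}$ forces $\del\lambda\not\equiv\del\mu\pmod 2$. The genuinely combinatorial point is to refine ``odd'' to ``$=1$''.

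My plan for the sufficient direction and this refinement is to first reduce to the case $\mu$ $e$-regular, by inserting a fixed runner that is empty relative to both $\lambda$ and $\mu$: by Theorem~\ref{T:equating} one has $d^e_{\lambda\mu}(v)=d^{e+1}_{\lambda^+\mu^+}(v)$, while empty-runner insertion preserves both the $\del$-statistic (the occupied positions traversed by any bead on an original runner are unchanged) and the dominance order, and $\mu^+$ is $(e+1)$-regular. Since an $e$-singular $\mu$ is the least element of $A_{\del\mu}$ by Theorem~\ref{T:R}(2b), the auxiliary requirement $\lambda^+\dom m(\mu^+)'$ in the $(e+1)$-setting is then automatic from $\mu\dom\lambda$. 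In the reduced $e$-regular case I would appeal to the explicit combinatorial description of weight $2$ decomposition numbers from Richards \cite{R} and the Chuang-Tan Theorem~2.2 of \cite{CT} already used in the proof of Proposition~\ref{P:decomp}(1): these give precisely that $d^{e,0}_{\lambda\mu}\neq 0$ for $\lambda\notin\{\mu,m(\mu)'\}$ iff $\lambda$ lies in the dominance window $\mu\dom\lambda\dom m(\mu)'$ with $|\del\lambda-\del\mu|=1$.

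The main obstacle, and the point where the detailed combinatorial input from \cite{R,CT} is unavoidable, is the abacus-level fact that Richards' chain structure (Theorem~\ref{T:R}) permits no partition of $\del$-value $\ne\del\mu\pm 1$ inside the narrow dominance interval $\mu\dom\lambda\dom m(\mu)'$, while simultaneously guaranteeing exactly the right partitions for each of the two adjacent $\del$-values. Once this is in hand, the four cases of the formula assemble cleanly via the reductions above.
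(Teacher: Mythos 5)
Your handling of the easy cases, and the reduction of the remaining case to the integer question $d^{e,0}_{\lambda\mu}\in\{0,1\}$ via Proposition \ref{P:decomp}(1), matches the paper, as does the necessary direction. The gap is in how you settle the sufficient direction for general $e$. The only decomposition-number input the paper uses is \cite[Theorem 2.2]{CT}, a statement about Schur algebras in odd characteristic $p=e$, which therefore requires $e$ to be an odd prime (the paper cites Richards only for the purely combinatorial chain structure of Theorem \ref{T:R}, not for decomposition numbers). Your reduction by inserting one \emph{empty} runner (Theorem \ref{T:equating}) moves you from $e$ to $e+1$, which need not be an odd prime, so the citation is no more available after the reduction than before. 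The paper bridges this differently: it inserts $k$ \emph{full} runners to reach an odd prime $e'=e+k$, using Theorem \ref{T:full} together with Corollary \ref{C:full}; full-runner insertion preserves $\del$-values, $e$-regularity and dominance, and --- crucially, by Corollary \ref{C:full} --- intertwines the operation $\mu\mapsto m(\mu)'$, so every clause of the formula transfers verbatim from $e'$ back to $e$. These two results were set up in Section 2 precisely for this step, and your proposal never invokes them.

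A second, smaller gap: for $e$-singular $\mu$ you assert that the condition $\lambda^+\dom m(\mu^+)'$ is ``automatic from $\mu\dom\lambda$''. It is not. The partition $m(\mu^+)'$ lies outside the image of the empty-runner insertion (if it were some $\nu^+$, Theorems \ref{T:equating} and \ref{T:Mull}(1) would give $d^e_{\nu\mu}(v)=v^2$ with $\nu\notin\{\mu,m(\mu)'\}$, contradicting Proposition \ref{P:decomp}(1)), and it lies in a different Richards chain from $\lambda^+$, since $\del m(\mu^+)'=\del\mu^+$ while $\del\lambda^+=\del\mu^+\pm1$; Theorem \ref{T:R} says nothing about dominance between distinct chains. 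Nor can you argue that $d^{e+1}_{\lambda^+\mu^+}\ne0$ forces $\lambda^+\domeq m(\mu^+)'$ via Lemma \ref{L:basic}(2), because nonvanishing is exactly what you are trying to establish at that point. In the paper this issue never arises: \cite[Theorem 2.2]{CT} covers $e$-singular $\mu$ directly (the Schur algebra sees all column labels), so no regularisation step is needed, and the only transfer required is the full-runner one described above.
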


\begin{proof}
This follows from Proposition \ref{P:decomp}, \cite[Theorem 2.2]{CT} and Theorem \ref{T:Mull}(1) when $e$ is an odd prime.  When $e=2$ or $e$ is composite, let $k$ be a positive integer such that $e' = e+k$ is an odd prime.  Given a weight $2$ partition $\tau$ having the same $e$-core as $\lambda$ and $\mu$, let $\widehat{\tau}$ be the partition obtained by inserting $k$ full runners to the abacus display of $\tau$.  Then $\del\widehat{\tau} = \del\tau$, and $\tau$ is $e$-regular if and only if $\widehat{\tau}$ is $e'$-regular, in which case $\widehat{m(\tau)'} = m(\widehat{\tau})'$ by Corollary \ref{C:full}.  Furthermore, if $\rho$ is another weight 2 partition with the $e$-same core as $\tau$, then $\rho \dom \tau$ if and only if $\widehat{\rho} \dom \widehat{\tau}$.  Thus,
\begin{align*}
d_{\lambda\mu}^e(v) = d_{\widehat{\lambda}\widehat{\mu}}^{e'}(v) &=
\begin{cases}
1, &\text{if } \widehat{\lambda} = \widehat{\mu};\\
v, &\text{if } \widehat{\mu} \dom \widehat{\lambda} \dom m(\widehat{\mu})', \text{ and } |\del\widehat{\lambda} - \del\widehat{\mu}| = 1; \\
v^2, &\text{if $\widehat{\mu}$ is $e'$-regular, and } \widehat{\lambda} = m(\widehat{\mu})'; \\
0, &\text{otherwise,}
\end{cases} \\
&=
\begin{cases}
1, &\text{if } \lambda = \mu;\\
v, &\text{if } \mu \dom \lambda \dom m(\mu)', \text{ and } |\del\lambda - \del\mu| = 1; \\
v^2, &\text{if $\mu$ is $e$-regular, and } \lambda = m(\mu)'; \\
0, &\text{otherwise.}
\end{cases}
\end{align*}
\end{proof}

\begin{cor} \label{C:decomp}
Suppose that $p \ne 2$.  Let $\lambda$ and $\mu$ be partitions in a weight $2$ block of a $\q$-Schur algebra.  Then
$$
d^{e,p}_{\lambda\mu} = d^e_{\lambda\mu}(1) =
\begin{cases}
1, &\text{if } \lambda \in \{\mu, m(\mu)'\}, \\
   &\text{or both } \mu \dom \lambda \dom m(\mu)' \text{ and } |\del\lambda - \del\mu| = 1; \\
0, &\text{otherwise.}
\end{cases}
$$
The condition $\lambda \dom m(\mu)'$ is to be read as vacuous if $\mu$ is $e$-singular.
\end{cor}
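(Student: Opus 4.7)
The plan is to simply combine Theorem \ref{T:wt2vdecomp} with Proposition \ref{P:decomp}(2), together with standard facts from Lemma \ref{L:basic} and Theorem \ref{T:vdecomp}(4), so most of the work has already been done.

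First I would evaluate the formula in Theorem \ref{T:wt2vdecomp} at $v = 1$. This immediately yields the displayed piecewise formula for $d^e_{\lambda\mu}(1)$: the case $\lambda = \mu$ gives $1$; the cases $\mu \dom \lambda \dom m(\mu)'$ with $|\del\lambda - \del\mu| = 1$ give $v|_{v=1} = 1$; the case $\mu$ is $e$-regular with $\lambda = m(\mu)'$ gives $v^2|_{v=1} = 1$; all other cases give $0$. Note that the two ``$\lambda = m(\mu)'$'' subcases (when $\mu$ is $e$-regular and when $\mu$ is $e$-singular, in which case $m(\mu)'$ is undefined) are subsumed by the convention stated in the corollary.

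Next I would identify $d^e_{\lambda\mu}(1)$ with the characteristic $p$ decomposition number. For the two ``diagonal'' cases, $d^{e,p}_{\mu\mu} = 1$ holds by Lemma \ref{L:basic}(1), and $d^{e,p}_{m(\mu)'\mu} = 1$ holds by Lemma \ref{L:basic}(2)(c) (or directly because $L^\mu$ sits in the socle of $\Delta^{m(\mu)'}$). For all $\lambda \notin \{\mu, m(\mu)'\}$, Proposition \ref{P:decomp}(2) gives $d^{e,p}_{\lambda\mu} = d^e_{\lambda\mu}(1)$ directly, since $p \neq 2$.

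There is essentially no obstacle here: the corollary is the specialisation at $v = 1$ of Theorem \ref{T:wt2vdecomp} combined with the characteristic-$p$ comparison already established in Proposition \ref{P:decomp}(2). The only minor point to verify is that the characteristic-$p$ statement covers the diagonal entry $\lambda = m(\mu)'$, which it does once we invoke Lemma \ref{L:basic}(2)(c); equivalently, one can observe that by Lemma \ref{L:basic}(3), $1 = d^{e,0}_{m(\mu)'\mu} \leq d^{e,p}_{m(\mu)'\mu}$, while the reverse inequality follows inductively along $\dom$ exactly as in the proof of Proposition \ref{P:decomp}(2), since all decomposition numbers strictly above $m(\mu)'$ already agree with their characteristic $0$ counterparts.
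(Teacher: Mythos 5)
Your proposal is correct and matches the paper's own (one-line) proof, which likewise just cites Theorem \ref{T:wt2vdecomp} and Proposition \ref{P:decomp}; you have merely made explicit the routine checks the paper leaves implicit, namely the evaluation at $v=1$ and the diagonal cases $\lambda\in\{\mu,m(\mu)'\}$ handled by Lemma \ref{L:basic}. No issues.
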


\begin{proof}
This follows from Theorem \ref{T:wt2vdecomp} and Proposition \ref{P:decomp}.
\end{proof}

\begin{cor} \label{C:cl}
Suppose that $p \ne 2$.  Let $\lambda$ be a partition in a weight $2$ block of a $\q$-Schur algebra.
For each $i = 0, 1, \dotsc, e-1$, let $n_i$ be the number of composition factors of $\Delta^\lambda$ which are labelled by partitions having $\del$-value $i$.  Then
\begin{enumerate}
\item $n_i = 0$ for all $i$ such that $|i - \del\lambda| > 1$;
\item $n_{\del\lambda+1} \leq 1$;
\item $n_{\del\lambda} \leq 2$, with equality if and only if $\lambda$ is $e$-regular;
\item If $n_{\del\lambda-1} >1$, then $\del\lambda=1$ and $n_{\del\lambda-1} = 2$, and the two partitions which label the composition factors of $\Delta^{\lambda}$ and which have $\del$-value $0$ are of different colour.
\end{enumerate}
In particular, the composition length of $\Delta^\lambda$ is at most $5$, with equality only if $\lambda$ is $e$-regular and $\del\lambda =1$.
\end{cor}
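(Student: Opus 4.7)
My plan is to read off each $n_i$ directly from the classification of composition factors of $\Delta^\lambda$ furnished by Corollary \ref{C:decomp}. The partitions $\mu$ with $d_{\lambda\mu} = 1$ fall into three mutually exclusive families: (a) the head, $\mu = \lambda$; (b) those $\mu$ which are $e$-regular with $\lambda = m(\mu)'$ and $\mu \neq \lambda$, contributing the socle of $\Delta^\lambda$ by Lemma \ref{L:basic}(2b); and (c) the remaining $\mu \neq \lambda$ with $|\del\mu - \del\lambda| = 1$ and $\mu \dom \lambda \dom m(\mu)'$ (the right-hand inequality to be read as vacuous when $\mu$ is $e$-singular). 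By Theorem \ref{T:R}(2c), $m(\mu)'$ sits in the same element of $\{A_i, A_{0,b}, A_{0,w}\}$ as $\mu$, so families (a) and (b) preserve the $\del$-value whereas (c) shifts it by exactly one. Part (1) is then immediate.

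For (2), any composition factor with $\del$-value $\del\lambda + 1$ must arise from case (c) and is therefore distinct from $L^\lambda$, so it lies inside $\rad(\Delta^\lambda)$; since $\del\lambda + 1 > 0$, Proposition \ref{P:radWeyl} forbids two such distinct factors. Part (3) bounds $n_{\del\lambda}$ by the total contribution from cases (a) and (b), which is at most $1+1=2$; the equality criterion is then extracted, using Theorem \ref{T:R}(2), from the condition under which case (b) genuinely produces a second label distinct from $\lambda$. Part (4) is a further application of Proposition \ref{P:radWeyl}: two distinct composition factors of $\rad(\Delta^\lambda)$ sharing a common $\del$-value can only coexist when that value equals $0$ and the two partitions are of opposite colours, forcing $\del\lambda - 1 = 0$ and $n_{\del\lambda - 1} = 2$ whenever $n_{\del\lambda - 1} > 1$.

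Assembling, the composition length equals $n_{\del\lambda - 1} + n_{\del\lambda} + n_{\del\lambda + 1}$, bounded by $2 + 2 + 1 = 5$ from (2), (3), (4); equality compels each summand to attain its individual maximum, forcing both $\del\lambda = 1$ (by (4)) and the equality case of (3). The sole step requiring more than routine bookkeeping is the equality clause in (3), where the Mullineux-chain structure on $B_0$ supplied by Theorem \ref{T:R}(2) must be invoked to identify exactly when the case (b) contribution is present; everywhere else the argument is pure pigeonhole applied to Corollary \ref{C:decomp} and Proposition \ref{P:radWeyl}, with no multiplicity subtleties since every nonzero decomposition number in a weight $2$ block is $1$.
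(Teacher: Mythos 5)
Your route is exactly the paper's: the printed proof is a one-line citation of Corollary \ref{C:decomp}, Theorem \ref{T:R}(2c) and Proposition \ref{P:radWeyl}, and your elaboration of parts (1), (2), (4) and of the final count $n_{\del\lambda-1}+n_{\del\lambda}+n_{\del\lambda+1}\leq 2+2+1$ is correct as it stands.

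The problem is precisely the step you yourself flag as the only non-routine one, the equality clause in (3), which you leave unexecuted. Carry it out: case (b) contributes a second factor of $\del$-value $\del\lambda$ exactly when there is an $e$-regular $\mu\neq\lambda$ with $\lambda=m(\mu)'$, i.e.\ $\mu=m(\lambda')$, and this requires $\lambda'$ to be $e$-regular, i.e.\ $\lambda$ to be $e$-\emph{restricted} (equivalently, by Theorem \ref{T:R}(2b,c) and the order-reversing, $\del$-preserving effect of conjugation on the chains, that $\lambda$ is not the greatest element of its chain). That is not the condition ``$\lambda$ is $e$-regular'' appearing in the statement, and the two are genuinely different: for $e=3$ the partition $(6)$ is $3$-regular but $\Delta^{(6)}$ is simple, so $n_{\del\lambda}=1$, while $(1^6)$ is $3$-restricted but not $3$-regular and has $n_{\del\lambda}=2$. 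So your argument, correctly completed, proves ``equality if and only if $\lambda$ is $e$-restricted'' (and correspondingly ``length $5$ only if $\lambda$ is $e$-restricted and $\del\lambda=1$''), which is consistent with the paper's own proof of Theorem \ref{T:ext}, where the existence of the socle factor $L^{m(\lambda')}$ is explicitly tied to $\lambda$ being $e$-restricted; the statement as printed appears to have regular and restricted transposed. You need either to flag this discrepancy or to say explicitly what condition your ``case (b) genuinely occurs'' amounts to --- as written, the proposal silently papers over the one point where the claimed statement and the available argument diverge.
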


\begin{proof}
This follows from Theorem \ref{T:R}(2(c)), Proposition \ref{P:radWeyl} and Corollary \ref{C:decomp}.
\end{proof}

We now describe the Ext-quivers, and the Jantzen filtration and the radical filtration of the Weyl modules, of weight $2$ blocks of $\q$-Schur algebras when $p \ne 2$. Let $\lambda$ be a partition with $e$-weight $2$, and let
$$
\Delta^\lambda = \Delta^\lambda(0) \supseteq \Delta^\lambda(1) \supseteq \Delta^\lambda(2) \supseteq \dotsb
$$
be the Jantzen filtration of the Weyl module $\Delta^\lambda$.  For a partition $\mu$ having the same $e$-core and the same $e$-weigtht as $\lambda$, let $\operatorname{Jan}_{\lambda\mu}(v) = \sum_{i \geq 0} [\frac{\Delta^\lambda(i)}{\Delta^\lambda(i+1)}: L^\mu] v^i$, so that as $\mu$ varies, $\operatorname{Jan}_{\lambda\mu}(v)$ describes the Jantzen layers of $\Delta^\lambda$.  Similarly, let $\rad_{\lambda\mu}(v) = \sum_{i \geq 0} [\frac{\rad^i(\Delta^\lambda)}{\rad^{i+1}(\Delta^\lambda)} : L^{\mu}] v^i$, so that as $\mu$ varies, $\rad_{\lambda\mu}(v)$ describes the radical layers of $\Delta^\lambda$.

\begin{thm} \label{T:ext}
Suppose that $p \ne 2$.  Let $\lambda$ and $\mu$ be partitions in a weight $2$ block of a $\q$-Schur algebra, with $\mu \geq \lambda$.
\begin{enumerate}
\item $\operatorname{Jan}_{\lambda\mu}(v) = d_{\lambda\mu}(v) = \rad_{\lambda\mu}(v)$.
\item $\Ext^1(L^\lambda,L^\mu) \ne 0$ if and only if $d_{\lambda\mu}(v) =v$, in which case $\Ext^1(L^\lambda,L^\mu)$ is one-dimensional.  In particular, $\Ext^1(L^\lambda,L^\mu) = 0$ unless $\sigma_e(\lambda) \ne \sigma_e(\mu)$.
\end{enumerate}
\end{thm}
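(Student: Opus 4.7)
The plan for Theorem \ref{T:ext} splits along the two parts. For part (1), the Jantzen identity, I would write $a_j := [\Delta^\lambda(j)/\Delta^\lambda(j+1) : L^\mu]$ and combine the Jantzen sum formula (Theorem \ref{T:Jantzen}), the identification $J^{e,p}_{\lambda\mu} = \tfrac{d}{dv} d^e_{\lambda\mu}(v)\big|_{v=1}$ (from Corollary \ref{C:J} and Theorem \ref{T:R-H}), and the explicit monomial form $d^e_{\lambda\mu}(v) \in \{0, 1, v, v^2\}$ (Theorem \ref{T:wt2vdecomp}) to obtain the pair of constraints
\[
\sum_{j \geq 0} a_j = d^e_{\lambda\mu}(1), \qquad \sum_{j \geq 1} j\, a_j = \tfrac{d}{dv} d^e_{\lambda\mu}(v)\Big|_{v=1}.
\]
For each of the four possible values of $d^e_{\lambda\mu}(v)$, this system has a unique non-negative integer solution, forcing $a_j$ to equal the $v^j$-coefficient of $d^e_{\lambda\mu}(v)$.

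For the radical-filtration equality, I would first check that each Jantzen quotient is semisimple. By Section \ref{S:Jantzen} each is self-dual, and the layer description just proven combined with Proposition \ref{P:radWeyl} and Corollary \ref{C:decomp} shows its composition factors are pairwise non-isomorphic with multiplicity one; a brief direct argument then shows any self-dual module with such composition data is semisimple (since self-duality forces the head and socle to carry the same composition factors, exhausting the module). Semisimplicity of Jantzen quotients delivers $\rad^i(\Delta^\lambda) \subseteq \Delta^\lambda(i)$, with equality immediate for $i = 0, 1$. The essential remaining case is $\rad^2(\Delta^\lambda) = \Delta^\lambda(2)$ when $\lambda$ is $e$-restricted; here $\Delta^\lambda(2) = L^{m(\lambda')}$ is simple, and by Lemma \ref{L:basic}(2)(b) the socle $\soc(\rad\Delta^\lambda) = \soc\Delta^\lambda = L^{m(\lambda')}$ is also simple. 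Provided $\rad\Delta^\lambda$ has additional composition factors beyond this simple socle, it cannot itself be semisimple, forcing $\rad^2 \ne 0$ and hence $\rad^2 = L^{m(\lambda')} = \Delta^\lambda(2)$. The main obstacle is therefore the combinatorial verification that the middle Jantzen layer is non-empty whenever $\lambda$ is $e$-restricted in a weight $2$ block; this I would handle by exhibiting, from the non-emptiness of $A_{\del\lambda \pm 1}$ in Theorem \ref{T:R}(2)(a) and the Mullineux chains in Theorem \ref{T:R}(2)(c), a partition $\mu$ with $\del\mu = \del\lambda \pm 1$ satisfying $\mu \dom \lambda \dom m(\mu)'$, so that $d^e_{\lambda\mu}(v) = v$ by Theorem \ref{T:wt2vdecomp}.

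Part (2) is then a short consequence. Under $\mu \geq \lambda$ lexicographically, since dominance refines lex, $\lambda$ does not strictly dominate $\mu$; the highest-weight-category vanishing therefore gives $\Ext^1(\Delta^\lambda, L^\mu) = 0$. The long exact Hom-sequence from $0 \to \rad\Delta^\lambda \to \Delta^\lambda \to L^\lambda \to 0$ collapses to
\[
\Ext^1(L^\lambda, L^\mu) \cong \Hom(\rad\Delta^\lambda, L^\mu) = [\rad\Delta^\lambda/\rad^2\Delta^\lambda : L^\mu],
\]
which vanishes trivially when $\lambda = \mu$ since $L^\lambda$ sits only in the head of $\Delta^\lambda$. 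By part (1) this equals the coefficient of $v$ in $d^e_{\lambda\mu}(v)$, which is $1$ precisely when $d^e_{\lambda\mu}(v) = v$. The final ``in particular'' clause follows from Theorem \ref{T:parity}: a non-zero $v^1$-coefficient forces $\sigma_e(\lambda) \ne \sigma_e(\mu)$.
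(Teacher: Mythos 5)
Your treatment of the Jantzen identity $\operatorname{Jan}_{\lambda\mu}(v)=d_{\lambda\mu}(v)$ (matching the value and the derivative at $v=1$ via Corollary \ref{C:J}, Theorem \ref{T:R-H} and the monomial form of $d_{\lambda\mu}(v)$), your semisimplicity argument for the Jantzen layers, the reduction of the radical-filtration statement to the single claim that the middle Jantzen layer of $\Delta^\lambda$ is non-zero when $\lambda$ is $e$-restricted, and your derivation of part (2) from part (1) all agree in substance with the paper's proof and are sound. The problem is the one step you yourself flag as ``the main obstacle'': you propose to produce a partition $\mu$ with $\mu\dom\lambda\dom m(\mu)'$ and $|\del\mu-\del\lambda|=1$ ``from the non-emptiness of $A_{\del\lambda\pm1}$ in Theorem \ref{T:R}(2)(a) and the Mullineux chains in Theorem \ref{T:R}(2)(c)''. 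That is not an argument; it is a restatement of what must be proved. Non-emptiness of the adjacent $\del$-classes says nothing about where $\lambda$ sits relative to them: a priori $\lambda$ could dominate, or be dominated by, every member of $A_{\del\lambda\pm1}$, or could fail to fall strictly inside any gap $(\mu,m(\mu)')$ of those chains, and Theorem \ref{T:R} as quoted contains no interleaving information that would rule this out. Note that the claim is exactly equivalent to ``$\Delta^\lambda$ has composition length at least $3$ for $e$-restricted $\lambda$'', i.e.\ to the very possibility (a uniserial $\Delta^\lambda=[L^\lambda\,|\,L^{m(\lambda')}]$) that would make the theorem false; so it cannot be dismissed as routine.

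There is also no cheap homological escape: since $m(\lambda')\dom\lambda$, the vanishing $\Ext^1(\Delta^{m(\lambda')},L^\lambda)=0$ is not available, so one cannot directly show $\Ext^1(L^\lambda,L^{m(\lambda')})=0$ and conclude that a length-two non-split $\Delta^\lambda$ is impossible. The paper closes this gap with a genuinely substantive two-stage argument: first it uses $d_{\lambda m(\lambda')}(v)=v^2$, Theorem \ref{T:R-H} and the combinatorial Jantzen sum formula to show that the set $\Omega=\{\nu\mid d_{\nu m(\lambda')}(v)=v\}$ is non-empty, and then, taking $\mu$ minimal in $\Omega$, it analyses the self-dual heart of the projective cover $P^{m(\lambda')}$ (via Lemma \ref{L:basic}(2a) and its Weyl filtration) to force a copy of $L^\mu$ into the socle of $\Delta^\lambda/L^{m(\lambda')}$, whence $d_{\lambda\mu}(v)=v$. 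You would need to supply this, or an argument of comparable strength, for your proof to be complete.
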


\begin{proof}
Part(2) follows from the second equality of (1) since the dimension of $\Ext^1(L^\lambda,L^\mu)$ equals the composition multiplicity of $L^{\mu}$ in the second radical layer of $\Delta^\lambda$.

For part (1), note first that since the composition factors of the Weyl module $\Delta^\lambda$ are multiplicity-free by Corollary \ref{C:decomp}, so that for each $\mu$, $\Jan_{\lambda\mu}(v)$ is either a monic monomial or zero, according to whether $L^\mu$ is a composition factor of $\Delta^\lambda$ or not.  Furthermore, the following three statements are equivalent:
$$(1)\ \Jan_{\lambda\mu}(v) = 1; \qquad (2)\ \lambda =\mu; \qquad (3)\ d_{\lambda\mu}(v) = 1.$$
As such, to prove that $\Jan_{\lambda\mu}(v) = d_{\lambda\mu}(v)$, it suffices to show that $\frac{d}{dv} \Jan_{\lambda\mu}(v) |_{v=1} = \frac{d}{dv} d_{\lambda\mu}(v) |_{v=1}$, or equivalently, by Theorem \ref{T:Jantzen} and Theorem \ref{T:R-H}, that $J_{\lambda\mu}^{e,p} = J_{\lambda\mu}^{e,0}$.  But this holds by Corollary \ref{C:J}.

For $\rad_{\lambda\mu}(v)$, since the composition factors of $\Delta^\lambda$ are multiplicity-free by Corollary \ref{C:decomp}, and the Jantzen layers of $\Delta^\lambda$ are self-dual, we see that the Jantzen layers are in fact semi-simple, so that all the composition factors in the second Jantzen layer $\frac{\Delta^\lambda(1)}{\Delta^\lambda(2)}$ (which are precisely the composition factors of $\Delta^\lambda$ not of the form $L^\lambda$ or $L^{m(\lambda')}$) lie in the second radical layer $\frac{\rad(\Delta^\lambda)}{\rad^2(\Delta^\lambda)}$.  Thus, if $\lambda$ is not $e$-restricted (so that $m(\lambda')$ is not defined and $L^{m(\lambda')}$ does not exist), then $\Jan_{\lambda\mu}(v) = \rad_{\lambda\mu}(v)$ holds for all $\mu$.  If $\lambda$ is $e$-restricted, then since $\Delta^\lambda$ has a simple socle $L^{m(\lambda')}$ by Lemma \ref{L:basic}(2b), $\Jan_{\lambda\mu}(v) =  \rad_{\lambda\mu}(v)$ will hold for all $\mu$ if the second Jantzen layer is non-zero, or equivalently, if there exists some $\mu$ such that $d_{\lambda\mu}(v) = v$.  We now proceed to show this.

Note that there exists $\nu$ such that $d_{\nu m(\lambda')}(v) = v$: for, if not, then $d^{e,0}_{\nu m(\lambda')} = 0$ for all $\nu \notin \{ \lambda, m(\lambda')\}$ by Theorem \ref{T:wt2vdecomp} and Corollary \ref{C:decomp}, so that $J^{e,0}_{\lambda m(\lambda')} \ne 2$ (see Section \ref{S:Jantzen} for the definition of $J_{\lambda\mu}$), contradicting Theorems \ref{T:Mull}(1) and \ref{T:R-H}.  Thus $\Omega = \{ \nu \mid d_{\nu m(\lambda')}(v) = v\}$ is non-empty, and let $\mu$ be a minimal element (with respect to $\dom$) of $\Omega$.  Then $d_{\mu m(\lambda')}(v) = v$, so that $m(\lambda') \dom \mu \dom \lambda$ by Theorem \ref{T:wt2vdecomp}, and hence, by what we have already shown, $\Ext^1(L^{\mu}, L^{m(\lambda')})$ is one-dimensional.  The projective cover $P^{m(\lambda')}$ is self-dual by Lemma \ref{L:basic}(2a), and has a filtration with factors $\Delta^{m(\lambda')}$, $\Delta^{\nu}$ ($\nu \in \Omega$), and $\Delta^{\lambda}$.  Thus its heart $Q$ is self-dual, and has a filtration with factors $\rad(\Delta^{m(\lambda')})$, $\Delta^{\nu}$ ($\nu \in \Omega$) and $\Delta^{\lambda}/L^{m(\lambda')}$. Among these factors of $Q$, $L^{\mu}$ is a composition factor of $\Delta^{\mu}$ and possibly $\Delta^{\lambda}/L^{m(\lambda')}$ only, by the minimality of $\mu$.  Since $\Ext^1(L^{\mu}, L^{m(\lambda')})$ is one-dimensional, we have $L^\mu$ occurring exactly once in the head of $Q$.  Thus, this copy of $L^{\mu}$ must come from the head of the factor $\Delta^{\mu}$. As $d_{\mu m(\lambda')}$ ($= d_{\mu m(\lambda')}(v)|_{v=1}$ by Corollary \ref{C:decomp}) is non-zero, so that $\Delta^{\mu}$ is not simple, this copy of $L^{\mu}$ cannot lie in the socle of $Q$.  But by self-duality of $Q$, there must be a copy of $L^{\mu}$ in its socle, which can only come from $\Delta^{\lambda}/L^{m(\lambda')}$.  Thus $d_{\lambda\mu} \ne 0$, and hence $d_{\lambda\mu}(v)\ne 0$ by Corollary \ref{C:decomp}.  As $d_{\mu m(\lambda')}(v) = v$, this gives $\del \mu = \del m(\lambda') \pm 1 = \del \lambda \pm 1$ by Theorems \ref{T:wt2vdecomp} and \ref{T:R}(2c), so that $d_{\lambda\mu}(v) = v$.
\end{proof}

\begin{rem} \hfill
\begin{enumerate}
\item By applying the Schur functor to Theorem \ref{T:ext}, one can obtain the Ext-quivers and radical filtrations of Specht modules of the weight $2$ blocks of Iwahori-Hecke algebras where $p \ne 2$.  We omit the details here, but refer the interested reader to Section 6 of \cite{CT} in which the case where $\q=1$ and $p$ is an odd prime is dealt with.
\item The first equality of Theorem \ref{T:ext}(1) in particular proves that the conjecture (see \cite[Section 9]{LLT}) that the $v$-decomposition numbers describe the Jantzen filtration of the Weyl modules of $\q$-Schur algebras in characteristic $0$ holds for weight $2$ blocks.
\end{enumerate}
\end{rem}

\section{$[2:1]$-pairs}

In this section, we study the structure of $[2:1]$-pairs of $\q$-Schur algebras.  Throughout this section, we assume that $p \ne 2$, and $B$ and $C$ are weight 2 blocks of $\q$-Schur algebras forming a $[2:1]$-pair, where the $e$-core of $C$ can be obtained from that of $B$ by interchanging runners $(i-1)$ and $i$ of its abacus display.

There are exactly three partitions in $B$ which have more than one bead on runner $i$ of their respective abacus displays whose preceding positions on runner $(i-1)$ are vacant.  We call these partitions in $B$ {\em exceptional} and the others {\em non-exceptional}.  The exceptional ones are
labelled $\alpha$, $\beta$ and $\gamma$, and runners $(i-1)$ and $i$ of their respective abacus displays have the following form:
$$
\begin{matrix}
\alpha \\
\begin{smallmatrix}
\vdots & \vdots \\
\bullet & \bullet \\
\bullet & - \\
- & \bullet \\
- & \bullet \\
- & -
\end{smallmatrix}
\end{matrix} \qquad
\begin{matrix}
\beta \\
\begin{smallmatrix}
\vdots & \vdots \\
\bullet & \bullet \\
- & \bullet \\
\bullet & - \\
- & \bullet \\
- & -
\end{smallmatrix}
\end{matrix} \qquad
\begin{matrix}
\gamma \\
\begin{smallmatrix}
\vdots & \vdots \\
\bullet & \bullet \\
- & \bullet \\
- & \bullet \\
\bullet & - \\
- & -
\end{smallmatrix}
\end{matrix}
$$

Similarly, there are exactly three partitions in $C$ which have more than one bead on runner $(i-1)$ of their respective abacus displays whose succeeding positions on runner $i$ are vacant.  We call these partitions in $C$ {\em exceptional} and the others {\em non-exceptional}.  The exceptional ones are
labelled $\wt{\alpha}$, $\wt{\beta}$ and $\wt{\gamma}$, and runners $(i-1)$ and $i$ of their respective abacus displays have the following form:
$$
\begin{matrix}
\wt{\alpha} \\
\begin{smallmatrix}
\vdots & \vdots \\
\bullet & \bullet \\
\bullet & - \\
\bullet & - \\
- & \bullet \\
- & -
\end{smallmatrix}
\end{matrix} \qquad
\begin{matrix}
\wt{\beta} \\
\begin{smallmatrix}
\vdots & \vdots \\
\bullet & \bullet \\
\bullet & - \\
- & \bullet \\
\bullet & - \\
- & -
\end{smallmatrix}
\end{matrix} \qquad
\begin{matrix}
\wt{\gamma} \\
\begin{smallmatrix}
\vdots & \vdots \\
\bullet & \bullet \\
- & \bullet \\
\bullet & - \\
\bullet & - \\
- & -
\end{smallmatrix}
\end{matrix}
$$
Note that $\alpha$ is the unique partition in $B$ which has more than one normal bead on runner $i$, and $\wt{\alpha}$ is the unique partition in $C$ which has more than one conormal bead on runner $(i-1)$.

Let the conjugate block of $B$ be denoted by $B'$.  Thus $B'$ is the block having the same weight as $B$, and the $e$-core of $B'$ is the conjugate partition of the $e$-core of $B$.  Similarly, denote the conjugate block of $C$ by $C'$.  Then $B'$ and $C'$ also form a $[2:1]$-pair (with each other), with exceptional partitions are $\gamma'$, $\beta'$, $\alpha'$, and $\wt{\gamma}'$, $\wt{\beta}'$, $\wt{\alpha}'$ respectively.  

The following is well known for $[2:1]$-pairs:

\begin{thm} \label{T:21}
Let $\lambda$ be a partition in $B$ such that $\lambda \ne \alpha$.
\begin{enumerate}
\item $\Phi(\alpha) = \wt{\alpha}$, $\Phi(\beta) = \wt{\gamma}$, $\Phi(\gamma) = \wt{\beta}$.
\item $\del \lambda = \del \Phi(\lambda)$.
\item $\del\alpha = \del \gamma = \del \beta - 1$, $\del\wt{\alpha} = \del\wt{\gamma} = \del\wt{\beta} + 1$, $\del\alpha = \del \wt{\alpha} -1$.
\item
\begin{align*}
d_{\lambda\alpha} &=
\begin{cases}
1, &\text{if } \lambda \in \{\alpha, \beta, \gamma\}; \\
0, &\text{otherwise.}
\end{cases} \\
d_{\wt{\mu}\wt{\alpha}} &=
\begin{cases}
1, &\text{if } \wt{\mu} \in \{\wt{\alpha}, \wt{\beta}, \wt{\gamma}\}; \\
0, &\text{otherwise.}
\end{cases}
\end{align*}
In particular, $\gamma = m(\alpha)'$, $c_{\alpha\lambda} = c_{\lambda\alpha} = d_{\alpha\lambda} + d_{\beta\lambda}+d_{\gamma\lambda}$, and $\wt{\gamma} = m(\wt{\alpha})'$, $c_{\wt{\alpha}\wt{\mu}} = c_{\wt{\mu}\wt{\alpha}} = d_{\wt{\alpha}\wt{\mu}} + d_{\wt{\beta}\wt{\mu}} + d_{\wt{\gamma}\wt{\mu}}$.
\item \begin{alignat*}{2}
[\Delta^\alpha \down_{C}] &= [\Delta^{\wt{\alpha}}] + [\Delta^{\wt{\beta}}], \qquad
[\Delta^{\wt{\alpha}} \up^{B}] &= [\Delta^\alpha] + [\Delta^\beta]; \\
[\Delta^\beta \down_{C}] &= [\Delta^{\wt{\alpha}}] + [\Delta^{\wt{\gamma}}], \qquad
[\Delta^{\wt{\beta}} \up^{B}] &= [\Delta^\alpha] + [\Delta^\gamma]; \\
[\Delta^\gamma \down_{C}] &= [\Delta^{\wt{\beta}}] + [\Delta^{\wt{\gamma}}], \qquad
[\Delta^{\wt{\gamma}} \up^{B}] &= [\Delta^\beta] + [\Delta^\gamma].
\end{alignat*}
\item $c_{\alpha\lambda} \ne 0$ if and only if $c_{\wt{\alpha}\Phi(\lambda)} \ne 0$.
\item $[L^\alpha \down_{C} : L^{\wt{\alpha}}] = 2 = [L^{\wt{\alpha}} \up^{B} : L^\alpha]$.
\item $L^{\lambda} \down_{C} = L^{\Phi(\lambda)}$ and $L^{\Phi(\lambda)} \up^B = L^\lambda$.
\item If $\mu \notin \{\alpha,\beta,\gamma\}$, then $d_{\mu\lambda} = d_{\Phi(\mu)\Phi(\lambda)}$.
\end{enumerate}
\end{thm}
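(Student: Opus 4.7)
The statement bundles together a number of standard facts about $[2:1]$-pairs; the plan is to handle them in a dependency-respecting order, with (1)--(3) being pure combinatorics from the six abacus displays, (4) and (5) applications of Corollary~\ref{C:decomp} and the restriction/induction theorem for Weyl modules respectively, and (6)--(9) following from Brundan's branching theorem~\ref{T:Bru} together with the earlier parts.

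For (1), in each of $\alpha,\beta,\gamma$ the topmost normal bead on runner $i$ can be read directly from the picture; sliding it to the vacant preceding position on runner $(i-1)$ produces $\wt\alpha,\wt\gamma,\wt\beta$ respectively. The apparent $\beta \leftrightarrow \gamma$ swap simply reflects that $\Phi$ sends the two non-maximal exceptional partitions in $B$ to a dominance-reversed pair in $C$. For (2), a direct computation of the leg-lengths of the two $e$-hooks removed from $\lambda$ and from $\Phi(\lambda)$ shows that their absolute difference is preserved (the same phenomenon lying behind the preservation of $\sigma_e$ under $\Phi$), and (3) is an explicit inspection of the six diagrams. For (4), (1) together with Theorem~\ref{T:R}(2c) gives $m(\alpha)' = \gamma$, and Corollary~\ref{C:decomp} then asserts $d_{\lambda\alpha} = 1$ precisely when $\lambda \in \{\alpha,\gamma\}$ or $\alpha \dom \lambda \dom \gamma$ with $|\del\lambda - \del\alpha| = 1$; combined with the total ordering of $A_{\del\alpha+1}$ from Theorem~\ref{T:R}(1), the three exceptional displays show that $\beta$ is the unique such intermediate $\lambda$. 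The closed form for $c_{\alpha\lambda}$ then follows from $c_{\lambda\mu} = \sum_\nu d_{\nu\lambda}d_{\nu\mu}$, and the $\wt\alpha$-statements are proved identically. For (5), a bead-move on $\alpha$'s abacus preserves the $e$-core required to land in $C$ only if it exchanges a bead between runners $(i-1)$ and $i$; inspection shows exactly two such moves are available (to $\wt\alpha$ and $\wt\beta$), and likewise for $\beta$ and $\gamma$; the induction formulas are dual.

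For (8) with $\lambda$ non-exceptional, having exactly one normal bead on runner $i$ is precisely the definition of non-exceptionality, so Brundan's theorem~\ref{T:Bru}(4) applies verbatim. Part (9) then follows by computing $[\Delta^\mu \down_C : L^{\Phi(\lambda)}]$ in two ways: by (5) it equals $d_{\Phi(\mu)\Phi(\lambda)}$, while expanding $\Delta^\mu$ in its composition series and restricting factor by factor (using (8) for non-exceptional constituents and (4), (7) to control the exceptional ones) recovers $d_{\mu\lambda}$. Part (6) is then an immediate corollary of (4) and (9). The crux is (7): Brundan's theorem only provides $\soc(L^\alpha \down_C) = L^{\wt\alpha}$, and to upgrade this to composition multiplicity $2$ one combines the Weyl filtration $[\Delta^\alpha \down_C] = [\Delta^{\wt\alpha}] + [\Delta^{\wt\beta}]$ from (5), the decomposition numbers computed in (4) (which pin down the full composition series of each $\Delta^{\wt\nu}$), and self-duality of $L^\alpha$ to read off the Loewy structure of $L^\alpha \down_C$. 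I expect this last step to be the main technical obstacle of the entire theorem.
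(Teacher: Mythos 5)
The paper offers no proof of this theorem --- it is introduced with ``The following is well known for $[2:1]$-pairs'' and left to the literature (Scopes, Richards, Brundan, Chuang--Tan) --- so I can only assess your proposal on its own terms. Your overall architecture is reasonable and, importantly, not circular: Corollary \ref{C:decomp} is established in Section 3 independently of this theorem, so invoking it for part (4) is legitimate within this paper (though it quietly imports the standing hypothesis $p \ne 2$, whereas the classical proofs of these facts need no such restriction). Parts (1)--(3), (5) and your two-sided count for (7) are fine.

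There are, however, two concrete gaps. First, your treatment of (8) rests on the assertion that ``having exactly one normal bead on runner $i$ is precisely the definition of non-exceptionality.'' That is false: exceptionality is defined by having more than one bead on runner $i$ whose \emph{preceding position is vacant}, while normality involves a cancellation condition. The partitions $\beta$ and $\gamma$ are exceptional, yet each has exactly one normal bead on runner $i$ (one of the two candidate beads is cancelled by the vacancy below it on runner $i$ with occupied preceding position) --- this is exactly why the paper remarks that $\alpha$ is the \emph{unique} partition in $B$ with more than one normal bead. As written, your argument proves (8) only for non-exceptional $\lambda$, but the theorem asserts it for all $\lambda \ne \alpha$, and you genuinely need the cases $\lambda = \beta, \gamma$ downstream: in your derivation of (9) the sum $\sum_\nu d_{\mu\nu}[L^\nu\down_C : L^{\Phi(\lambda)}]$ can involve $\nu = \beta$ or $\gamma$. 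The fix is easy (apply Theorem \ref{T:Bru}(4) to $\beta$ and $\gamma$ as well), but the missing observation must be made. Second, (6) is not ``an immediate corollary of (4) and (9)'': by (4), $c_{\alpha\lambda} = d_{\alpha\lambda}+d_{\beta\lambda}+d_{\gamma\lambda}$ and $c_{\wt{\alpha}\Phi(\lambda)} = d_{\wt{\alpha}\Phi(\lambda)}+d_{\wt{\beta}\Phi(\lambda)}+d_{\wt{\gamma}\Phi(\lambda)}$, and (9) says nothing whatsoever about the rows $\alpha,\beta,\gamma$ or $\wt{\alpha},\wt{\beta},\wt{\gamma}$, so it cannot relate these two sums. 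You need a separate argument, e.g.\ that $P^\alpha\down_C \cong P^{\wt{\alpha}}$ and $P^{\wt{\alpha}}\up^B \cong P^\alpha$ (via adjointness and the simple heads of $L^\lambda\down_C$), or the identity of Corollary \ref{C:easy} combined with an upper bound on $c_{\alpha\lambda}$. Finally, in (4) the identifications $m(\alpha)' = \gamma$ and the uniqueness of $\beta$ as the intermediate partition require checking not only that nothing in $A_{\del\alpha+1}$ other than $\beta$ sits between $\alpha$ and $\gamma$, but also that nothing in $A_{\del\alpha-1}$ does, and that $\gamma$ is adjacent to $\alpha$ in its Richards class; these are finite abacus verifications, but you should not present them as read off from the displays alone.
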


\begin{cor} \label{C:easy}
Let $\lambda$ be a partition in $B$ such that $\lambda \ne \alpha$.
Then
$$d_{\alpha\lambda} + d_{\wt{\gamma}\Phi(\lambda)} =
d_{\beta\lambda} + d_{\wt{\beta}\Phi(\lambda)} =
d_{\gamma\lambda} + d_{\wt{\alpha}\Phi(\lambda)}.$$
\end{cor}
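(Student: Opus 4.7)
The plan is to derive the two equalities of Corollary~\ref{C:easy} by taking Grothendieck-group differences of the formulas in Theorem~\ref{T:21}(5) and reading off $L^{\Phi(\lambda)}$-multiplicities. Subtracting pairs of the Weyl-module restriction identities in that theorem yields
$$ [\Delta^\alpha \down_C] - [\Delta^\beta \down_C] = [\Delta^{\wt{\beta}}] - [\Delta^{\wt{\gamma}}] $$
and
$$ [\Delta^\beta \down_C] - [\Delta^\gamma \down_C] = [\Delta^{\wt{\alpha}}] - [\Delta^{\wt{\beta}}]. $$

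I would then compare the $L^{\Phi(\lambda)}$-multiplicities of both sides. For any $\mu \in B$ we have $[\Delta^\mu \down_C] = \sum_{\nu \in B} d_{\mu\nu} [L^\nu \down_C]$ in the Grothendieck group of $C$, since restriction followed by projection onto $C$ is exact. Theorem~\ref{T:21}(8) tells us that $L^\nu \down_C = L^{\Phi(\nu)}$ whenever $\nu \ne \alpha$, so the $L^{\Phi(\lambda)}$-multiplicity of $L^\nu \down_C$ equals $\delta_{\nu\lambda}$ for such $\nu$. The only other contribution comes from $\nu = \alpha$, whose contribution I will denote $b := [L^\alpha \down_C : L^{\Phi(\lambda)}]$; crucially, $b$ depends on $\lambda$ but \emph{not} on $\mu$. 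Using $\lambda \ne \alpha$ to ensure that the $\nu = \lambda$ and $\nu = \alpha$ terms do not coincide, this gives
$$ [\Delta^\mu \down_C : L^{\Phi(\lambda)}] = d_{\mu\lambda} + d_{\mu\alpha}\, b. $$

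The decisive observation is that, by Theorem~\ref{T:21}(4), $d_{\mu\alpha} = 1$ for each $\mu \in \{\alpha,\beta,\gamma\}$. Hence the unknown quantity $b$ cancels in the differences $[\Delta^\alpha \down_C : L^{\Phi(\lambda)}] - [\Delta^\beta \down_C : L^{\Phi(\lambda)}]$ and $[\Delta^\beta \down_C : L^{\Phi(\lambda)}] - [\Delta^\gamma \down_C : L^{\Phi(\lambda)}]$, leaving $d_{\alpha\lambda} - d_{\beta\lambda}$ and $d_{\beta\lambda} - d_{\gamma\lambda}$ respectively. Equating these with the $L^{\Phi(\lambda)}$-multiplicities of the right-hand sides of the two displayed identities, namely $d_{\wt{\beta}\Phi(\lambda)} - d_{\wt{\gamma}\Phi(\lambda)}$ and $d_{\wt{\alpha}\Phi(\lambda)} - d_{\wt{\beta}\Phi(\lambda)}$, and rearranging terms delivers the two asserted equalities.

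I do not anticipate any serious obstacle: the argument is essentially an exercise in bookkeeping that exploits the single non-trivial behaviour of the restriction functor on simples (namely, the anomalous $L^\alpha \down_C$) and arranges for it to cancel. The only points needing care are keeping track of the hypothesis $\lambda \ne \alpha$ and interpreting $[\Delta^\mu \down_C]$ consistently as a class in the Grothendieck group of $C$ rather than of $\mathcal{S}_{n-1}$.
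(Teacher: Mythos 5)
Your proof is correct and is essentially the paper's own argument: the paper likewise computes $[\Delta^{\mu}\down_C : L^{\Phi(\lambda)}]$ for $\mu \in \{\alpha,\beta,\gamma\}$ both via Theorem \ref{T:21}(5) and via Theorem \ref{T:21}(4,8), obtaining three equations each containing the common unknown $[L^{\alpha}\down_C : L^{\Phi(\lambda)}]$, which then cancels. Taking explicit differences first, as you do, is only a cosmetic reorganisation of the same bookkeeping.
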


\begin{proof}
By Theorem \ref{T:21}(4,5,8), we have
\begin{alignat*}{2}
d_{\wt{\alpha}\Phi(\lambda)} + d_{\wt{\beta}\Phi(\lambda)} &= [\Delta^\alpha \down_{C} : L^{\Phi(\lambda)}] &&= [L^\alpha \down_{C} : L^{\Phi(\lambda)}] + d_{\alpha\lambda}; \\
d_{\wt{\alpha}\Phi(\lambda)} + d_{\wt{\gamma}\Phi(\lambda)} &=[\Delta^\beta \down_{C} : L^{\Phi(\lambda)}] &&= [L^\alpha \down_{C} : L^{\Phi(\lambda)}] + d_{\beta\lambda}; \\
d_{\wt{\beta}\Phi(\lambda)} + d_{\wt{\gamma}\Phi(\lambda)} &=[\Delta^\gamma \down_{C} : L^{\Phi(\lambda)}] &&= [L^\alpha \down_{C} : L^{\Phi(\lambda)}] + d_{\gamma\lambda}.
\end{alignat*}
The Corollary thus follows.
\end{proof}

\begin{lem} \label{L:commonB}
Let $\lambda$ be a partition in $B$ such that $\lambda \ne \alpha$.  Then $d_{\alpha\lambda}$ and $d_{\beta\lambda}$ are both non-zero if and only if $\beta$ is $e$-restricted and $\lambda = m(\beta')$, and $d_{\beta\lambda}$ and $d_{\gamma\lambda}$ are both non-zero if and only if $\lambda = \beta$.
\end{lem}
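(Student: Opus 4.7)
The plan is to combine a case analysis using Corollary~\ref{C:decomp} and the row structure of Theorem~\ref{T:R} with a double-counting argument exploiting Theorem~\ref{T:21}. As a preliminary observation, $\alpha \dom \beta \dom \gamma$: the equalities $d_{\beta\alpha} = d_{\gamma\alpha} = 1$ from Theorem~\ref{T:21}(4), combined with Corollary~\ref{C:decomp} and Theorem~\ref{T:21}(3), force $\alpha \dom \beta \dom m(\alpha)' = \gamma$; analogously $\wt\alpha \dom \wt\beta \dom \wt\gamma$ in $C$. Thus $\alpha$ and $\gamma$ lie in a common $\del$-row (with value $\del\alpha$) while $\beta$ lies in the adjacent row (value $\del\alpha+1$), and similarly in $C$.

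For part (b), I would enumerate all $\lambda \neq \alpha$ for which both $d_{\beta\lambda}$ and $d_{\gamma\lambda}$ are non-zero via the three clauses of Corollary~\ref{C:decomp}. Imposing the third clause for both simultaneously forces
\[
\del\lambda \in \{\del\alpha - 1,\, \del\alpha + 1\} \cap \{\del\alpha,\, \del\alpha + 2\} = \varnothing,
\]
while the mixed pairings (predecessor-in-row for one together with another clause for the other) either place $\lambda$ in the wrong $\del$-row or contradict $\alpha \dom \beta \dom \gamma$; only $\lambda = \beta$ survives. To confirm $d_{\gamma\beta} \neq 0$, I would double-count $[\Delta^\gamma \down_C : L^{\wt\gamma}]$: Theorem~\ref{T:21}(5) gives it as $d_{\wt\beta\wt\gamma} + 1 = 1$ (using $\wt\beta \dom \wt\gamma$ and Lemma~\ref{L:basic}(1) to kill $d_{\wt\beta\wt\gamma}$), whereas Theorem~\ref{T:21}(1),(8) together with the bijectivity of $\Phi$ expand it as $d_{\gamma\beta} + d_{\gamma\alpha}[L^\alpha \down_C : L^{\wt\gamma}]$. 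A parallel expansion of $[\Delta^\beta \down_C : L^{\wt\gamma}] = 1 = d_{\beta\beta} + d_{\beta\alpha}[L^\alpha \down_C : L^{\wt\gamma}]$ yields $[L^\alpha \down_C : L^{\wt\gamma}] = 0$, whence $d_{\gamma\beta} = 1$.

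For part (a), the analogous case analysis pares $\lambda$ down to the predecessor of $\beta$ in $\beta$'s $\del$-row (arising from $d_{\beta\lambda} \neq 0$ via $\beta = m(\lambda)'$ coupled with $d_{\alpha\lambda} \neq 0$ via the third clause subject to $\lambda \dom \alpha$). Such a predecessor exists iff $\beta$ is $e$-restricted: if $m(\lambda)' = \beta$ then $m(\lambda) = \beta'$ is $e$-regular, and conversely if $\beta'$ is $e$-regular then Theorem~\ref{T:Mull}(1) gives $d_{\beta, m(\beta')}(v) = v^2$, identifying $m(\beta')$ as the predecessor via Theorem~\ref{T:wt2vdecomp}. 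To verify that $\lambda = m(\beta')$ works, the Mullineux duality $d_{\nu\mu} = d_{\nu' m(\mu)}$ of Lemma~\ref{L:basic}(2c) gives $d_{\beta, m(\beta')} = d_{\beta', \beta'} = 1$ and $d_{\alpha, m(\beta')} = d_{\alpha', \beta'}$. The latter equals $1$ by applying the already-proved part (b) to the conjugate $[2:1]$-pair $(B', C')$: its exceptional partitions $\gamma', \beta', \alpha'$ play the $\alpha$-$\beta$-$\gamma$ roles, so that $d_{\alpha'\beta'}$ is the direct analog of $d_{\gamma\beta}$.

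The main technical obstacle is the identity $d_{\gamma\beta} = 1$: a direct check via Corollary~\ref{C:decomp} would reduce to showing $\gamma \dom m(\beta)'$ whenever $\beta$ is $e$-regular, which is awkward combinatorially. The double-counting of $[\Delta^\gamma \down_C : L^{\wt\gamma}]$ above bypasses this by routing the computation through the $[2:1]$-pair symmetry, with Theorem~\ref{T:21} supplying all the needed ingredients.
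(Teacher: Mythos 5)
Your proof is correct, and while the ``only if'' directions coincide with the paper's argument (the same case analysis via Corollary \ref{C:decomp}, Lemma \ref{L:basic} and the chain $\alpha\dom\beta\dom\gamma$), the two non-vanishing claims at the heart of the converse directions are handled by a genuinely different route. For $d_{\gamma\beta}=1$ the paper simply states that a direct computation gives $J_{\gamma\beta}=1$ and invokes Theorem \ref{T:Jantzen}; you instead double-count $[\Delta^\gamma\down_C:L^{\wt{\gamma}}]$ and $[\Delta^\beta\down_C:L^{\wt{\gamma}}]$ against the branching data of Theorem \ref{T:21}(5,8), which is clean, avoids any abacus computation, and yields $[L^\alpha\down_C:L^{\wt{\gamma}}]=0$ as a by-product. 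For $d_{\alpha\,m(\beta')}\ne 0$ the paper argues through the module structure of $\Delta^\beta$ (its socle is $L^{m(\beta')}$, $L^\alpha$ lies in its semisimple heart, so $\Ext^1(L^\alpha,L^{m(\beta')})\ne 0$, and Theorem \ref{T:ext} converts this into a decomposition number); you use the Mullineux symmetry $d_{\alpha\,m(\beta')}=d_{\alpha'\beta'}$ of Lemma \ref{L:basic}(2c) and the observation that $d_{\alpha'\beta'}$ is precisely the $d_{\gamma\beta}$ of the conjugate pair $(B',C')$, whose exceptional partitions $\gamma',\beta',\alpha'$ play the roles of $\alpha,\beta,\gamma$ — so the already-proved second assertion applies. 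Your route has the merit of being independent of Theorem \ref{T:ext} (and hence of the radical-filtration results of Section 3), relying only on Theorem \ref{T:21}, Corollary \ref{C:decomp} and Mullineux duality; the paper's Ext-theoretic argument is more structural but heavier. One small expository point: in your part (b) enumeration, the mixed pairing ``$\gamma=m(\lambda)'$ together with the third clause for $\beta$'' is not eliminated by a $\del$-row or dominance contradiction as your parenthetical suggests, but simply because it forces $\lambda=m(\gamma')=\alpha$, which is excluded by hypothesis; this does not affect the conclusion.
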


\begin{proof}
If $d_{\alpha\lambda}, d_{\beta\lambda} \ne 0$, then either $\del\lambda = \del\alpha$, or $\del\lambda = \del\beta$ by Corollary \ref{C:decomp} and Theorem \ref{T:21}(3).  If $\del\lambda = \del\alpha$, then $\lambda = m(\alpha')$ (and $\alpha$ is $e$-restricted) by Corollary \ref{C:decomp}, since $d_{\alpha\lambda} \ne 0$ and $\lambda \ne \alpha$.  But $\beta \not\domeq \alpha = m(\lambda)'$, so that $d_{\beta\lambda} = 0$ by Lemma \ref{L:basic}(2), a contradiction.  Thus $\del\lambda = \del\beta$, so that $\lambda \in \{ \beta,m(\beta') \}$ by Corollary \ref{C:decomp}, since $d_{\beta\lambda} \ne 0$.  As $\beta \not\domeq \alpha$, we have $d_{\alpha\beta} = 0$ by Lemma \ref{L:basic}(1), and thus $\lambda \ne \beta$, giving $\lambda = m(\beta')$ (and $\beta$ is $e$-restricted).  Conversely, if $\beta$ is $e$-restricted and $\lambda = m(\beta')$, then $L^\lambda = L^{m(\beta')}$ is the socle of $\Delta^{\beta}$ by Lemma \ref{L:basic}(2b); in particular, $d_{\beta\lambda} \ne 0$.  By Theorems \ref{T:wt2vdecomp}, \ref{T:21}(3,4) and \ref{T:ext}(2), we see that $L^\alpha$ lies in the semi-simple heart of $\Delta^{\beta}$.  Thus, $\Ext^1(L^\alpha,L^\lambda) \ne 0$, so that either $d_{\alpha\lambda} \ne 0$ or $d_{\lambda\alpha} \ne 0$ by Theorem \ref{T:ext}(1) and Corollary \ref{C:decomp}.  As the latter cannot hold by Theorem \ref{T:21}(4), we have $d_{\alpha\lambda} \ne 0$.

If $d_{\beta\lambda}, d_{\gamma\lambda} \ne 0$, then either $\del\lambda = \del\beta$, or $\del\lambda = \del\gamma$ by Corollary \ref{C:decomp} and Theorem \ref{T:21}(3).  If $\del\lambda = \del\gamma$, then, by Corollary \ref{C:decomp}, $\lambda = \gamma$ since $d_{\gamma\lambda} \ne 0$ and $\lambda \ne \alpha = m(\gamma')$.  But $\gamma \not\domeq \beta$, so that $d_{\beta\lambda} = 0$ by Lemma \ref{L:basic}(1), a contradiction.  Thus $\del\lambda = \del\beta$, so that $\lambda \in \{ \beta,m(\beta') \}$ by Corollary \ref{C:decomp} (since $d_{\beta\lambda} \ne 0$).  If ($\beta$ is $e$-restricted and) $\lambda = m(\beta')$, then $m(\lambda)' = \beta \not\trianglelefteq \gamma$, so that $d_{\gamma\lambda} = 0$ by Lemma \ref{L:basic}(1), a contradiction.  Thus $\lambda = \beta$.  Conversely, if $\lambda = \beta$, then clearly $d_{\beta\lambda} \ne 0$.  Furthermore, one can easily verify by direct computation that $J_{\gamma\beta} = 1$, so that $d_{\gamma \beta} = 1$ by Theorem \ref{T:Jantzen}.
\end{proof}

\begin{lem} \label{L:commonwtB}
Let $\lambda$ be a partition in $B$ such that $\lambda \ne \alpha$.  Then $d_{\wt{\alpha}\Phi(\lambda)}$ and $d_{\wt{\beta}\Phi(\lambda)}$ are both non-zero if and only if $\wt{\beta}$ is $e$-restricted and $\Phi(\lambda) = m(\wt{\beta}')$, and $d_{\wt{\beta}\Phi(\lambda)}$ and $d_{\wt{\gamma}\Phi(\lambda)}$ are both non-zero if and only if $\Phi(\lambda) = \wt{\beta}$.
\end{lem}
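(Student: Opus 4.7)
The plan is to mirror the proof of Lemma~\ref{L:commonB} closely, with the triple $(\wt\alpha, \wt\beta, \wt\gamma)$ and $\mu := \Phi(\lambda)$ playing the analogous roles of $(\alpha, \beta, \gamma)$ and $\lambda$. The structural parallels I will exploit are: the pattern $\del\wt\alpha = \del\wt\gamma = \del\wt\beta + 1$ from Theorem~\ref{T:21}(3), so $\wt\beta$ is now the outlier; the dominance chain $\wt\alpha \dom \wt\beta \dom \wt\gamma$ (coming from $\wt\gamma = m(\wt\alpha)'$ by Theorem~\ref{T:21}(4) and Lemma~\ref{L:basic}(2)); and the derived identity $m(\wt\gamma') = \wt\alpha$, obtained by conjugating $\wt\gamma = m(\wt\alpha)'$ and applying Mullineux. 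Moreover, $\mu \ne \wt\alpha$ since $\lambda \ne \alpha$ and $\Phi$ is a bijection with $\Phi(\alpha) = \wt\alpha$.

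For the forward directions I will run the same $\del$-analysis via Corollary~\ref{C:decomp}. In Part~1, the candidate values of $\del\mu$ arising from $d_{\wt\alpha\mu} \ne 0$ and $d_{\wt\beta\mu} \ne 0$ intersect in $\{\del\wt\alpha, \del\wt\beta\}$. The case $\del\mu = \del\wt\alpha$ forces $\mu = m(\wt\alpha')$ (since $\mu \ne \wt\alpha$), but then $d_{\wt\beta\mu} \ne 0$ would require $\wt\beta \dom \wt\alpha$, contradicting $\wt\alpha \dom \wt\beta$; so $\del\mu = \del\wt\beta$, giving $\mu \in \{\wt\beta, m(\wt\beta')\}$, with $\mu = \wt\beta$ killed by Lemma~\ref{L:basic}(1) via $d_{\wt\alpha\wt\beta} = 0$. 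For Part~2, the case $\del\mu = \del\wt\alpha$ forces $\mu \in \{\wt\gamma, m(\wt\gamma')\} = \{\wt\gamma, \wt\alpha\}$, hence $\mu = \wt\gamma$, giving the contradiction $d_{\wt\beta\wt\gamma} = 0$; and the case $\mu = m(\wt\beta')$ yields $m(\mu)' = \wt\beta$, so $d_{\wt\gamma\mu} \ne 0$ would demand $\wt\gamma \dom \wt\beta$.

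For the backward direction of Part~1, I use the Ext-theoretic argument of Lemma~\ref{L:commonB}: Lemma~\ref{L:basic}(2b) gives $L^{m(\wt\beta')} = \soc(\Delta^{\wt\beta})$, while Corollary~\ref{C:decomp} (using $\wt\alpha \dom \wt\beta \dom \wt\gamma$ and $|\del\wt\alpha - \del\wt\beta| = 1$) shows that $L^{\wt\alpha}$ is a composition factor of $\Delta^{\wt\beta}$, lying in the semisimple heart by Theorem~\ref{T:ext}(1). Since $\wt\alpha \ne m(\wt\beta')$ (else $\wt\beta = \wt\gamma$), the resulting non-split subquotient gives $\Ext^1(L^{\wt\alpha}, L^{m(\wt\beta')}) \ne 0$, so by Theorem~\ref{T:ext}(2) either $d_{\wt\alpha m(\wt\beta')} \ne 0$ or $d_{m(\wt\beta')\wt\alpha} \ne 0$; the latter requires $m(\wt\beta') \in \{\wt\alpha, \wt\beta, \wt\gamma\}$ by Theorem~\ref{T:21}(4). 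For the backward direction of Part~2, $d_{\wt\beta\wt\beta} = 1$ is trivial, and $d_{\wt\gamma\wt\beta} = 1$ falls out of Corollary~\ref{C:easy} applied to $\lambda = \gamma$ (using $\Phi(\gamma) = \wt\beta$), once one notes $d_{\alpha\gamma} = d_{\beta\gamma} = d_{\wt\alpha\wt\beta} = 0$ from Lemma~\ref{L:basic}(1).

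The main obstacle lies in excluding the three cases for $m(\wt\beta')$ in the Part~1 converse. Two of them are immediate from $\del$-chains: since $\del m(\wt\beta') = \del\wt\beta \ne \del\wt\alpha = \del\wt\gamma$, neither $m(\wt\beta') = \wt\alpha$ nor $m(\wt\beta') = \wt\gamma$ is possible. The remaining case $m(\wt\beta') = \wt\beta$ is the genuine obstacle, as here the $\del$-values agree. I will argue via non-simplicity: $\Delta^{\wt\beta}$ has $L^{\wt\alpha}$ as a composition factor distinct from its head $L^{\wt\beta}$, so $\Delta^{\wt\beta}$ is not simple; multiplicity-freeness of its composition factors (Corollary~\ref{C:decomp}) then forces its head $L^{\wt\beta}$ and socle $L^{m(\wt\beta')}$ to be distinct, yielding $\wt\beta \ne m(\wt\beta')$ and completing the argument.
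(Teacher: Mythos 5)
Your proof is correct and is essentially the paper's, which simply states that an argument entirely analogous to that of Lemma \ref{L:commonB} applies; you have carried out that analogy faithfully, including the necessary adjustments ($\wt{\beta}$ now being the $\del$-outlier, $m(\wt{\gamma}')=\wt{\alpha}$, and the explicit exclusion of $m(\wt{\beta}')\in\{\wt{\alpha},\wt{\beta},\wt{\gamma}\}$). The only genuine, and harmless, deviation is that you obtain $d_{\wt{\gamma}\wt{\beta}}=1$ from Corollary \ref{C:easy} applied to $\lambda=\gamma$ rather than by the direct computation $J_{\wt{\gamma}\wt{\beta}}=1$ used in the model argument of Lemma \ref{L:commonB}.
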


\begin{proof}
An argument entirely analogous to that used in Lemma \ref{L:commonB} applies.
\end{proof}

\begin{cor} \label{C:c}
Let $\lambda$ be a partition in $B$ such that $\lambda \ne \alpha$.
\begin{enumerate}
\item Both $c_{\alpha\lambda}$ and $c_{\wt{\alpha}\Phi(\lambda)}$ are bounded above by $2$.
\item If $c_{\alpha\lambda} \ne 0$, then
$d_{\alpha\lambda} + d_{\wt{\gamma}\Phi(\lambda)} =
d_{\beta\lambda} + d_{\wt{\beta}\Phi(\lambda)} =
d_{\gamma\lambda} + d_{\wt{\alpha}\Phi(\lambda)} = 1$; in particular, $c_{\alpha\lambda} + c_{\wt{\alpha}\Phi(\lambda)} = 3$.
\end{enumerate}
\end{cor}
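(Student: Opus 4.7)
The plan is to reduce both parts to arithmetic on the decomposition numbers appearing in the three relations of Corollary \ref{C:easy}, using the fact that in weight $2$ blocks with $p\ne 2$ all decomposition numbers are $0$ or $1$ (Corollary \ref{C:decomp}). By Theorem \ref{T:21}(4), $c_{\alpha\lambda}=d_{\alpha\lambda}+d_{\beta\lambda}+d_{\gamma\lambda}$ and $c_{\wt{\alpha}\Phi(\lambda)}=d_{\wt{\alpha}\Phi(\lambda)}+d_{\wt{\beta}\Phi(\lambda)}+d_{\wt{\gamma}\Phi(\lambda)}$, so each of $c_{\alpha\lambda}, c_{\wt{\alpha}\Phi(\lambda)}$ is an integer in $\{0,1,2,3\}$.

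For part (1), I would rule out the value $3$. Suppose $c_{\alpha\lambda}=3$. Then $d_{\alpha\lambda}=d_{\beta\lambda}=d_{\gamma\lambda}=1$. In particular $d_{\beta\lambda}$ and $d_{\gamma\lambda}$ are both non-zero, so Lemma \ref{L:commonB} forces $\lambda=\beta$. But $\alpha\dom\beta$ strictly (as one sees from the given runner pictures), so $\beta\not\trianglerighteq\alpha$ and Lemma \ref{L:basic}(1) gives $d_{\alpha\beta}=0$, a contradiction. The same argument, with Lemma \ref{L:commonwtB} and the chain $\wt{\alpha}\dom\wt{\beta}\dom\wt{\gamma}$ in place of Lemma \ref{L:commonB}, rules out $c_{\wt{\alpha}\Phi(\lambda)}=3$. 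This settles part (1).

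For part (2), let $k$ be the common value of the three expressions in Corollary \ref{C:easy}. Summing the three equalities gives
\[
c_{\alpha\lambda}+c_{\wt{\alpha}\Phi(\lambda)} = 3k.
\]
Part (1) gives $3k\le 4$, so $k\in\{0,1\}$. If $c_{\alpha\lambda}\ne 0$, then at least one of $d_{\alpha\lambda},d_{\beta\lambda},d_{\gamma\lambda}$ equals $1$, hence at least one of the three equal sums is $\ge 1$, forcing $k\ge 1$. Combined with the previous bound we get $k=1$, which is exactly the common value $1$ asserted in part (2), and $c_{\alpha\lambda}+c_{\wt{\alpha}\Phi(\lambda)}=3$ follows immediately.

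I do not expect any genuine obstacle here; the only subtlety is being sure that the two dominance orderings $\alpha\dom\beta\dom\gamma$ and $\wt{\alpha}\dom\wt{\beta}\dom\wt{\gamma}$ (visible from the bead diagrams preceding Theorem \ref{T:21}) are correctly used when combining Lemmas \ref{L:commonB} and \ref{L:commonwtB} with Lemma \ref{L:basic}(1) to derive the contradictions in part (1).
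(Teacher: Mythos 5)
Your proof is correct and takes essentially the same route as the paper's: part (1) from Theorem \ref{T:21}(4) together with Lemmas \ref{L:commonB} and \ref{L:commonwtB} (the paper leaves the incompatibility implicit, while you spell out the contradiction via $\lambda=\beta$ and $d_{\alpha\beta}=0$), and part (2) by summing the three equalities of Corollary \ref{C:easy} and combining the bound $3k\le 4$ from (1) with $k\ge 1$ from $c_{\alpha\lambda}\ne 0$.
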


\begin{proof}
(1) follows directly from Theorem \ref{T:21}(4) and Lemmas \ref{L:commonB} and \ref{L:commonwtB}.  For (2), let $d_{\alpha\lambda} + d_{\wt{\gamma}\Phi(\lambda)} =
d_{\beta\lambda} + d_{\wt{\beta}\Phi(\lambda)} =
d_{\gamma\lambda} + d_{\wt{\alpha}\Phi(\lambda)} = r$ (the first two equalities follow from Corollary \ref{C:easy}). Summing up the three equations, we get $c_{\alpha\lambda} + c_{\wt{\alpha}\Phi(\lambda)} = 3r$ by Theorem \ref{T:21}(4).  Since $0< c_{\alpha\lambda} + c_{\wt{\alpha}\Phi(\lambda)} \leq 4$ by (1), and $r$ is an integer, we must have $r=1$, and hence $c_{\alpha\lambda} + c_{\wt{\alpha}\Phi(\lambda)} = 3$.
\end{proof}

\begin{prop} \label{P:prelim}
Let $\lambda$ be a partition in $B$ such that $\lambda \ne \alpha$.
\begin{enumerate}
\item If $c_{\alpha\lambda} = 2$, then $[L^{\wt{\alpha}} \up^B : L^\lambda] = 1$.
\item If $c_{\alpha\lambda} = 1$, then $[L^{\wt{\alpha}} \up^B : L^\lambda] = 0$.
\item If $c_{\wt{\alpha}\Phi(\lambda)} = 2$, then $[L^{\alpha} \down_{C} : L^{\Phi(\lambda)}] = 1$.
\item If $c_{\wt{\alpha}\Phi(\lambda)} = 1$, then $[L^{\alpha} \down_{C} : L^{\Phi(\lambda)}] = 0$.
\end{enumerate}
\end{prop}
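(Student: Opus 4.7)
The plan is to pin down $n := [L^{\wt\alpha}\up^B : L^\lambda]$ by realising it as the multiplicity of $P^{\wt\alpha}$ in a direct-sum decomposition of $P^\lambda\down_C$, and then extracting $n$ from a short linear system involving Weyl-filtration multiplicities.

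First, since $\down_C$ is exact and preserves projectives, $P^\lambda\down_C$ splits as a sum of indecomposable projectives in $C$.  For each partition $\wt\mu$ in $C$, adjunction gives
$$[L^{\wt\mu}\up^B : L^\lambda] = \dim\Hom_B(P^\lambda, L^{\wt\mu}\up^B) = \dim\Hom_C(P^\lambda\down_C, L^{\wt\mu}),$$
which equals the multiplicity of $P^{\wt\mu}$ as a summand of $P^\lambda\down_C$.  Theorem \ref{T:21}(8) gives $L^{\wt\mu}\up^B \cong L^{\Phi^{-1}(\wt\mu)}$ for all $\wt\mu \ne \wt\alpha$, so this multiplicity equals $\delta_{\wt\mu,\Phi(\lambda)}$ for $\wt\mu \ne \wt\alpha$.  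Noting that $\Phi(\lambda) \ne \wt\alpha$ because $\lambda \ne \alpha$, I obtain
$$P^\lambda\down_C \cong n P^{\wt\alpha} \oplus P^{\Phi(\lambda)}.$$

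Next, I would compute $[P^\lambda\down_C : \Delta^{\wt\nu}]$ for $\wt\nu \in \{\wt\alpha,\wt\beta,\wt\gamma\}$ in two ways.  Expanding $[P^\lambda] = \sum_\nu d_{\nu\lambda}[\Delta^\nu]$ and using $[\Delta^\nu\down_C] = [\Delta^{\Phi(\nu)}]$ for non-exceptional $\nu$ (which follows from the restriction formula in Section 2.5, since such $\nu$ admit a unique bead move between runners $i-1$ and $i$ that remains in $C$) together with Theorem \ref{T:21}(5) for the exceptional $\nu$, I get
$$[P^\lambda\down_C : \Delta^{\wt\alpha}] = d_{\alpha\lambda}+d_{\beta\lambda}, \quad [P^\lambda\down_C : \Delta^{\wt\beta}] = d_{\alpha\lambda}+d_{\gamma\lambda}, \quad [P^\lambda\down_C : \Delta^{\wt\gamma}] = d_{\beta\lambda}+d_{\gamma\lambda}.$$
Expanding instead via the decomposition above and using $d_{\wt\nu\wt\alpha} = 1$ for $\wt\nu \in \{\wt\alpha,\wt\beta,\wt\gamma\}$ from Theorem \ref{T:21}(4), I obtain $[P^\lambda\down_C : \Delta^{\wt\nu}] = n + d_{\wt\nu\Phi(\lambda)}$.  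Summing the three equalities gives $2c_{\alpha\lambda} = 3n + c_{\wt\alpha\Phi(\lambda)}$, which together with $c_{\alpha\lambda}+c_{\wt\alpha\Phi(\lambda)} = 3$ from Corollary \ref{C:c}(2) yields $n = c_{\alpha\lambda}-1$, proving parts (1) and (2).

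Parts (3) and (4) follow by the dual argument, with $\up^B$ replacing $\down_C$: the same Hom computation, using $L^\mu\down_C \cong L^{\Phi(\mu)}$ for $\mu \ne \alpha$ (Theorem \ref{T:21}(8)), shows that $P^{\Phi(\lambda)}\up^B \cong n' P^\alpha \oplus P^\lambda$ with $n' = [L^\alpha\down_C : L^{\Phi(\lambda)}]$, and the analogous Weyl-filtration comparison using $d_{\nu\alpha} = 1$ for $\nu \in \{\alpha,\beta,\gamma\}$ yields $2c_{\wt\alpha\Phi(\lambda)} = 3n' + c_{\alpha\lambda}$, hence $n' = c_{\wt\alpha\Phi(\lambda)} - 1$.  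The main delicate point is the exceptional edge cases $\lambda \in \{\beta,\gamma\}$ (so $\Phi(\lambda) \in \{\wt\gamma,\wt\beta\}$), in which $P^{\Phi(\lambda)}$ is itself exceptional and the individual numbers $d_{\wt\nu\Phi(\lambda)}$ are not directly controlled by Theorem \ref{T:21}; however, only their sum $c_{\wt\alpha\Phi(\lambda)}$ enters the final equation, so no case analysis is required.
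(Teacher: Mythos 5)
Your argument is correct, and it reaches the conclusion by a route that differs from the paper's in two respects. Writing $n=[L^{\wt{\alpha}}\up^B:L^\lambda]$, the three linear identities you extract --- $[P^\lambda\down_C:\Delta^{\wt{\alpha}}]=d_{\alpha\lambda}+d_{\beta\lambda}=n+d_{\wt{\alpha}\Phi(\lambda)}$ and its two companions --- coincide, via the reciprocity $[P^\lambda\down_C:\Delta^{\wt{\nu}}]=[\Delta^{\wt{\nu}}\up^B:L^\lambda]$, with the identities the paper works with (they also underlie Corollary \ref{C:easy}); but you package them through the direct-sum decomposition $P^\lambda\down_C\cong nP^{\wt{\alpha}}\oplus P^{\Phi(\lambda)}$, which rests on the standard (and in the paper unstated) facts that $\down_C$ and $\up^B$ are exact, biadjoint and preserve projectives, whereas the paper stays entirely at the level of the Grothendieck group and composition multiplicities. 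More substantively, you solve the system uniformly: summing the three equations and invoking $c_{\alpha\lambda}+c_{\wt{\alpha}\Phi(\lambda)}=3$ from Corollary \ref{C:c}(2) gives the closed formula $n=c_{\alpha\lambda}-1$ (and dually $n'=c_{\wt{\alpha}\Phi(\lambda)}-1$), while the paper runs a short case analysis over the possible configurations of $(d_{\alpha\lambda},d_{\beta\lambda},d_{\gamma\lambda})$ and sandwiches $n$ between two inequalities. Your version buys uniformity --- no cases, and as you observe the exceptional $\lambda\in\{\beta,\gamma\}$ need no separate treatment since only the sums $c_{\alpha\lambda}$ and $c_{\wt{\alpha}\Phi(\lambda)}$ enter --- at the cost of importing module-theoretic properties of the branching functors; the paper's version is more elementary in its inputs. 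One small point worth making explicit: Corollary \ref{C:c}(2) is stated under the hypothesis $c_{\alpha\lambda}\ne0$, which indeed holds in all four parts (for (3) and (4) via Theorem \ref{T:21}(6)).
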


\begin{proof}
(1) follows from Theorem \ref{T:21}(4,5,8) and Corollary \ref{C:decomp}.  For example, if $d_{\alpha\lambda} = d_{\beta\lambda} = 1$ and $d_{\gamma\lambda} = 0$, then $1 \leq [L^{\wt{\alpha}} \up^B : L^\lambda] \leq 2$ follows from the equation $[\Delta^{\wt{\alpha}} \up^{B}] = [\Delta^\alpha] + [\Delta^\beta]$ together with Theorem \ref{T:21}(4,8) and the fact that $d_{\wt{\alpha}\Phi(\lambda)} \leq 1$ (by Corollary \ref{C:decomp}), while $[L^{\wt{\alpha}} \up^B : L^\lambda] \leq 1$ follows from the equation $[\Delta^{\wt{\beta}} \up^{B}] = [\Delta^\alpha] + [\Delta^\gamma]$ together with Theorem \ref{T:21}(4).  Thus $[L^{\wt{\alpha}} \up^B : L^\lambda] = 1$

(2) follows from Theorem \ref{T:21}(4,5).  For example, when $d_{\alpha\lambda} = 1$ and $d_{\beta\lambda} = d_{\gamma\lambda} = 0$, then $[L^{\wt{\alpha}} \up^B : L^\lambda] = 0$ follows from $[\Delta^{\wt{\gamma}} \up^B] = [\Delta^{\beta}] + [\Delta^{\gamma}]$.

(3) and (4) follow using arguments analogous to those for (1) and (2) respectively.
\end{proof}

\begin{cor} \label{C:ll3}
$L^{\alpha} \down_{C}$ has a simple head and a simple socle both isomorphic to $L^{\wt{\alpha}}$, and a non-zero heart which is multiplicity-free; in particular, the radical length of $L^{\alpha} \down_{C}$ is $3$. Similarly, $L^{\wt{\alpha}} \up^B$ has a simple head and a simple socle both isomorphic to $L^{\alpha}$, and a non-zero heart which is multiplicity-free; in particular, the radical length of $L^{\wt{\alpha}} \up^{B}$ is $3$.
\end{cor}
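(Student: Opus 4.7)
The plan is to deduce the stated structure of $M := L^\alpha \down_C$ in four steps; the proof for $L^{\wt\alpha} \up^B$ is entirely parallel, obtained by swapping Theorem \ref{T:Bru}(2) with (3) and Proposition \ref{P:prelim}(3,4) with (1,2).

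First, for the head and socle, Theorem \ref{T:Bru}(2) with $k=1$ gives $\soc(M) = L^{\wt\alpha}$, which is simple. Since $L^\alpha$ is self-dual under the contravariant duality of $\mathcal{S}_n$, and this duality is intertwined with the one on $\mathcal{S}_{n-1}$ by the natural embedding $\mathcal{S}_{n-1}\hookrightarrow \mathcal{S}_n$ and preserved by block projection, $M$ is itself self-dual. Dualising then gives $\operatorname{head}(M) = L^{\wt\alpha}$, also simple. Combining with Theorem \ref{T:21}(7), which states $[M : L^{\wt\alpha}] = 2$, the head and socle account for both copies of $L^{\wt\alpha}$ in $M$, so the heart $\rad(M)/\soc(M)$ contains no copy of $L^{\wt\alpha}$.

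For multiplicity-freeness of the heart, fix $\lambda \ne \alpha$ in $B$. Corollary \ref{C:c}(1) gives $c_{\wt\alpha\Phi(\lambda)}\in\{0,1,2\}$, and Proposition \ref{P:prelim}(3,4) yields $[M : L^{\Phi(\lambda)}]\leq 1$, with equality only in the case $c_{\wt\alpha\Phi(\lambda)}=2$. Hence every constituent of the heart has multiplicity one.

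For the non-vanishing of the heart and the precise radical length, invoke $\Ext^1(L^{\wt\alpha}, L^{\wt\alpha}) = 0$, which follows from Theorem \ref{T:ext}(2) since $\sigma_e(\wt\alpha) = \sigma_e(\wt\alpha)$. A zero heart would force $M$ to be a self-extension of $L^{\wt\alpha}$ by itself, necessarily split; but $M \cong L^{\wt\alpha}\oplus L^{\wt\alpha}$ has a socle of length two, contradicting the first step. So the heart is non-zero, giving radical length $\geq 3$. For the reverse bound, the heart inherits self-duality from $M$, and any self-dual multiplicity-free module must be semi-simple: each constituent $L$ appears in both the head and socle of the heart by self-duality, and as $L$ has multiplicity one the composite $L\hookrightarrow (\text{heart})\twoheadrightarrow L$ is non-zero, hence an isomorphism, splitting $L$ off as a direct summand; iterating proves semi-simplicity. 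This forces $\rad^2(M)=\soc(M)$, so the radical length of $M$ equals $3$ exactly.

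The main obstacle is the upper bound on radical length: producing the lower bound $\geq 3$ uses only Brundan, self-duality, and the vanishing of $\Ext^1(L^{\wt\alpha},L^{\wt\alpha})$, but preventing the heart from having its own Loewy structure is not a consequence of the concrete tools alone. It is bridged by the abstract observation that self-duality together with multiplicity-freeness forces semi-simplicity, which marries the output of step one (self-duality of $M$) with that of step two (multiplicity-freeness of the heart).
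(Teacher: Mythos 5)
Your proposal is correct and follows essentially the same route as the paper: self-duality of $L^{\alpha}\down_C$ plus Theorem \ref{T:Bru} for the head and socle, Theorem \ref{T:21}(7) with Proposition \ref{P:prelim} and Corollary \ref{C:c} for multiplicity-freeness of the heart, the vanishing of $\Ext^1(L^{\wt{\alpha}},L^{\wt{\alpha}})$ for its non-vanishing, and the observation that a self-dual multiplicity-free module is semi-simple to pin the radical length at $3$. The only difference is that you spell out the last observation, which the paper takes for granted.
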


\begin{proof}
Since $L^{\alpha}$, and hence $L^{\alpha} \down_{C}$, is self-dual, we see that $L^{\alpha} \down_{C}$ has a simple head and a simple socle both isomorphic to $L^{\wt{\alpha}}$ by Theorems \ref{T:Bru} and \ref{T:21}(1).  Now, the heart $L^{\alpha} \down_{C}$ is self-dual (by the self-duality of $L^{\alpha} \down_{C}$), multiplicity-free (by Theorem \ref{T:21}(7), Proposition \ref{P:prelim} and Corollary \ref{C:c}) and non-zero (otherwise $L^{\wt{\alpha}}$ will self-extend, contradicting Theorem \ref{T:ext}(1)).  Hence it is semi-simple and has radical length $1$.  Thus $L^{\alpha} \down_{C}$ has radical length $3$.  This proves the first assertion.

The second assertion follows from an entirely analogous argument.
\end{proof}

\begin{prop} \label{P:equiv2}
Let $\lambda$ be a partition in $B$ such that $\lambda \ne \alpha$.  The following statements are equivalent:
\begin{enumerate}
\item $c_{\alpha\lambda} = 2$.
\item $[L^{\wt{\alpha}} \up^B : L^\lambda] \ne 0$.
\item $\Ext^1(L^\alpha,L^\lambda) \ne 0$.
\item $\Ext^1(L^{\wt{\alpha}},L^{\Phi(\lambda)}) = 0$ and $c_{\wt{\alpha}\Phi(\lambda)} \ne 0$.
\item $[L^\alpha \down_{C} : L^{\Phi(\lambda)}] = 0$ and $c_{\wt{\alpha}\Phi(\lambda)} \ne 0$.
\item $c_{\wt{\alpha}\Phi(\lambda)} = 1$.
\end{enumerate}
\end{prop}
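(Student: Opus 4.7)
The plan is to close a cycle of implications $(2)\Rightarrow(3)\Rightarrow(4)\Rightarrow(6)\Rightarrow(1)\Rightarrow(2)$, with $(5)\iff(6)$ as a side equivalence. Statements (1), (2), (5), (6) form a cluster of ``composition-multiplicity'' conditions; the real content is connecting them to the $\Ext$-conditions (3) and (4). Corollary \ref{C:c}(1) bounds $c_{\alpha\lambda}$ and $c_{\wt{\alpha}\Phi(\lambda)}$ by $2$, and Theorem \ref{T:21}(6) gives $c_{\alpha\lambda}=0\iff c_{\wt{\alpha}\Phi(\lambda)}=0$; combined with Corollary \ref{C:c}(2) (which forces their sum to be $3$ whenever nonzero), this yields $c_{\alpha\lambda}=2\iff c_{\wt{\alpha}\Phi(\lambda)}=1$, i.e.\ (1)$\iff$(6). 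Proposition \ref{P:prelim}(3,4) gives (5)$\iff$(6) at once. And Proposition \ref{P:prelim}(1,2), combined with the observation that $L^{\wt{\alpha}}\up^B$ is a quotient of $\Delta^{\wt{\alpha}}\up^B$ whose Grothendieck class is $[\Delta^\alpha]+[\Delta^\beta]$ by Theorem \ref{T:21}(5), so that $[L^{\wt{\alpha}}\up^B:L^\lambda]\le d_{\alpha\lambda}+d_{\beta\lambda}$ forces $[L^{\wt{\alpha}}\up^B:L^\lambda]=0$ whenever $c_{\alpha\lambda}=0$, completes (1)$\iff$(2).

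For (2)$\Rightarrow$(3) I would use Corollary \ref{C:ll3}. Under (2), $L^\lambda$ is a summand of the semi-simple multiplicity-free heart $H$ of the indecomposable module $L^{\wt{\alpha}}\up^B$. Quotienting by its socle together with the summands of $H$ other than $L^\lambda$ produces a length-two module with head $L^\alpha$ and socle $L^\lambda$, which must be non-split: a splitting would lift to a proper submodule of $L^{\wt{\alpha}}\up^B$ with head $L^\alpha$, contradicting both $[L^{\wt{\alpha}}\up^B:L^\alpha]=2$ (Theorem \ref{T:21}(7)), with one copy in the head and one in the socle, and the fact that $L^{\wt{\alpha}}\up^B$ is generated by its head. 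This gives $\Ext^1(L^\alpha,L^\lambda)\ne 0$.

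The crucial implication, and the one I expect to be the main obstacle, is (3)$\Rightarrow$(4), for which I propose a $\del$-parity argument. Self-duality of simple modules makes $\Ext^1$ symmetric, so Theorem \ref{T:ext}(2) applied to the lex-ordered pair $\{\alpha,\lambda\}$ together with (3) forces $d_{\alpha\lambda}^e(v)=v$ or $d_{\lambda\alpha}^e(v)=v$; in either case Theorem \ref{T:wt2vdecomp} gives $|\del\lambda-\del\alpha|=1$, so $\del\lambda\in\{\del\alpha-1,\del\alpha+1\}$. The identical analysis for $\Ext^1_C(L^{\wt{\alpha}},L^{\Phi(\lambda)})$, combined with $\del\Phi(\lambda)=\del\lambda$ from Theorem \ref{T:21}(2) and $\del\wt{\alpha}=\del\alpha+1$ from Theorem \ref{T:21}(3), would force $\del\lambda\in\{\del\alpha,\del\alpha+2\}$; these two sets are disjoint. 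Hence (3) forces $\Ext^1_C(L^{\wt{\alpha}},L^{\Phi(\lambda)})=0$, and combined with $c_{\wt{\alpha}\Phi(\lambda)}\ne 0$ (from $\Ext^1_B\ne 0\Rightarrow c_{\alpha\lambda}\ne 0$ and Theorem \ref{T:21}(6)) this yields (4). The cycle closes via (4)$\Rightarrow$(6): if $c_{\wt{\alpha}\Phi(\lambda)}=2$, then $L^{\Phi(\lambda)}$ sits inside the heart of $L^\alpha\down_C$ by Proposition \ref{P:prelim}(3), and the $C$-side analogue of the Loewy argument from (2)$\Rightarrow$(3) yields $\Ext^1_C(L^{\wt{\alpha}},L^{\Phi(\lambda)})\ne 0$, contradicting (4); so $c_{\wt{\alpha}\Phi(\lambda)}=1$.
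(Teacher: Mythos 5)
Your proposal is correct and takes essentially the same route as the paper: the paper also closes a cycle of implications using Proposition \ref{P:prelim} and Corollary \ref{C:c} for the multiplicity statements, Corollary \ref{C:ll3} for passing between composition multiplicities in the heart and $\Ext^1$, and for $(3)\Rightarrow(4)$ the same parity argument you give (phrased there via $\sigma_e(\lambda)=(-1)^{\del\lambda}$ rather than the explicit $\del$-values). The only differences are the ordering of the implications and the level of detail, not the ideas.
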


\begin{proof}
By Proposition \ref{P:prelim}, we see that (1) implies (2), and (5) implies (6).  By Corollary \ref{C:ll3}, we see that (2) implies (3), and (4) implies (5).  If (3) holds, then clearly $c_{\alpha\lambda} \ne 0$ so that $c_{\wt{\alpha}\Phi(\lambda)} \ne 0$ by Theorem \ref{T:21}(6); furthermore, $\sigma_e(\lambda) \ne \sigma_e(\alpha)$ by Theorem \ref{T:ext}, so that $\sigma_e(\Phi(\lambda)) = \sigma_e(\wt{\alpha})$ by Theorem \ref{T:21}(2,3) and hence $\Ext^1(L^{\wt{\alpha}},L^{\Phi(\lambda)}) = 0$ by Theorem \ref{T:ext}.  Finally, that (6) implies (1) follows from Corollary \ref{C:c}(2).
\end{proof}

Similarly, we also have the analogue of Proposition \ref{P:equiv2}.

\begin{prop} \label{P:equiv1}
Let $\lambda$ be a partition in $B$ such that $\lambda \ne \alpha$.  The following statements are equivalent:
\begin{enumerate}
\item $c_{\alpha\lambda} = 1$.
\item $[L^{\wt{\alpha}} \up^B : L^\lambda] = 0$ and $c_{\alpha\lambda} \ne 0$.
\item $\Ext^1(L^\alpha,L^\lambda) = 0$ and $c_{\alpha\lambda} \ne 0$.
\item $\Ext^1(L^{\wt{\alpha}},L^{\Phi(\lambda)}) \ne 0$.
\item $[L^\alpha \down_{C} : L^{\Phi(\lambda)}] \ne 0$.
\item $c_{\wt{\alpha}\Phi(\lambda)} = 2$.
\end{enumerate}
\end{prop}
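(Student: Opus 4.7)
The plan is to derive Proposition \ref{P:equiv1} as a formal consequence of Proposition \ref{P:equiv2} via the observation that, by Corollary \ref{C:c}(1), the integer $c_{\alpha\lambda}$ lies in $\{0,1,2\}$, and by Theorem \ref{T:21}(6), $c_{\alpha\lambda} \ne 0$ if and only if $c_{\wt{\alpha}\Phi(\lambda)} \ne 0$. Thus $c_{\alpha\lambda} = 1$ is equivalent to the conjunction ``$c_{\alpha\lambda} \ne 0$ and $c_{\alpha\lambda} \ne 2$''.

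First, I would verify that each of the conditions (2)--(6) already forces $c_{\alpha\lambda} \ne 0$. This is explicit in (2) and (3), while (6) trivially gives $c_{\wt{\alpha}\Phi(\lambda)} \ne 0$. Condition (4) yields $c_{\wt{\alpha}\Phi(\lambda)} \ne 0$ via Theorem \ref{T:ext}(2): a nonzero $\Ext^1$ forces some $v$-decomposition number between $\wt{\alpha}$ and $\Phi(\lambda)$ to equal $v$, hence (via Corollary \ref{C:decomp}) produces a common composition factor in some Weyl module. Condition (5) gives $c_{\wt{\alpha}\Phi(\lambda)} \ne 0$ by applying the exact functor $\down_{C}$ to the surjection $\Delta^\alpha \twoheadrightarrow L^\alpha$ and using Theorem \ref{T:21}(4,5) to identify $[\Delta^\alpha\down_{C} : L^{\Phi(\lambda)}]$ with $d_{\wt{\alpha}\Phi(\lambda)} + d_{\wt{\beta}\Phi(\lambda)}$. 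In both cases, Theorem \ref{T:21}(6) then transfers the information to $c_{\alpha\lambda}$.

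Next, once $c_{\alpha\lambda} \ne 0$ is in hand, each of (2)--(6) is precisely the negation of the correspondingly numbered condition in Proposition \ref{P:equiv2}: items (2), (3) and (6) are direct rewrites, while for items (4) and (5) of Proposition \ref{P:equiv2}, the conjunctive clause ``$c_{\wt{\alpha}\Phi(\lambda)} \ne 0$'' disappears under negation because it is already guaranteed by $c_{\alpha\lambda} \ne 0$. Combining, each of (2)--(6) of the present statement becomes equivalent to ``$c_{\alpha\lambda} \ne 0$ and $c_{\alpha\lambda} \ne 2$'' via Proposition \ref{P:equiv2}, hence to $c_{\alpha\lambda} = 1$ by the opening observation.

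The one step requiring a bit of care is the handling of the composite conditions (4) and (5) of Proposition \ref{P:equiv2} under negation: one must check that the ``$c_{\wt{\alpha}\Phi(\lambda)} \ne 0$'' clause of those conjunctions is absorbed by the other clause, so that the negations collapse exactly to the single statements (4) and (5) given here. Beyond this bookkeeping, the argument is entirely formal, relying only on Corollary \ref{C:c}, Theorem \ref{T:21}(4,5,6), Theorem \ref{T:ext}(2), and the already-established Proposition \ref{P:equiv2}.
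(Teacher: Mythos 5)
Your proof is correct. The paper itself gives no argument for this proposition, saying only that it is the ``analogue'' of Proposition \ref{P:equiv2}, which most naturally suggests re-running the same cyclic chain $(1)\Rightarrow(2)\Rightarrow\dotsb\Rightarrow(6)\Rightarrow(1)$ with the roles of $B$ and $C$ and of the inequalities adjusted, using Proposition \ref{P:prelim}, Corollary \ref{C:ll3}, Theorem \ref{T:ext} and Corollary \ref{C:c}(2) again. You instead deduce the statement formally from Proposition \ref{P:equiv2} by negation, using the trichotomy $c_{\alpha\lambda}\in\{0,1,2\}$ from Corollary \ref{C:c}(1) and the equivalence $c_{\alpha\lambda}\ne 0\iff c_{\wt{\alpha}\Phi(\lambda)}\ne 0$ from Theorem \ref{T:21}(6). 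This is cleaner in that it avoids redoing any homological work, and you correctly identify and discharge the only nontrivial obligation: that conditions (4) and (5), which carry no explicit nonvanishing clause, nevertheless force $c_{\wt{\alpha}\Phi(\lambda)}\ne 0$ --- for (4) via Theorem \ref{T:ext}(2) (a nonzero $\Ext^1$ gives a nonzero decomposition number relating $\wt{\alpha}$ and $\Phi(\lambda)$, hence a nonzero Cartan entry), and for (5) via exactness of restriction and Theorem \ref{T:21}(4,5). With that in hand, the absorption of the ``$c_{\wt{\alpha}\Phi(\lambda)}\ne 0$'' clauses under negation is exactly as you describe, and each of (2)--(6) collapses to ``$c_{\alpha\lambda}\ne 0$ and $c_{\alpha\lambda}\ne 2$'', i.e.\ to (1). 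Both routes are valid; yours has the small additional virtue of making explicit that Proposition \ref{P:equiv1} is a purely formal consequence of Proposition \ref{P:equiv2} together with Corollary \ref{C:c} and Theorem \ref{T:21}(6).
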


\section{Alvis-Curtis duality}

In view of Corollary \ref{C:decomp}, it is in principle possible to compute $e_{\lambda\mu}$ from our knowledge of $d_{\lambda\mu}$, and then use $d_{\lambda\mu}$ and $e_{\lambda\mu}$ to compute $a_{\lambda\mu}$.  However, our attempts to find a nice closed formula for $e_{\lambda\mu}$ fail, and we compute $a_{\lambda\mu}$ by a more roundabout way.

We continue to assume that $p\ne 2$.

\begin{prop} \label{P:main}
Suppose $B$ and $C$ are weight 2 blocks of $\q$-Schur algebras forming a $[2:k]$-pair.  Denote the conjugate blocks of $B$ and $C$ by $B'$ and $C'$ respectively.  Let $\lambda$ and $\mu$ be partitions in $B$ and $B'$ respectively, and write $\Phi = \Phi_{B,C}$ and $\Phi' = \Phi_{B',C'}$. Then
$$
a_{\lambda\mu} = a_{\Phi(\lambda) \Phi'(\mu)},
$$
unless $k = 1$, $\lambda = \alpha$ and $\mu \ne \gamma'$.
\end{prop}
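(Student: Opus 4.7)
The plan is to split into the cases $k \geq 2$ and $k = 1$. When $k \geq 2$ we have $w = 2 \leq k$, so Scopes equivalence applies: by Theorem \ref{T:Bru}(4), restriction and induction via $\Phi$ implement a Morita equivalence between $B$ and $C$. This gives $d^B_{\lambda\nu} = d^C_{\Phi(\lambda)\Phi(\nu)}$, and hence $e^B_{\lambda\nu} = e^C_{\Phi(\lambda)\Phi(\nu)}$, for all $\lambda, \nu \in B$. Moreover, in this regime $\Phi$ is literally the interchange of runners $i-1$ and $i$, so it commutes with conjugation: $\Phi(\nu)' = \Phi'(\nu')$. Plugging these equalities into $a_{\Phi(\lambda)\Phi'(\mu)} = \sum_{\rho \in C} e_{\Phi(\lambda)\rho} d_{\rho'\Phi'(\mu)}$ and changing variables to $\rho = \Phi(\nu)$ recovers $a_{\lambda\mu}$, settling this case.

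For $k = 1$ the approach is via the matrix identity $\mathbf{D}\mathbf{A} = \mathbf{P}\mathbf{D}$, which combined with Lemma \ref{L:basic}(1) gives the recursion
$$
a_{\lambda\mu} = d_{\lambda'\mu} - \sum_{\nu \dom \lambda} d_{\lambda\nu} a_{\nu\mu}
$$
and analogously on the $C$ side. I would proceed by strong induction, descending on $\lambda$ in dominance order. For non-exceptional $\lambda \in B$ (i.e.\ $\lambda \notin \{\alpha, \beta, \gamma\}$), Theorem \ref{T:21}(9) applied to both $(B, C)$ and $(B', C')$, together with the compatibility $\Phi(\lambda)' = \Phi'(\lambda')$ (which still holds since $\Phi$ moves a uniquely specified bead in this case), matches the two recursions term-by-term, and the inductive step yields $a_{\lambda\mu} = a_{\Phi(\lambda)\Phi'(\mu)}$.

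For the three exceptional partitions the compatibility breaks and Theorem \ref{T:21}(9) is replaced by Corollary \ref{C:easy}, which couples the rows of $D^B$ indexed by $\alpha, \beta, \gamma$ with the rows of $D^C$ indexed by $\wt\alpha, \wt\beta, \wt\gamma$ through a common value $c(\nu) := d^B_{\alpha\nu} + d^C_{\wt\gamma\Phi(\nu)}$. For $\lambda = \beta$ and $\lambda = \gamma$, careful bookkeeping with these three coupled equations and the recursion shows that the residuals $a_{\beta\mu} - a_{\Phi(\beta)\Phi'(\mu)}$ and $a_{\gamma\mu} - a_{\Phi(\gamma)\Phi'(\mu)}$ vanish, essentially because the exceptional row corresponding to $\lambda$ is balanced symmetrically by its counterpart on the $C$ side. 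The principal obstacle is $\lambda = \alpha$: the same reduction now leaves an unwound residual that I would compute explicitly using Corollary \ref{C:easy} and the already-established cases, and it should vanish precisely when $\mu = \gamma'$, accounting for the stated exception.
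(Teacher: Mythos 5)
Your $k\geq 2$ case is exactly the paper's argument and is fine. For $k=1$ your route --- the recursion $\sum_\nu d_{\lambda\nu}a_{\nu\mu}=d_{\lambda'\mu}$ from $\mathbf{D}\mathbf{A}=\mathbf{P}\mathbf{D}$ plus descending induction on dominance --- is genuinely different from the paper's, and the non-exceptional inductive step does close: since $d_{\lambda\alpha}=0$ for $\lambda\notin\{\alpha,\beta,\gamma\}$ by Theorem \ref{T:21}(4), the only residuals $a_{\nu\mu}-a_{\Phi(\nu)\Phi'(\mu)}$ entering that step are ones already known to vanish, and Theorem \ref{T:21}(9) applied to both $(B,C)$ and $(B',C')$ matches the two recursions term by term. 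The gap is in the exceptional cases, which you dismiss as ``careful bookkeeping.'' Concretely: (i) the pairing induced by $\Phi$ (namely $\beta\mapsto\wt{\gamma}$, $\gamma\mapsto\wt{\beta}$) is \emph{not} the pairing occurring in Corollary \ref{C:easy}, which couples row $\alpha$ of $\mathbf{D}^B$ with row $\wt{\gamma}$ of $\mathbf{D}^C$, row $\beta$ with row $\wt{\beta}$, and row $\gamma$ with row $\wt{\alpha}$; so when you subtract the recursion for $a_{\wt{\gamma}\Phi'(\mu)}$ from that for $a_{\beta\mu}$, the coefficient differences $d_{\beta\nu}-d_{\wt{\gamma}\Phi(\nu)}$ are not killed by Corollary \ref{C:easy} (which only yields $d_{\beta\nu}-d_{\wt{\gamma}\Phi(\nu)}=d_{\alpha\nu}-d_{\wt{\beta}\Phi(\nu)}$), nor do the right-hand sides $d_{\beta'\mu}$ and $d_{\wt{\gamma}'\Phi'(\mu)}$ cancel; (ii) the residuals at $\beta$ and $\gamma$ are coupled to each other (e.g.\ $d_{\gamma\beta}=1=d_{\wt{\gamma}\wt{\beta}}$) and to the generally nonzero residual at $\alpha$ (since $d_{\beta\alpha}=d_{\gamma\alpha}=1$), so the asserted vanishing is a nontrivial cancellation that would at least require the finer inputs of Corollary \ref{C:c}(2) and Lemmas \ref{L:commonB} and \ref{L:commonwtB}, none of which you invoke. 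As it stands the decisive step is asserted, not proved.

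For comparison, the paper avoids this entirely by working on the inverse side: it writes $a_{\lambda\mu}=\sum_\nu e_{\lambda\nu}d_{\nu'\mu}$, splits off the three exceptional indices, and uses Proposition \ref{P:maine}(2) (obtained by restricting $[L^\lambda]$ to $C$, a module-theoretic input your matrix recursion does not see) together with $e_{\lambda\alpha}+e_{\lambda\beta}+e_{\lambda\gamma}=0$ for $\lambda\neq\alpha$ (Lemma \ref{L:ez}); the latter identity makes the three exceptional contributions cancel in one line against the common value $r$ from Corollary \ref{C:easy}. Note also that the sub-case $\mu=\gamma'$ (which is the only case with $\lambda=\alpha$ that the proposition covers) is immediate from Theorem \ref{Mullineux}, since $\gamma'=m(\alpha)$ is $e$-regular and $m(\gamma')=\alpha$; there is no residual to compute there, and your statement that the residual at $\alpha$ vanishes ``precisely'' when $\mu=\gamma'$ claims more than the proposition asserts (it only claims equality when $\mu=\gamma'$, not failure otherwise).
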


First we note the following fact which we shall use: if $\nu$ is a partition in $B$, then, unless $k=1$ and $\nu \in \{ \alpha,\beta,\gamma\}$, the effect of $\Phi$ on $\nu$ as well as the effect of $\Phi'$ on $\nu'$ is merely to interchange two runners, so that $\Phi(\nu)' = \Phi'(\nu')$.

When $k\geq 2$, we have
\begin{alignat*}{2}
L^{\lambda} \down_C &= L^{\Phi(\lambda)}, \quad& L^{\Phi(\lambda)} \up^B &= L^\lambda, \\
[\Delta^{\lambda} \down_C] &= [\Delta^{\Phi(\lambda)}], \quad& [\Delta^{\Phi(\lambda)} \up^B] &= [\Delta^\lambda],
\end{alignat*}
for all partitions $\lambda$ in $B$.  This implies $d_{\lambda\rho} = d_{\Phi(\lambda)\Phi(\rho)}$ for all partitions $\lambda$ and $\rho$ in $B$, and hence $e_{\lambda\rho} = e_{\Phi(\lambda)\Phi(\rho)}$ for all partitions $\lambda$ and $\rho$ in $B$.  Similarly, $d_{\tau\mu} = d_{\Phi'(\tau)\Phi'(\mu)}$ for all partitions $\tau$ and $\mu$ of $B'$.  Proposition \ref{P:main} thus follows, since $a_{\lambda\mu} = \sum_{\nu} e_{\lambda\nu}d_{\nu'\mu}$, and $\Phi(\rho)' = \Phi'(\rho')$ for all partitions $\rho$ in $B$.

As such, to prove Proposition \ref{P:main}, it suffices to consider the case where $k=1$.  Thus, let $B$ and $C$ be two weight $2$ blocks of $\q$-Schur algebras forming a $[2:1]$-pair, and we keep all the notations introduced in the last section.

We begin with an easy Lemma.

\begin{lem} \label{L:ez}
Suppose $B$ and $C$ form a $[2:1]$-pair, and let $\lambda$ be a partition in $B$ and let $\wt{\mu}$ be a partition in $C$.  Then $e_{\lambda \alpha} + e_{\lambda \beta} + e_{\lambda\gamma} = \delta_{\lambda\alpha}$ and $e_{\wt{\mu}\wt{\alpha}} + e_{\wt{\mu}\wt{\beta}} + e_{\wt{\mu}\wt{\gamma}} = \delta_{\wt{\mu}\wt{\alpha}}$.
\end{lem}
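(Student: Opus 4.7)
The plan is to recognise Lemma \ref{L:ez} as an essentially immediate consequence of Theorem \ref{T:21}(4) together with the definition of $e_{\lambda\mu}$ as the entry of the inverse of the decomposition matrix. Recall that $e_{\lambda\mu}$ was defined so that $\sum_{\nu} e_{\lambda\nu} d_{\nu\mu} = \delta_{\lambda\mu}$, where the sum ranges over the partitions in the common block. Specialising to the column indexed by $\alpha$, we have
\[
\sum_{\nu} e_{\lambda\nu}\, d_{\nu\alpha} \;=\; \delta_{\lambda\alpha}.
\]
The point is that the $\alpha$-column of the decomposition matrix of $B$ is extremely sparse.

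First I would invoke Theorem \ref{T:21}(4) to read off that $d_{\nu\alpha} = 1$ if $\nu \in \{\alpha, \beta, \gamma\}$ and $d_{\nu\alpha} = 0$ otherwise. Substituting this directly into the displayed identity collapses the sum to the three terms corresponding to $\nu = \alpha, \beta, \gamma$, yielding
\[
e_{\lambda\alpha} + e_{\lambda\beta} + e_{\lambda\gamma} \;=\; \delta_{\lambda\alpha},
\]
which is the first claim. For the second claim I would repeat the argument verbatim inside the block $C$, using the second half of Theorem \ref{T:21}(4), namely that the $\wt{\alpha}$-column of the decomposition matrix of $C$ has $d_{\wt{\nu}\wt{\alpha}} = 1$ precisely when $\wt{\nu} \in \{\wt{\alpha}, \wt{\beta}, \wt{\gamma}\}$; this gives $e_{\wt{\mu}\wt{\alpha}} + e_{\wt{\mu}\wt{\beta}} + e_{\wt{\mu}\wt{\gamma}} = \delta_{\wt{\mu}\wt{\alpha}}$.

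There is no real obstacle here; the lemma is a one-line unwinding of the defining equation for $e_{\lambda\mu}$ once one knows that $\alpha$ (respectively $\wt{\alpha}$) has exactly three non-zero entries in its column of the decomposition matrix, each equal to $1$. The content of the lemma is therefore entirely in Theorem \ref{T:21}(4), and the proof is a matter of reading that theorem through the inverse-matrix identity.
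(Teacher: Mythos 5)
Your proof is correct and is exactly the argument the paper gives: substitute the sparse $\alpha$-column (respectively $\wt{\alpha}$-column) of the decomposition matrix, as described by Theorem \ref{T:21}(4), into the defining identity $\sum_{\nu} e_{\lambda\nu}\,d_{\nu\alpha} = \delta_{\lambda\alpha}$. Nothing further is needed.
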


\begin{proof}
We have $\sum_{\nu} e_{\lambda\nu}\,d_{\nu\alpha} = \delta_{\lambda\alpha}$ and $\sum_{\wt{\nu}} e_{\wt{\mu}\wt{\nu}}\,d_{\wt{\nu}\wt{\alpha}} = \delta_{\wt{\mu}\wt{\alpha}}$, so that this follows from Theorem \ref{T:21}(4).
\end{proof}

\begin{prop} \label{P:maine}
Suppose $B$ and $C$ form a $[2:1]$-pair, and let $\lambda$ and $\mu$ be partitions in $B$.
\begin{enumerate}
\item If $\lambda \ne \alpha$, and $\mu \notin \{ \alpha, \beta, \gamma \}$, then $e_{\lambda\mu} = e_{\Phi(\lambda)\Phi(\mu)}$.

\item If $\lambda \ne \alpha$, then
\begin{alignat*}{2}
e_{\lambda\alpha} &= e_{\Phi(\lambda)\wt{\alpha}} + e_{\Phi(\lambda)\wt{\beta}}, & \quad e_{\Phi(\lambda)\wt{\alpha}} &= e_{\lambda\alpha} + e_{\lambda\beta}; \\
e_{\lambda\beta} &= e_{\Phi(\lambda)\wt{\alpha}} + e_{\Phi(\lambda)\wt{\gamma}}, & \quad e_{\Phi(\lambda)\wt{\beta}} &= e_{\lambda\alpha} + e_{\lambda\gamma}; \\
e_{\lambda\gamma} &= e_{\Phi(\lambda)\wt{\beta}} + e_{\Phi(\lambda)\wt{\gamma}}, & \quad e_{\Phi(\lambda)\wt{\gamma}} &= e_{\lambda\beta} + e_{\lambda\gamma}.
\end{alignat*}

\item If $\mu \notin \{ \alpha, \beta, \gamma \}$, then
\begin{align*}
e_{\alpha\mu} &= \tfrac{1}{2} \left( e_{\wt{\alpha}\Phi(\mu)} - \sum_{\wt{\nu}} e_{\wt{\nu}\Phi(\mu)} \right); \\
e_{\wt{\alpha}\Phi(\mu)} &= \tfrac{1}{2} \left( e_{\alpha\mu} - \sum_{\nu} e_{\nu\mu} \right),
\end{align*}
where $\wt{\nu}$ and $\nu$ runs over all partitions satisfying $c_{\wt{\alpha}\wt{\nu}} = 1$ and $c_{\alpha\nu} = 1$ respectively.

\item $e_{\beta\alpha} = e_{\wt{\beta}\wt{\alpha}} = -1$, $e_{\gamma\alpha} = e_{\wt{\gamma}\wt{\alpha}} = 0$, $e_{\gamma\beta} = e_{\wt{\gamma}\wt{\beta}} = -1$.

\end{enumerate}
\end{prop}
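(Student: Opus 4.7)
The plan is to read each identity off the defining relation $\sum_\nu e_{\lambda\nu}d_{\nu\mu} = \delta_{\lambda\mu}$ (equivalently $\sum_\nu d_{\lambda\nu}e_{\nu\mu} = \delta_{\lambda\mu}$), using the structural description of the $[2:1]$-pair decomposition matrices collected in Theorem \ref{T:21}, Lemmas \ref{L:commonB}--\ref{L:commonwtB}, and Corollaries \ref{C:decomp}--\ref{C:ll3}. I would tackle the four parts in the order (4), (2), (3), (1).

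Part (4) is a direct computation on the exceptional $3\times 3$ block of $E_B$. For $e_{\beta\alpha}$ the relation $0 = \sum_\nu e_{\beta\nu}d_{\nu\alpha}$ collapses via Theorem \ref{T:21}(4) to $e_{\beta\alpha} + e_{\beta\beta} + e_{\beta\gamma}$; since $\gamma \not\trianglerighteq \beta$, Corollary \ref{C:basice} kills $e_{\beta\gamma}$, and $e_{\beta\beta}=1$ then gives $e_{\beta\alpha}=-1$. The same device applied to $0 = \sum_\nu e_{\gamma\nu}d_{\nu\alpha}$ yields $e_{\gamma\alpha} + e_{\gamma\beta} = -1$; to separate the two I would expand the dual identity $0 = \sum_\nu d_{\gamma\nu}e_{\nu\beta}$, using Corollary \ref{C:decomp} together with $d_{\gamma\beta}=1$ from Lemma \ref{L:commonB} to pin down the support of the row $d_{\gamma,-}$, and Corollary \ref{C:basice} to restrict the column $e_{-,\beta}$ to partitions dominated by $\beta$; careful analysis of the common support in the dominance interval between $\gamma$ and $\beta$ collapses the sum to $e_{\gamma\beta} + 1 = 0$, so $e_{\gamma\beta}=-1$ and $e_{\gamma\alpha}=0$. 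The three identities in $C$ follow by the symmetric argument.

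For parts (2) and (3) the cleanest route is verification by inverse. I would take the right-hand sides of the formulas in (2) and (3) as candidate values (expressed in terms of the already-known entries of $E_C$), and verify that the resulting matrix $f$ satisfies $\sum_\nu f_{\lambda\nu}d_{\nu\mu}=\delta_{\lambda\mu}$ for all relevant $\mu$ in $B$; uniqueness of the inverse then forces $f=e$. Lemma \ref{L:ez} supplies one such relation automatically, while the remaining linear identities among the $d_{\nu\sigma}$'s for $\sigma\in\{\alpha,\beta,\gamma\}$ and their $C$-counterparts come from combining the Weyl-restriction equalities of Theorem \ref{T:21}(5) with $L^\tau\down_C=L^{\Phi(\tau)}$ for $\tau\ne\alpha$ (Theorem \ref{T:21}(8)) and the head-heart-socle structure of $L^\alpha\down_C$ from Corollary \ref{C:ll3}. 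The coefficient $\tfrac12$ appearing in (3) is a direct reflection of $[L^\alpha\down_C:L^{\wt\alpha}]=2$ in Theorem \ref{T:21}(7).

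Part (1) falls out last. For non-exceptional $\lambda$ and $\mu$ I split $\delta_{\lambda\mu}=\sum_\nu e_{\lambda\nu}d_{\nu\mu}$ into the exceptional tail $\sum_{\sigma\in\{\alpha,\beta,\gamma\}}e_{\lambda\sigma}d_{\sigma\mu}$ and the non-exceptional part $\sum_{\nu\notin\{\alpha,\beta,\gamma\}}e_{\lambda\nu}d_{\nu\mu}$; Theorem \ref{T:21}(9) matches the second sum term-for-term with its $C$-counterpart under $\Phi$, and the identities in (2) combined with the $C$-analogues of Theorem \ref{T:21}(5) match the first, so $e_{\lambda\mu}$ and $e_{\Phi(\lambda)\Phi(\mu)}$ solve the same recursion and agree by downward induction in dominance. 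The principal obstacle is the bookkeeping around the $\alpha$ (and $\wt\alpha$) row and column: the failure of $\Phi$ to be a Scopes equivalence when $k=1$ is precisely what forces $L^\alpha\down_C$ to have three radical layers rather than one, and the two extra composition factors this produces are exactly what inject the $\tfrac12$ into (3) and create the essential asymmetry between $E_B$ and $E_C$.
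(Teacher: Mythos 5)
Your overall strategy --- reading the identities off $\sum_\nu e_{\lambda\nu}d_{\nu\mu}=\delta_{\lambda\mu}$ and the induction/restriction identities of Theorem \ref{T:21} --- is essentially the transpose of what the paper does: the paper restricts $[L^\lambda]=\sum_\rho e_{\lambda\rho}[\Delta^\rho]$ to $C$, expands both sides in the Weyl basis of $C$ using Theorem \ref{T:21}(5),(7),(8) and Corollary \ref{C:ll3}, and compares coefficients, whereas you multiply a candidate inverse row by the decomposition matrix and invoke uniqueness. Your verification does go through for (1), (2) and (3): after regrouping, the exceptional contribution differs from $\sum_{\wt\sigma}e_{\Phi(\lambda)\wt\sigma}d_{\wt\sigma\Phi(\mu)}$ by exactly $(e_{\Phi(\lambda)\wt\alpha}+e_{\Phi(\lambda)\wt\beta}+e_{\Phi(\lambda)\wt\gamma})\,[L^{\wt\alpha}\up^B:L^\mu]$, which Lemma \ref{L:ez} kills, and the factor $\tfrac12$ in (3) does come from $[L^\alpha\down_C:L^{\wt\alpha}]=2$ as you say. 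One organisational caveat: uniqueness of the inverse applies to a whole row, so (1) and (2) must be established simultaneously, not (2) before (1); as written your ordering $(4),(2),(3),(1)$ cannot be executed literally.

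The genuine gap is in part (4), in separating $e_{\gamma\alpha}$ from $e_{\gamma\beta}$. Your computation of $e_{\beta\alpha}=-1$ is fine ($e_{\beta\gamma}=0$ since $\gamma\not\domeq\beta$, then Lemma \ref{L:ez}), and so is $e_{\gamma\alpha}+e_{\gamma\beta}=-1$. But the proposed expansion of $0=\sum_\nu d_{\gamma\nu}e_{\nu\beta}$ does \emph{not} visibly collapse to $e_{\gamma\beta}+1=0$: the leftover sum $\sum_{\nu\notin\{\beta,\gamma\}}d_{\gamma\nu}e_{\nu\beta}$ runs over $\nu$ with $\beta\dom\nu\dom\gamma$, and by Corollary \ref{C:decomp} there can be such $\nu$ with $\del\nu=\del\beta$ and $d_{\gamma\nu}=1$; for these the entries $e_{\nu\beta}$ are precisely the quantities for which the paper admits no direct formula exists, and there is no a priori reason for them to vanish (compare $e_{\beta\alpha}=-1$ for $\beta$ strictly between $\alpha$ and $m(\alpha)'$). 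So ``careful analysis of the common support'' is an assertion, not an argument. The paper circumvents this entirely: it first proves (2), then applies the left-hand identity of (2) with $\lambda=\gamma$ (so $\Phi(\gamma)=\wt\beta$) to get $e_{\gamma\alpha}=e_{\wt\beta\wt\alpha}+e_{\wt\beta\wt\beta}=-1+1=0$, and then $e_{\gamma\beta}=-1$ from Lemma \ref{L:ez}. If you reorder your proof so that (1)--(2) precede (4), the problematic step disappears.
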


\begin{proof}
We have $[L^\lambda] = \sum_{\rho} e_{\lambda \rho} [\Delta^\rho]$.  We restrict both sides of this equation to the block $C$.  On the right-hand side, we have
$$ (e_{\lambda \alpha} + e_{\lambda \beta})[\Delta^{\wt{\alpha}}] +
(e_{\lambda \alpha} + e_{\lambda \gamma})[\Delta^{\wt{\beta}}] +
(e_{\lambda \beta} + e_{\lambda \gamma})[\Delta^{\wt{\gamma}}] +
\sum_{\rho \notin \{\alpha,\beta, \gamma\}} e_{\lambda\rho}[\Delta^{\Phi(\rho)}].
$$
by Theorem \ref{T:21}(5).

If $\lambda \ne \alpha$, then on the left-hand side, we have $[L^{\lambda} \down_{C}] = [L^{\Phi(\lambda)}] = \sum_{\rho} e_{\Phi(\lambda)\Phi(\rho)} [\Delta^{\Phi(\rho)}]$ by Theorem \ref{T:21}(8).  Equating the coefficients of $[\Delta^{\Phi(\mu)}]$ on both sides gives (1) and the three equations on the right in (2).  The three equations on the left in (2) then follow from Lemma \ref{L:ez}.

If $\lambda = \alpha$, then on the left-hand side, we have
\begin{align*}
[L^{\alpha} \down_C] &= 2[L^{\wt{\alpha}}] + \sum_{\nu} [L^{\Phi(\nu)}] \\
&= \sum_{\rho} (2e_{\wt{\alpha}\Phi(\rho)} + \sum_{\nu} e_{\Phi(\nu)\Phi(\rho)})[\Delta^{\Phi(\rho)}],
\end{align*}
where $\nu$ runs over all partitions in $B$ satisfying $\nu \ne \alpha$ and $[L^\alpha \down_{C}: L^{\Phi(\nu)}] \ne 0$, by Theorem \ref{T:21}(7) and Corollary \ref{C:ll3}.  Equating the coefficients of $[\Delta^{\Phi(\mu)}]$ ($\mu \notin \{ \alpha, \beta, \gamma \}$) on both sides, we get
$$
  2e_{\wt{\alpha}\Phi(\mu)} + \sum_{\nu} e_{\Phi(\nu)\Phi(\mu)} = e_{\alpha\mu}.
$$
Using (1) and Proposition \ref{P:equiv1}, this yields the second assertion of (3).  The first assertion of (3) follows from an entirely analogous argument.

The first assertion of (4) follows from Lemma \ref{L:ez} and Corollary \ref{C:basice}.  The second assertion then follows from the first assertion and part (2) (and Theorem \ref{T:21}(1)). The third assertion now follows from the second and Lemma \ref{L:ez}.
\end{proof}

\begin{proof}[Proof of Proposition \ref{P:main}]
As mentioned earlier, we may assume $k=1$.  If $\mu = \gamma'$, then $a_{\lambda\mu} = \delta_{\lambda m(\mu)} = \delta_{\lambda \alpha}$ and $a_{\Phi(\lambda)\Phi'(\mu)} = \delta_{\Phi(\lambda) m(\Phi'(\mu))} = \delta_{\Phi(\lambda)\wt{\alpha}}$ by Theorems \ref{Mullineux} and \ref{T:Bru}(1), so that $a_{\lambda\mu} = a_{\Phi(\lambda) \Phi'(\mu)}$.

Thus, we may further assume that $\mu \ne \gamma'$, and hence that $\lambda \ne \alpha$ as well. We have
{\allowdisplaybreaks
\begin{align*}
a_{\lambda\mu} &= \sum_{\nu} e_{\lambda\nu}\,d_{\nu'\mu} \\
&= e_{\lambda\alpha}\,d_{\alpha'\mu} + e_{\lambda\beta}\,d_{\beta'\mu} + e_{\lambda\gamma}\,d_{\gamma'\mu} + \sum_{\nu \notin \{\alpha,\beta,\gamma\}} e_{\lambda\nu}\,d_{\nu'\mu} \\
&= e_{\lambda\alpha}\,d_{\alpha'\mu} + e_{\lambda\beta}\,d_{\beta'\mu} + e_{\lambda\gamma}\,d_{\gamma'\mu} + \sum_{\wt{\nu} \notin \{\wt{\alpha},\wt{\beta},\wt{\gamma}\}} e_{\Phi(\lambda)\wt{\nu}}\,d_{\wt{\nu}'\Phi'(\mu)} \\
&= a_{\Phi(\lambda)\Phi'(\mu)} + e_{\lambda\alpha}\,d_{\alpha'\mu} + e_{\lambda\beta}\,d_{\beta'\mu} + e_{\lambda\gamma}\,d_{\gamma'\mu} - \\
& \qquad
(e_{\Phi(\lambda)\wt{\alpha}}\,d_{\wt{\alpha}'\Phi'(\mu)} + e_{\Phi(\lambda)\wt{\beta}}\,d_{\wt{\beta}'\Phi'(\mu)} + e_{\Phi(\lambda)\wt{\gamma}}\,d_{\wt{\gamma}'\Phi'(\mu)}) \\
&= a_{\Phi(\lambda)\Phi'(\mu)} + e_{\lambda\alpha} (d_{\alpha'\mu} - d_{\wt{\alpha}'\Phi'(\mu)} - d_{\wt{\beta}'\Phi'(\mu)} ) +\\
& \qquad
e_{\lambda\beta} (d_{\beta'\mu} - d_{\wt{\alpha}'\Phi'(\mu)} - d_{\wt{\gamma}'\Phi'(\mu)}) + e_{\lambda\gamma} (d_{\gamma'\mu} - d_{\wt{\beta}'\Phi(\mu)} - d_{\wt{\gamma}'\Phi(\mu)}) \\
&= a_{\Phi(\lambda)\Phi'(\mu)} + e_{\lambda\alpha} (d_{\alpha'\mu}+d_{\wt{\gamma'}\Phi'(\mu)} - c_{\wt{\gamma}'\Phi'(\mu)}) +\\
& \qquad
e_{\lambda\beta} (d_{\beta'\mu}+d_{\wt{\beta}'\Phi'(\mu)} - c_{\wt{\gamma}'\Phi'(\mu)}) + e_{\lambda\gamma} (d_{\gamma'\mu}+d_{\wt{\alpha}'\Phi'(\mu)} - c_{\wt{\gamma}'\Phi'(\mu)}) \\
&= a_{\Phi(\lambda)\Phi'(\mu)} + (e_{\lambda\alpha} + e_{\lambda\beta} + e_{\lambda\gamma})(r - c_{\wt{\gamma}'\Phi'(\mu)}) \\
&= a_{\Phi(\lambda)\Phi'(\mu)}.
\end{align*}
}
Here, the third equality follows from Proposition \ref{P:maine}(1) and Theorem \ref{T:21}(9) (and the fact that $\Phi(\nu)' = \Phi'(\nu')$ for all partitions $\nu \notin \{\alpha,\beta,\gamma\}$ in $B$), the fifth from Proposition \ref{P:maine}(2) and the sixth from Theorem \ref{T:21}(4); the penultimate equality follows since $d_{\alpha'\mu} + d_{\wt{\gamma}'\Phi'(\mu)} = d_{\beta'\mu} + d_{\wt{\beta}'\Phi'(\mu)} =d_{\gamma'\mu} + d_{\wt{\alpha}'\Phi'(\mu)}$ by Corollary \ref{C:easy}, and we write this common quantity as $r$, while the final equality follows from Lemma \ref{L:ez}.
\end{proof}

We are left to address how $a_{\alpha\mu}$ with $\mu \ne \gamma'$ changes through a $[2:1]$-pair.

\begin{lem} \label{L:simple}
If $c_{\alpha\nu} = 1$ and $\nu \ne \gamma$, then $e_{\nu\alpha} = e_{\nu \beta} = e_{\nu \gamma} = 0$.  If $c_{\wt{\alpha}\wt{\nu}} = 1$ and $\wt{\nu} \ne \wt{\gamma}$, then $e_{\wt{\nu} \wt{\alpha}} = e_{\wt{\nu}\wt{\beta}} = e_{\wt{\nu}\wt{\gamma}} = 0$.
\end{lem}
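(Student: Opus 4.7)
The plan is to prove the first assertion; the second follows by an entirely analogous argument, with the roles of $B$ and $C$ swapped and Proposition \ref{P:equiv2}(3) playing the role of Proposition \ref{P:equiv1}(4). The first step is to check that under the hypothesis $c_{\alpha\nu} = 1$ and $\nu \ne \gamma$, one has $\nu \notin \{\alpha, \beta, \gamma\}$. A direct computation using Theorem \ref{T:21}(4), Corollary \ref{C:decomp}, and Lemma \ref{L:commonB} (which gives $d_{\gamma\beta} = 1$) yields $c_{\alpha\alpha} = 3$, $c_{\alpha\beta} = 2$, $c_{\alpha\gamma} = 1$, so $\gamma$ is the only exceptional partition satisfying $c_{\alpha\nu} = 1$, and it is excluded. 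Since $c_{\alpha\nu} = d_{\alpha\nu} + d_{\beta\nu} + d_{\gamma\nu} = 1$ by Theorem \ref{T:21}(4), and these decomposition numbers are $0$ or $1$ by Corollary \ref{C:decomp}, exactly one of them equals $1$, which splits the analysis into three cases.

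The key input is a parity constraint on $\nu$. By Proposition \ref{P:equiv1}(4), $\Ext^1(L^{\wt{\alpha}}, L^{\Phi(\nu)}) \ne 0$, so Theorem \ref{T:ext}(2) forces $\sigma_e(\wt{\alpha}) \ne \sigma_e(\Phi(\nu))$; combining this with $\del\wt{\alpha} = \del\alpha + 1$ and $\del\Phi(\nu) = \del\nu$ from Theorem \ref{T:21}(2,3) yields $\del\nu \equiv \del\alpha \pmod 2$. I would then use this to eliminate the case $d_{\gamma\nu} = 1$: by Theorem \ref{T:wt2vdecomp} and the fact that $\nu \notin \{\alpha, \gamma\}$, one must have $d_{\gamma\nu}(v) = v$, forcing $\sigma_e(\gamma) \ne \sigma_e(\nu)$ by Theorem \ref{T:parity}, i.e.\ $\del\alpha \not\equiv \del\nu \pmod 2$, which contradicts the parity.

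For the remaining two cases, the conclusion falls out cleanly. If $d_{\alpha\nu} = 1$, the parity constraint forces $d_{\alpha\nu}(v) = v^2$, so $\alpha = m(\nu)'$ by Theorem \ref{T:wt2vdecomp}, and Lemma \ref{L:basic}(2) yields $\nu \dom \alpha \dom \beta \dom \gamma$; Corollary \ref{C:basice} then kills all three of $e_{\nu\alpha}, e_{\nu\beta}, e_{\nu\gamma}$ by dominance. If $d_{\beta\nu} = 1$, the parity constraint forces $d_{\beta\nu}(v) = v$, so $\nu \dom \beta \dom \gamma$ by Theorem \ref{T:wt2vdecomp}, giving $e_{\nu\beta} = e_{\nu\gamma} = 0$ by Corollary \ref{C:basice}, after which Lemma \ref{L:ez} yields $e_{\nu\alpha} = 0$ as well. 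The main obstacle will be this last case, where $\alpha$ may dominate $\nu$ so that the vanishing of $e_{\nu\alpha}$ cannot be read off dominance alone; Lemma \ref{L:ez} is the indispensable global input that closes the argument.
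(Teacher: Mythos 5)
Your proof is correct. The overall skeleton matches the paper's: split according to which of $d_{\alpha\nu}$, $d_{\beta\nu}$, $d_{\gamma\nu}$ equals $1$, rule out the $\gamma$ case, deduce strict dominance over $\beta$ in the surviving cases, kill $e_{\nu\beta}$ and $e_{\nu\gamma}$ by Corollary \ref{C:basice}, and finish with Lemma \ref{L:ez}. The one place where you genuinely diverge is the exclusion of $d_{\gamma\nu}=1$: the paper transfers the problem to the block $C$ via Corollary \ref{C:c}(2) (obtaining $d_{\wt{\beta}\Phi(\nu)} = d_{\wt{\gamma}\Phi(\nu)} = 1$) and invokes Lemma \ref{L:commonwtB} to force $\Phi(\nu) = \wt{\beta}$, i.e.\ $\nu = \gamma$; you instead extract the parity constraint $\sigma_e(\Phi(\nu)) \ne \sigma_e(\wt{\alpha})$, hence $\del\nu \equiv \del\alpha \pmod 2$, from Proposition \ref{P:equiv1}(4) and Theorem \ref{T:ext}(2), which is incompatible with $|\del\gamma - \del\nu| = 1$. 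Both are valid; yours leans on the Ext-quiver description (and on Proposition \ref{P:equiv1}, which the paper states by analogy without separate proof), whereas the paper's uses only decomposition-number combinatorics across the $[2:1]$-pair. A minor remark: the parity constraint is superfluous in the two surviving cases, since $d_{\alpha\nu}=1$ (resp.\ $d_{\beta\nu}=1$) together with $\nu \ne \alpha$ (resp.\ $\nu \ne \beta$) already yields $\nu \dom \alpha$ (resp.\ $\nu \dom \beta$) directly from Lemma \ref{L:basic}(1), so the detour through $d_{\alpha\nu}(v)=v^2$ and $d_{\beta\nu}(v)=v$ is not needed there.
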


\begin{proof}
If $c_{\alpha\nu} = 1 = d_{\gamma\nu}$, then $d_{\wt{\alpha}\Phi(\nu)} = 0$ and $d_{\wt{\beta}\Phi(\nu)} = d_{\wt{\gamma}\Phi(\nu)} = 1$ by Corollary \ref{C:c}(2), so that $\Phi(\nu) = \wt{\beta}$ by Lemma \ref{L:commonwtB}, and hence $\nu = \gamma$ by Theorem \ref{T:21}(1).  Thus, if $c_{\alpha\nu} = 1$ and $\nu \ne \gamma$, then either $d_{\alpha\nu} = 1$ or $d_{\beta\nu} = 1$.  In both cases, we have $\nu \dom \beta$.  Thus $e_{\nu\beta} = 0 = e_{\nu\gamma}$ by Corollary \ref{C:basice}.  That $e_{\nu\alpha} = 0$ now follows from Lemma \ref{L:ez}.  An analogous argument applies to the second assertion.
\end{proof}

\begin{prop} \label{P:alpha}
Suppose $B$ and $C$ form a $[2:1]$-pair, and let $\mu$ be a partition in $B'$ such that $\mu \ne \gamma'$.  Then
\begin{align*}
a_{\alpha\mu} &= \lceil \tfrac{1}{2} (a_{\wt{\alpha}\Phi'(\mu)} - \sum_{\wt{\nu}} a_{\wt{\nu}\Phi'(\mu)} ) \rceil; \\
a_{\wt{\alpha}\Phi'(\mu)} &= \lceil \tfrac{1}{2} (a_{\alpha\mu} - \sum_{\nu} a_{\nu\mu} ) \rceil,
\end{align*}
where $\wt{\nu}$ and $\nu$ run over all partitions satisfying $c_{\wt{\alpha}\wt{\nu}} = 1$ and $c_{\alpha\nu} = 1$ respectively.
\end{prop}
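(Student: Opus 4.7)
The plan is to expand $a_{\alpha\mu} = \sum_{\nu} e_{\alpha\nu}\,d_{\nu'\mu}$ and transport each piece from block $B$ to block $C$ via Proposition \ref{P:maine}(3,4), Theorem \ref{T:21}(9) and Lemma \ref{L:simple}; the ceiling will emerge from a parity correction controlled by Corollary \ref{C:easy} applied to the $[2{:}1]$-pair $(B',C')$.

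First, Corollary \ref{C:basice} together with $\alpha \dom \beta \dom \gamma$ gives $e_{\alpha\alpha} = 1$ and $e_{\alpha\beta} = e_{\alpha\gamma} = 0$. For any $\nu \notin \{\alpha,\beta,\gamma\}$ the map $\Phi$ merely interchanges two runners, so $\Phi(\nu)' = \Phi'(\nu')$, and Theorem \ref{T:21}(9) applied to $(B',C')$ yields $d_{\nu'\mu} = d_{\Phi(\nu)'\Phi'(\mu)}$. Substituting Proposition \ref{P:maine}(3) for $e_{\alpha\nu}$ and setting $\wt{\nu} = \Phi(\nu)$,
\begin{align*}
a_{\alpha\mu} = d_{\alpha'\mu} + \tfrac{1}{2}\sum_{\wt{\nu}\notin\{\wt{\alpha},\wt{\beta},\wt{\gamma}\}}\Bigl(e_{\wt{\alpha}\wt{\nu}} - \sum_{\wt{\rho}} e_{\wt{\rho}\wt{\nu}}\Bigr) d_{\wt{\nu}'\Phi'(\mu)}.
\end{align*}
I then reassemble the inner sum on $\wt{\nu}$ into $a$-values by restoring the missing $\wt{\nu} \in \{\wt{\alpha},\wt{\beta},\wt{\gamma}\}$. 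Using $e_{\wt{\alpha}\wt{\alpha}} = 1$ and $e_{\wt{\alpha}\wt{\beta}} = e_{\wt{\alpha}\wt{\gamma}} = 0$, the first piece contributes $a_{\wt{\alpha}\Phi'(\mu)} - d_{\wt{\alpha}'\Phi'(\mu)}$. For each $\wt{\rho}$ with $c_{\wt{\alpha}\wt{\rho}} = 1$ and $\wt{\rho} \ne \wt{\gamma}$, Lemma \ref{L:simple} annihilates the correction terms and the sum collapses to $a_{\wt{\rho}\Phi'(\mu)}$. The remaining case $\wt{\rho} = \wt{\gamma}$ (which occurs since $c_{\wt{\alpha}\wt{\gamma}} = 1$) is handled by Proposition \ref{P:maine}(4), which supplies $e_{\wt{\gamma}\wt{\alpha}} = 0$, $e_{\wt{\gamma}\wt{\beta}} = -1$, $e_{\wt{\gamma}\wt{\gamma}} = 1$ and hence an extra $d_{\wt{\beta}'\Phi'(\mu)} - d_{\wt{\gamma}'\Phi'(\mu)}$. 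Assembling everything,
\begin{align*}
a_{\alpha\mu} = \tfrac{1}{2}\Bigl[a_{\wt{\alpha}\Phi'(\mu)} - \sum_{\wt{\rho}} a_{\wt{\rho}\Phi'(\mu)}\Bigr] + E_{\mu},
\end{align*}
where $E_{\mu} = d_{\alpha'\mu} - \tfrac{1}{2}\bigl(d_{\wt{\alpha}'\Phi'(\mu)} + d_{\wt{\beta}'\Phi'(\mu)} - d_{\wt{\gamma}'\Phi'(\mu)}\bigr)$.

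Finally, I evaluate $E_{\mu}$ via Corollary \ref{C:easy} applied to $(B',C')$, where the exceptional roles are played by $\gamma',\beta',\alpha'$ and $\wt{\gamma}',\wt{\beta}',\wt{\alpha}'$. The common value $r = d_{\alpha'\mu} + d_{\wt{\gamma}'\Phi'(\mu)} = d_{\beta'\mu} + d_{\wt{\beta}'\Phi'(\mu)} = d_{\gamma'\mu} + d_{\wt{\alpha}'\Phi'(\mu)}$ satisfies $c_{\gamma'\mu} + c_{\wt{\gamma}'\Phi'(\mu)} = 3r$, and Corollary \ref{C:c}(1) then forces $r \in \{0,1\}$ with $c_{\gamma'\mu} \in \{1,2\}$ in the nontrivial case. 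A direct substitution gives $E_{\mu} = 0$ when $r = 0$ and $E_{\mu} = \tfrac{1}{2}(c_{\gamma'\mu} - 1) \in \{0,\tfrac{1}{2}\}$ when $r = 1$. Since $a_{\alpha\mu}$ is an integer, the parity of the bracketed quantity must match $2E_{\mu}$, and this is exactly what makes $a_{\alpha\mu} = \lceil \tfrac{1}{2}(a_{\wt{\alpha}\Phi'(\mu)} - \sum_{\wt{\rho}} a_{\wt{\rho}\Phi'(\mu)}) \rceil$. The second formula follows by the symmetric argument with $B$ and $C$ interchanged. The main obstacle is the careful bookkeeping for the three exceptional partitions --- in particular, ensuring that the residual $d$-terms produced by Proposition \ref{P:maine}(4) and by the truncation at $\{\wt{\alpha},\wt{\beta},\wt{\gamma}\}$ combine cleanly into the linear relations of Corollary \ref{C:easy}.
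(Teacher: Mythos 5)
Your proof is correct and follows essentially the same route as the paper: expand $a_{\alpha\mu}=\sum_\nu e_{\alpha\nu}d_{\nu'\mu}$, transport via Proposition \ref{P:maine}(3,4), Theorem \ref{T:21}(9) and Lemma \ref{L:simple} to reach $a_{\alpha\mu}=\tfrac12\bigl(a_{\wt{\alpha}\Phi'(\mu)}-\sum_{\wt{\nu}}a_{\wt{\nu}\Phi'(\mu)}\bigr)+E_\mu$ with the same residual term $E_\mu$, and then resolve $E_\mu\in\{0,\tfrac12\}$ by the linear relations of Corollary \ref{C:easy} and the bound of Corollary \ref{C:c}. Your bookkeeping of the exceptional terms and the case analysis (organized by $r$ and $c_{\gamma'\mu}$ rather than by $c_{\wt{\gamma}'\Phi'(\mu)}$) matches the paper's computation in substance.
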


\begin{proof}
We have
\begin{align*}
a_{\alpha\mu}&= d_{\alpha'\mu} + \sum_{\rho \notin \{\alpha,\beta,\gamma\}} e_{\alpha\rho}d_{\rho'\mu} \\
&= d_{\alpha'\mu} + \sum_{\wt{\rho} \notin \{\wt{\alpha},\wt{\beta},\wt{\gamma} \}} \tfrac{1}{2} \left( e_{\wt{\alpha}\wt{\rho}} - \sum_{\wt{\nu}} e_{\wt{\nu}\wt{\rho}} \right) d_{\wt{\rho}'\Phi'(\mu)} \\
&= d_{\alpha'\mu} + \tfrac{1}{2}(-d_{\wt{\alpha}'\Phi'(\mu)} - d_{\wt{\beta}'\Phi'(\mu)} + d_{\wt{\gamma}'\Phi'(\mu)} + a_{\wt{\alpha}\Phi'(\mu)} - \sum_{\wt{\nu}} a_{\wt{\nu}\Phi'(\mu)} ),
\end{align*}
where $\wt{\nu}$ runs over all partitions satisfying $c_{\wt{\alpha}\wt{\nu}} = 1$. Here, the first equality follows from Corollary \ref{C:basice}, the second from Proposition \ref{P:maine}(3) and Theorem \ref{T:21}(9) (and that $\Phi(\rho)' = \Phi'(\rho')$ for all partitions $\rho \notin \{\alpha,\beta,\gamma\}$ in $B$), and the last from Corollary \ref{C:basice}, Proposition \ref{P:maine}(4) and Lemma \ref{L:simple}. If $c_{\gamma'\mu}= 0$, then $a_{\alpha\mu} = \frac{1}{2}(a_{\wt{\alpha}\Phi'(\mu)} - \sum_{\wt{\nu}} a_{\wt{\nu}\Phi'(\mu)})$ by Theorem \ref{T:21}(4,6).  If $c_{\gamma'\mu} \ne 0$, then $d_{\alpha'\mu} = 1 - d_{\wt{\gamma}'\Phi'(\mu)}$ by Corollary \ref{C:c}(2), so that
$$
a_{\alpha\mu} =
\begin{cases}
\frac{1}{2}(a_{\wt{\alpha}\Phi'(\mu)} - \sum_{\wt{\nu}} a_{\wt{\nu}\Phi'(\mu)}),
&\text{if } c_{\wt{\gamma}'\Phi'(\mu)} = 2; \\[3pt]
\frac{1}{2}(1+a_{\wt{\alpha}\Phi'(\mu)} - \sum_{\wt{\nu}} a_{\wt{\nu}\Phi'(\mu)}),
&\text{if } c_{\wt{\gamma}'\Phi'(\mu)} = 1.
\end{cases}
$$
Since $a_{\alpha\mu}$ is necessarily an integer, the proof of the first assertion is complete.  The second assertion follows from an entirely analogous argument.
\end{proof}

\begin{cor}[of proof]
Suppose $B$ and $C$ form a $[2:1]$-pair, and let $\mu$ be a partition in $B'$ such that $\mu \ne \gamma'$.  Then $a_{\alpha\mu} - \sum_{\nu} a_{\nu\mu}$, where $\nu$ runs over all partitions such that $c_{\alpha\nu} = 1$, is odd if and only if $c_{\gamma'\mu} = 1$, and $a_{\wt{\alpha}\Phi(\mu)} - \sum_{\wt{\nu}} a_{\wt{\nu}\Phi(\mu)}$, where $\wt{\nu}$ runs over all partitions such that $c_{\wt{\alpha}\wt{\nu}} = 1$, is odd if and only if $c_{\wt{\gamma}'\Phi(\mu)} = 1$.
\end{cor}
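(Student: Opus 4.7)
The plan is to read the statement off from the computation already carried out in the proof of Proposition \ref{P:alpha}, where the appearance of $\lceil \cdot \rceil$ is itself a signal that the parity of the enclosed expression is what is being tracked.

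For the second assertion, rearranging the identity obtained in that proof gives
$$ a_{\wt{\alpha}\Phi'(\mu)} - \sum_{\wt{\nu}} a_{\wt{\nu}\Phi'(\mu)} = 2 a_{\alpha\mu} - 2 d_{\alpha'\mu} + d_{\wt{\alpha}'\Phi'(\mu)} + d_{\wt{\beta}'\Phi'(\mu)} - d_{\wt{\gamma}'\Phi'(\mu)}. $$
The right-hand side is congruent modulo $2$ to $d_{\wt{\alpha}'\Phi'(\mu)} + d_{\wt{\beta}'\Phi'(\mu)} + d_{\wt{\gamma}'\Phi'(\mu)}$, which by Theorem \ref{T:21}(4) (applied to $(C',B')$) equals $c_{\wt{\gamma}'\Phi'(\mu)}$. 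Corollary \ref{C:c}(1) bounds $c_{\wt{\gamma}'\Phi'(\mu)} \leq 2$, so the expression is odd precisely when $c_{\wt{\gamma}'\Phi'(\mu)} = 1$.

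For the first assertion, the analogous identity is not explicitly in the preceding proof, but it is obtained by a mirror calculation which I would carry out as follows: starting from $a_{\wt{\alpha}\Phi'(\mu)} = \sum_{\wt{\rho}} e_{\wt{\alpha}\wt{\rho}} d_{\wt{\rho}'\Phi'(\mu)}$, isolate the three exceptional indices $\wt{\rho} \in \{\wt{\alpha},\wt{\beta},\wt{\gamma}\}$ (only $\wt{\rho}=\wt{\alpha}$ contributes, by Corollary \ref{C:basice}), apply Proposition \ref{P:maine}(3) to the remaining $e_{\wt{\alpha}\wt{\rho}}$ and Theorem \ref{T:21}(9) together with $\Phi(\rho)' = \Phi'(\rho')$ to translate the decomposition numbers back to $B'$. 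This reassembles into
$$ 2 a_{\wt{\alpha}\Phi'(\mu)} = 2 d_{\wt{\alpha}'\Phi'(\mu)} + \Bigl(a_{\alpha\mu} - \sum_{\nu} a_{\nu\mu}\Bigr) - X, $$
where $X$ is the contribution of the omitted indices $\rho \in \{\alpha,\beta,\gamma\}$.

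The crux will be evaluating $X$ explicitly. Combining $e_{\alpha\alpha}=1$, $e_{\alpha\beta}=e_{\alpha\gamma}=0$ (Corollary \ref{C:basice}), the vanishing of $e_{\nu\alpha}, e_{\nu\beta}, e_{\nu\gamma}$ for every $\nu \ne \gamma$ with $c_{\alpha\nu}=1$ (Lemma \ref{L:simple}), and the values $e_{\gamma\alpha}=0$, $e_{\gamma\beta}=-1$, $e_{\gamma\gamma}=1$ (Proposition \ref{P:maine}(4)) collapses $X$ to $d_{\alpha'\mu} + d_{\beta'\mu} - d_{\gamma'\mu}$. Then $X \equiv c_{\gamma'\mu} \pmod 2$ by Theorem \ref{T:21}(4), and the same appeal to Corollary \ref{C:c}(1) yields that $a_{\alpha\mu} - \sum_{\nu} a_{\nu\mu}$ is odd precisely when $c_{\gamma'\mu} = 1$. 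The only real obstacle is the bookkeeping: tracking the six exceptional partitions across the pair and its conjugate without sign errors, while keeping $\Phi$ and $\Phi'$ straight.
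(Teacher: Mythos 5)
Your proposal is correct and matches the paper's intent: the corollary is "of proof," and the parity statement is exactly what the final case analysis in the proof of Proposition \ref{P:alpha} records (the ceiling is genuinely needed precisely when $c_{\wt{\gamma}'\Phi'(\mu)}=1$, resp.\ $c_{\gamma'\mu}=1$), with the first assertion read off from the "entirely analogous" mirror computation just as you carry it out; your identification of $X=d_{\alpha'\mu}+d_{\beta'\mu}-d_{\gamma'\mu}\equiv c_{\gamma'\mu}\pmod 2$ and the appeal to Corollary \ref{C:c}(1) applied to the conjugate pair $(B',C')$ are exactly right. (Minor points only: $\Phi(\mu)$ in the statement should be read as $\Phi'(\mu)$, as you do, and $e_{\gamma\gamma}=1$ comes from Corollary \ref{C:basice} rather than Proposition \ref{P:maine}(4).)
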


We now introduce a new labelling of partitions having $e$-weight $2$, due to Chuang and Turner \cite{CTu}.

\begin{Def}
Let $\lambda$ be a partition having $e$-weight $2$.
\begin{itemize}
\item If the abacus display of $\lambda$ has a bead at position $x$ and a bead at position $x-e$, while position $x-2e$ is vacant, and there are exactly $a$ vacant positions between $x$ and $x-e$, and $b$ vacant positions between $x-e$ and $x-2e$, then $\lambda = [a,b]$.  Note that $\del\lambda = a-b$.

\item If the abacus display of $\lambda$ has a bead at position $x$ which two vacant positions above it, i.e.\ positions $x-e$ and $x-2e$ are vacant, and there are exactly $a$ vacant positions between $x$ and $x-e$, and $b$ vacant positions between $x-e$ and $x-2e$ (inclusive of $x-e$), then $\lambda = [a,b]$.  Note that $\del\lambda = a-b+1$.

\item If the abacus display of $\lambda$ has two beads, at positions $x$ and $y$ say, with $x > y$, $x \not\equiv y \pmod e$, each with a vacant position above it, i.e.\ positions $x-e$ and $y-e$ are vacant, and there are exactly $a$ vacant positions between $x$ and $x-e$, and $b$ vacant positions between $y$ and $y-e$, then $\lambda = [a,b]$.  Note that
$$
\del\lambda =
\begin{cases}
a-b+1, &\text{if }x-e < y < x; \\
a-b, &\text{otherwise.}
\end{cases}
$$
\end{itemize}
\end{Def}

When we need to emphasize $[a,b]$ is a partition in the weight $2$ block $B$, we write it as $[a,b]_B$.  Clearly, this labelling of $\lambda$ is independent of the abacus used to display $\lambda$.

\begin{eg*}
Let $B$ be the canonical weight $2$ Rouquier block, and we display the partitions in $B$ on an abacus in which runner $i$ has $(i+2)$ beads for all $i$.  For $0 \leq b < a < e$, the partition $[a,b]$ is obtained from its $e$-core by sliding one bead on each of the runners $a$ and $b$ down one position.  For $0 \leq a < e$, the partition $[a,a]$ is obtained by sliding two beads on runner $a$ down one position each, and $[a,a+1]$ is obtained by sliding the bottom bead on runner $a$ down two positions.
\end{eg*}

The main advantage of this labelling is that over a $[2:k]$-pair, it is invariant under the action of $\Phi$.  More precisely,

\begin{lem}[{\cite[Proposition 95]{CTu}}] \label{L:invar}
Suppose $B$ and $C$ form a $[2:k]$-pair.  Let $[a,b]_B$ be a partition in $B$.  Then $\Phi([a,b]_B) = [a,b]_C$.
\end{lem}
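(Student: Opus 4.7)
My plan is a direct combinatorial verification, splitting into two cases according to whether $\Phi$ acts as a pure runner swap or has exceptional action. Recall that when $k \geq 2$ (so that $w = 2 \leq k$), $\Phi$ simply interchanges runners $(i-1)$ and $i$ on the abacus display of every partition in $B$; and when $k = 1$, $\Phi$ still acts as a runner swap on every non-exceptional partition, while on the three exceptional ones Theorem \ref{T:21}(1) gives $\Phi(\alpha) = \wt{\alpha}$, $\Phi(\beta) = \wt{\gamma}$, $\Phi(\gamma) = \wt{\beta}$.

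For any $\lambda$ on which $\Phi$ acts by runner swap, I would carry out a case analysis on the three cases in the definition of $[a,b]$. Each defining interval spans $e-1$ (or $e$ in the tail of case~2) consecutive abacus positions and hence contains at most one position from each runner. When the active runner of $\lambda$ is neither $(i-1)$ nor $i$, the reference position $x$ (and $y$ in case~3) is fixed by the swap, and the two positions in the interval lying on runners $(i-1), i$ have the same abacus level; the swap merely exchanges their contents and the vacancy count is preserved. When the active runner is $(i-1)$ or $i$, the reference positions shift by $\pm 1$ and the intervals shift accordingly; the count is still preserved because, outside the active moves, runners $(i-1), i$ retain the top-filled configuration of their $e$-cores, and a direct check (using $n_i - n_{i-1} = k$) shows that the positions entering and leaving the shifted intervals have matching bead/vacancy status.

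For the three exceptional pairs when $k = 1$, one reads from the abacus diagrams in Section~4 that $\alpha$ is case~1 and $\wt{\alpha}$ is case~2, both with reference $x = (\ell_0 + 3)e + i$; $\gamma$ is case~2 and $\wt{\gamma}$ is case~1, both with $x = (\ell_0 + 3)e + (i-1)$; while $\beta$ and $\wt{\beta}$ are both case~3. Direct pairwise comparison then verifies the label equality. For instance, $(\alpha, \wt{\alpha})$ share the same $a$-interval and $a$-count; the $b$-intervals differ (case~1 excludes the endpoint $x - e$, case~2 includes it), but the discrepancy is reconciled because the extra position on runner $(i-1)$ at level $\ell_0 + 2$ is vacant in $\alpha$ and a bead in $\wt{\alpha}$, while the additional endpoint on runner $i$ in $\wt{\alpha}$'s range is vacant. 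Analogous calculations dispose of $(\beta, \wt{\gamma})$ and $(\gamma, \wt{\beta})$.

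The main obstacle is the bookkeeping in the runner-swap case when the active runner coincides with $(i-1)$ or $i$: one must track carefully how the shifted defining intervals are compensated by the standard top-filled structure of the unchanged runners. A conceptually cleaner route would be to extract an abacus-independent characterisation of $[a,b]$---for instance in terms of the $e$-quotient of $\lambda$ together with the runner bead-count vector $(n_0, \ldots, n_{e-1})$---that is manifestly invariant under interchange of adjacent runners; with such a description in hand the lemma reduces to verifying the three exceptional pairs handled above.
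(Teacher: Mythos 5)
Your proposal is correct and follows essentially the same route as the paper, which likewise splits into the runner-swap case (dismissed there as ``easy to see'') and the three exceptional partitions handled via Theorem \ref{T:21}(1); you simply supply more of the combinatorial detail, and your identification of the case types and reference positions for $\alpha,\beta,\gamma$ and $\wt{\alpha},\wt{\beta},\wt{\gamma}$ checks out. One small caveat: in the shifted-window subcase your appeal to the runners ``retaining the top-filled configuration of their $e$-cores'' is not literally available when the active runner is $(i-1)$ or $i$ --- the matching of the entering and leaving positions instead follows from the weight-$2$ bound together with the swap condition (every bead on runner $(i-1)$ has its succeeding position occupied) --- but this is exactly the verification the paper also leaves implicit.
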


\begin{proof}
Let $\lambda = [a,b]_B$.  If the abacus display of $\lambda$ has exactly one bead on runner $i$ whose preceding position is vacant, then the effect of $\Phi$ on $\lambda$ is to interchange runners $i$ and $(i-1)$.  In this case, it is easy to see that $\Phi(\lambda) = \Phi([a,b]_B) = [a,b]_C$.  On the other hand, if the abacus display of $\lambda$ has more than one bead on runner $i$ whose preceding position is vacant, then $k=1$ and $\lambda$ is an exceptional partition, i.e.\ $\lambda \in \{\alpha,\beta,\gamma\}$, and one can also verify in this case that $\Phi(\lambda) = \Phi([a,b]_B) = [a,b]_C$ using Theorem \ref{T:21}(1).
\end{proof}

The Mullineux map on weight 2 partitions can thus be easily described under this labelling:

\begin{lem} \label{L:Mull}
Let $B$ be a weight $2$ block, and let $B'$ be its conjugate block.  Then
\begin{alignat*}{2}
m([a,a+1]_B) &= [e-a,e-a+1]_{B'} &\qquad &(1 \leq a < e); \\
m([a,b]_B) &= [e-b,e-a]_{B'} &&(1 \leq b \leq a < e).
\end{alignat*}
\end{lem}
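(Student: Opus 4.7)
The main tool will be Theorem \ref{T:R}(2(c)), which characterises $m(\lambda)'$ as the predecessor of $\lambda$ under $\domeq$ within the appropriate set $A_\delta$ or colour class $A_{0,b/w}$. Hence the proof reduces to two tasks: identifying the predecessor of $[a,b]_B$ in CT labels, and describing how conjugation acts on CT labels. My plan is first to reduce to the canonical Rouquier block via Lemma \ref{L:sequence}, using Lemma \ref{L:invar} to ensure that CT labels are preserved by the Scopes bijection $\Phi$ at each $[2:k]$-pair in the chain. To transfer Mullineux through the chain, I check that $m$ commutes with $\Phi$ through CT labels: for $k \geq 2$ this follows from the Morita equivalence underlying Scopes (so that the socle characterisation of $m$ in Lemma \ref{L:basic}(2(b)) transports directly), and for $k = 1$ it reduces to Theorem \ref{T:21}(1,4), using $m(\alpha_B) = \gamma_B'$ together with the matching exceptional structure of the conjugate pair $(B', C')$.

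In the canonical Rouquier block (Example: runner $i$ carries $i+2$ beads), direct bead analysis gives: Case 3 $[a,b]_B$ with $0 \leq b < a < e$ has $\del = a - b$ (with $e$-hook leg-lengths $e-1-a$ and $e-1-b$); Case 1 $[a,a]_B$ and Case 2 $[a,a+1]_B$ both have $\del = 0$. For the latter two the natural decomposition is ``one $e$-hook and one $2e$-hook'', so the colour is determined by the $2e$-hook leg-length ($2e-1-2a$ for Case 1, $2e-2-2a$ for Case 2) modulo $4$. Comparing partial sums yields the dominance orders: for $\delta \geq 1$, $A_\delta = \{[a,a-\delta]_B : \delta \leq a < e\}$ is totally ordered by decreasing $a$, so the predecessor of $[a,b]_B$ is $[a-1,b-1]_B$; for $\delta = 0$, Case 1 and Case 2 partitions of the same colour interleave, so that the predecessor of $[a,a+1]_B$ is $[a-1,a-1]_B$ and the predecessor of $[a,a]_B$ is $[a-1,a]_B$.

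Next, analysing rotation of the abacus by $\pi$ combined with bead--vacancy swap (the key point being that $a$ vacancies between $x$ and $x-e$ translate to $e-1-a$ vacancies in the mirrored range), the effect on CT labels works out to: Case 3 $[a,b]_B \mapsto [e-1-b, e-1-a]_{B'}$; Case 1 $[a,a]_B \mapsto [e-1-a, e-a]_{B'}$ (a Case 2 partition in $B'$); and Case 2 $[a,a+1]_B \mapsto [e-1-a, e-1-a]_{B'}$ (a Case 1 partition in $B'$). Assembling via $m(\lambda) = (\text{predecessor of }\lambda)'$, I then obtain $m([a,b]_B) = [a-1,b-1]_B' = [e-b, e-a]_{B'}$ for $1 \leq b \leq a < e$ (covering both Cases 1 and 3), and $m([a,a+1]_B) = [a-1,a-1]_B' = [e-a, e-a+1]_{B'}$ for $1 \leq a < e$, matching the lemma.

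The main obstacle I anticipate is getting the colour assignments for Cases 1 and 2 correct via the $2e$-hook leg-length modulo $4$ criterion (one can easily be misled by the superficially more natural ``two $e$-hooks'' decomposition, which gives the wrong answer here), and the bookkeeping for the conjugation formula across the three CT cases; the dominance comparisons themselves, though routine, require care where Case 1 and Case 2 partitions interleave within $A_{0,b}$ and $A_{0,w}$.
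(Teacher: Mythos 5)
Your proposal is correct and its skeleton coincides with the paper's: both reduce to the canonical Rouquier block via Lemma \ref{L:sequence}, using the invariance of the Chuang--Turner labels under $\Phi$ (Lemma \ref{L:invar}) and the commutation $m(\Phi_{B,C}(\lambda)) = \Phi_{B',C'}(m(\lambda))$ to transport the formula along a chain of $[2:k]$-pairs. Where you differ is the base case: the paper simply cites the closed formula for the Mullineux map on Rouquier partitions from \cite[Proposition 3.7(2)]{T1}, whereas you verify it from scratch via Richards' characterisation (Theorem \ref{T:R}(2(c))) of $m(\lambda)'$ as the dominance predecessor within the relevant $\del$-class or colour class, together with a direct computation of $\del$-values, colours (correctly via the $2e$-hook leg-length modulo $4$, since $[a,a]$ and $[a,a+1]$ each have one $e$-hook and one $2e$-hook in the relevant sense), the dominance order (which on each totally ordered class may be read off from the lexicographic order), and the effect of conjugation on the labels; I checked these computations and they are consistent, e.g.\ they reproduce $m([a,a]_B)=[e-a,e-a]_{B'}$. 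This buys self-containedness at the cost of the colour and conjugation bookkeeping you rightly flag as delicate. One caution on the transfer step: the identity $m\circ\Phi_{B,C}=\Phi_{B',C'}\circ m$ is stated without proof in the paper as well, and your $k=1$ justification via Theorem \ref{T:21}(1,4) only explicitly covers the exceptional partitions $\alpha,\beta,\gamma$; one also needs the identity for non-exceptional $\lambda$ whose image $m(\lambda)'$ is exceptional (equivalently whose predecessor in its class is $\alpha$, $\beta$ or $\gamma$), which requires a little more than the matching of exceptional triples --- though this is no less rigorous than the paper's own treatment.
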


\begin{proof}
Using Proposition 3.7(2) of \cite{T1}, one can check that this holds for the canonical weight 2 Rouquier block.  If $B$ and $C$ form a $[2:k]$-pair, and $C'$ is the conjugate block of $C$, then $m(\Phi_{B,C}(\lambda)) = \Phi_{B',C'}(m(\lambda))$, so that the Lemma holds for $B$ if and only if it holds for $C$.  Since every arbitrary weight $2$ block can be induced to a Rouquier block by Lemma \ref{L:sequence}, and the Rouquier blocks of a given weight form a single Scopes equivalence class, the Lemma follows.
\end{proof}

\begin{Def}
Let $\lambda = [a,b]$ be a partition having $e$-weight $2$. Define
$$
\varepsilon \lambda =
\begin{cases}
1, &\text{if } \del\lambda \ne a-b; \\
0, &\text{otherwise.}
\end{cases}
$$
\end{Def}

\begin{eg*}
Consider the partitions in the canonical weight $2$ Rouquier block.  Then
$$\varepsilon \lambda =
\begin{cases}
1, &\text{if } \lambda \in \{ [a,a+1] \mid 0 \leq a < e \}; \\
0, &\text{otherwise.}
\end{cases}
$$
\end{eg*}

\begin{lem} \label{L:ep}
Suppose $B$ and $C$ form a $[2:k]$-pair, and let $\lambda$ be a partition in $B$. Then $\varepsilon \lambda = \varepsilon \Phi(\lambda)$ unless $k=1$ and $\lambda = \alpha$, in which case $\varepsilon \alpha = 0$ and $\varepsilon \wt{\alpha} = 1$.
\end{lem}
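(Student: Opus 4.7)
The plan is to exploit Lemma \ref{L:invar}, which gives $\Phi([a,b]_B) = [a,b]_C$. Since $\varepsilon\lambda$ equals $1$ exactly when $\del\lambda = a - b + 1$ (from the three cases in the definition of $[a,b]$), it will suffice to track $\del$ through $\Phi$, showing $\del\lambda = \del\Phi(\lambda)$ in all non-exceptional cases and computing it directly in the exceptional one.

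First I would dispose of the non-exceptional situation: when $k \geq 2$ (so $w = 2 \leq k$), or when $k = 1$ and $\lambda \notin \{\alpha, \beta, \gamma\}$. By the discussion following Theorem \ref{T:Bru}, in this situation the map $\Phi$ acts simply as runner interchange on $(i-1)$ and $i$. For a Case~1 or Case~2 partition (weight supported on a single runner), this interchange plainly preserves the case and the $[a,b]$-label, so $\del$ is unchanged. For a Case~3 partition, with moved beads at distinct positions $x > y$ on distinct runners, I would verify that the sub-case condition ``$x - e < y < x$'' is preserved. The key point is that when one of the moved beads lies on runner $(i-1)$ or $i$, its position shifts by $\pm 1$ under the interchange; the old and new open-interval conditions only disagree at the boundary values $y = x - 1$ and $y = x - e$, and a short check rules these out (they would force the two moved beads onto the same runner, violating either Case~3 or the runner-location hypothesis).

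For the exceptional case $k = 1$ and $\lambda \in \{\alpha, \beta, \gamma\}$, I would read the case type directly off the abacus diagrams at the start of Section~4. This inspection yields: $\alpha$ is Case~1, $\beta$ is Case~3 with $y < x - e$, $\gamma$ is Case~2; and $\wt\alpha$ is Case~2, $\wt\beta$ is Case~3 with $x - e < y < x$, $\wt\gamma$ is Case~1. Combined with $\Phi(\alpha) = \wt\alpha$, $\Phi(\beta) = \wt\gamma$, $\Phi(\gamma) = \wt\beta$ from Theorem \ref{T:21}(1), this yields $\varepsilon\beta = \varepsilon\wt\gamma = 0$ and $\varepsilon\gamma = \varepsilon\wt\beta = 1$, while $\varepsilon\alpha = 0$ and $\varepsilon\wt\alpha = 1$, giving precisely the claimed exception.

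The main obstacle will be the Case~3 sub-case preservation under runner interchange: one must carefully track how bead positions shift by $\pm 1$ under the interchange and verify that the boundary configurations $y = x - 1$ or $y = x - e$ never arise under our non-exceptional hypotheses. Everything else reduces to direct combinatorial inspection of the abacus.
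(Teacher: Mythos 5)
Your proposal is correct, and its skeleton is the same as the paper's: reduce everything to the two facts that $\Phi$ preserves the label $[a,b]$ (Lemma \ref{L:invar}) and that $\Phi$ preserves $\del$ away from the exception, then read the exceptional partitions directly off their abacus displays. The one place you do substantially more work than necessary is the non-exceptional case: you re-derive $\del\lambda=\del\Phi(\lambda)$ from scratch by classifying $\lambda$ into the three cases of the Chuang--Turner labelling and checking that runner interchange preserves the case type and the sub-case condition $x-e<y<x$. The paper instead simply quotes Theorem \ref{T:21}(2), which already records $\del\lambda=\del\Phi(\lambda)$ for $\lambda\ne\alpha$ in a $[2:1]$-pair (and for $k\ge 2$ the same is immediate since $\Phi$ is a plain runner interchange); once $\del$ and $[a,b]$ are both known to be preserved, $\varepsilon\lambda=\varepsilon\Phi(\lambda)$ follows from the definition $\varepsilon\lambda=0\iff\del\lambda=a-b$ with no case analysis at all. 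Your boundary check for Case~3 does go through ($y=x-e$ is excluded by $x\not\equiv y\pmod e$, and $y=x-1$ with the beads on runners $i-1$ and $i$ just swaps the two bead positions), and your identification of the exceptional partitions --- $\alpha$, $\wt\gamma$ in Case~1, $\gamma$, $\wt\alpha$ in Case~2, $\beta$, $\wt\beta$ in Case~3 with the sub-case condition failing and holding respectively --- is accurate and yields exactly the claimed values $\varepsilon\alpha=0$, $\varepsilon\wt\alpha=1$, $\varepsilon\beta=\varepsilon\wt\gamma=0$, $\varepsilon\gamma=\varepsilon\wt\beta=1$. So the argument is sound; it just buys by hand what Theorem \ref{T:21}(2) already provides.
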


\begin{proof}
Note that $\del\lambda = \del\Phi(\lambda)$ unless $k=1$ and $\lambda = \alpha$, in which case $\del\lambda = \del\Phi(\lambda) - 1$ (cf.\ Theorem \ref{T:21}(1,2,3)).  By Lemma \ref{L:invar}, if $\lambda = [a,b]_B$, then $\Phi(\lambda) = [a,b]_C$. Thus, unless $k=1$ and $\lambda = \alpha$, we have $\varepsilon \lambda = \varepsilon \Phi(\lambda)$.  That $\varepsilon \alpha = 0$ and $\varepsilon \wt{\alpha} = 1$ follows directly from the definitions.
\end{proof}

We are now able to state the main Theorem of this section.

\begin{thm} \label{T:AC}
Let $B$ be an arbitrary weight $2$ block.  Let $\lambda$ be a partition in $B$, and let $\mu$ be a partition in $B'$.  Then
\begin{align*}
a_{\lambda\mu} &= \delta_{\lambda m(\mu)} \qquad \text{if $\mu$ is $e$-regular}.\\
a_{\lambda[0,1]_{B'}} &=
\begin{cases}
(-1)^{\del\lambda}, &\text{if } \varepsilon \lambda = 0; \\
(-1)^{\del\lambda+1}, &\text{if } \varepsilon \lambda = 1 \text{ and } \lambda \in \{ [e-1,b] \mid 0\leq b < e\}; \\
0, &\text{otherwise.}
\end{cases} \\
a_{\lambda[0,0]_{B'}} &=
\begin{cases}
(-1)^{\del\lambda}, &\text{if } \varepsilon \lambda = 0 \text{ and } \lambda \notin \{ [a,a] \mid 0 \leq a < e \}, \\
& \text{or }
\varepsilon \lambda = 1 \text{ and } \lambda \in \{ [e-1,b] \mid 0 \leq b \leq e \} \cup \{ [a,a+1] \mid 0 \leq a < e \}; \\
2(-1)^{\del\lambda}, &\text{if } \varepsilon \lambda = 1 \text{ and }\lambda \notin \{ [e-1,b] \mid 0 \leq b \leq e \} \cup \{ [a,a+1] \mid 0 \leq a < e \}; \\
0, &\text{otherwise.}
\end{cases} \\
a_{\lambda[j,0]_{B'}} &=
\begin{cases}
(-1)^{\del\lambda+j+1}, &\text{if } \lambda \in
\{ [a,e-j+\varepsilon \lambda] \mid e-j \leq a < e \} \cup \{ [e-j-\varepsilon \lambda,b] \mid \varepsilon \lambda \leq b < e-j \}; \\
0, &\text{otherwise}.
\end{cases}
\end{align*}
\end{thm}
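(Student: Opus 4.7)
The first formula (for $e$-regular $\mu$) follows directly from the definition $a_{\lambda\mu} = \sum_\nu e_{\lambda\nu}\,d_{\nu'\mu}$: by Lemma~\ref{L:basic}(2c), $d_{\nu'\mu} = d_{\nu\,m(\mu)}$ whenever $\mu$ is $e$-regular, and the sum then collapses to $\delta_{\lambda,m(\mu)}$ by the inverse-matrix identity. So the real content lies in the three $e$-singular formulas, and by Theorem~\ref{T:R}(2) the $e$-singular partitions in $B'$ are exactly $[0,1]_{B'}$, $[0,0]_{B'}$ and $[j,0]_{B'}$ ($1\leq j<e$), so between them these formulas exhaust all $e$-singular $\mu$. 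My plan is to induct along a sequence of $[2:k]$-pairs linking $B$ to a canonical Rouquier block (which exists by Lemma~\ref{L:sequence}), with the Chuang--Turner $[a,b]$-labelling as the bookkeeping device, since by Lemma~\ref{L:invar} it is $\Phi$-invariant throughout the sequence.

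For the base case I would compute $a_{\lambda\mu} = \sum_\nu e_{\lambda\nu}\,d_{\nu'\mu}$ in the canonical Rouquier block directly from the Leclerc--Miyachi closed formulas~\cite{LM} for $d_{\lambda\mu}(v)$ and $e_{\lambda\mu}(v)$, specialised to $v=1$ via Corollary~\ref{C:decomp}. In the canonical Rouquier block $\varepsilon\nu = 1$ precisely when $\nu = [a,a+1]$, and the $[a,b]$-labelling gives a transparent description of when $d_{\nu'\mu}\neq 0$ for each of the three $e$-singular choices of $\mu$; the four branches in each theorem formula should arise naturally from the resulting case analysis on the label $[a,b]$ of $\lambda$. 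This step will be the most calculation-intensive part of the proof.

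For the inductive step, suppose $B$ and $C$ form a $[2:k]$-pair and the theorem is known for $C$. For every pair $(\lambda,\mu)$ except the single exceptional pair ($\lambda = \alpha$, $k=1$, $\mu \neq \gamma'$), Proposition~\ref{P:main} gives $a_{\lambda\mu} = a_{\Phi(\lambda)\Phi'(\mu)}$. Combining Lemma~\ref{L:invar} (invariance of the $[a,b]$-label), Lemma~\ref{L:ep} (invariance of $\varepsilon\lambda$ away from the exceptional case) and Theorem~\ref{T:21}(2) (invariance of $\del\lambda$), every datum appearing in the theorem's formulas is preserved by $\Phi$, so the formulas evaluate identically at $\lambda$ and $\Phi(\lambda)$, and the inductive hypothesis for $C$ closes this case.

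The remaining exceptional case $\lambda = \alpha$ is where the formula can legitimately change across the $[2:1]$-pair, because $\varepsilon\alpha = 0$ while $\varepsilon\wt\alpha = 1$ and $\del\alpha = \del\wt\alpha - 1$; this will be the main obstacle. I would apply Proposition~\ref{P:alpha} to write $a_{\alpha\mu}$ as the ceiling $\lceil\tfrac12(a_{\wt\alpha\Phi'(\mu)} - \sum_{\wt\nu}a_{\wt\nu\Phi'(\mu)})\rceil$, where $\wt\nu$ runs over partitions in $C$ with $c_{\wt\alpha\wt\nu} = 1$; by induction all these values are known. The remaining task is to enumerate the relevant $\wt\nu$ (using Corollary~\ref{C:c} and Theorem~\ref{T:21}(4)), substitute their inductive values, and verify arithmetically, separately for each of the three $e$-singular choices of $\mu$, that the ceiling produces exactly the $\varepsilon\alpha = 0$ value prescribed in the theorem.
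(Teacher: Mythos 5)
Your proposal is, in substance, the paper's own proof: the paper likewise reduces the theorem to a base case at the (canonical) Rouquier block, verified from the Leclerc--Miyachi closed formulas (Proposition~\ref{P:Rouq}), plus a transfer step along $[2:k]$-pairs (Proposition~\ref{P:induction}) that uses Proposition~\ref{P:main} for all non-exceptional pairs and Proposition~\ref{P:alpha} together with Lemmas~\ref{L:invar} and~\ref{L:ep} for the exceptional one. Two points need repair. First, your inductive step runs in the wrong direction: Lemma~\ref{L:sequence} reaches the Rouquier block by inducing \emph{up}, so in each pair $(B_i,B_{i-1})$ of the chain the block nearer to Rouquier is the larger one and plays the role of $B$; the transfer must therefore go from $B$ (known) to $C$, so in the exceptional case you need the second identity of Proposition~\ref{P:alpha}, namely $a_{\wt{\alpha}\Phi'(\mu)} = \lceil \tfrac{1}{2}(a_{\alpha\mu} - \sum_{\nu} a_{\nu\mu})\rceil$ with $\nu$ running over partitions in $B$ with $c_{\alpha\nu}=1$, rather than the one you quote. (The non-exceptional case is unaffected since the equality in Proposition~\ref{P:main} is symmetric.) Second, ``enumerate the relevant $\nu$'' is not a consequence of Corollary~\ref{C:c} and Theorem~\ref{T:21}(4) alone: one must identify the $[a,b]$-labels and $\varepsilon$-values of the partitions with $c_{\alpha\nu}=1$, which is precisely the content of Lemma~\ref{L:list} and Corollary~\ref{C:epsilonvalues} and is the only genuinely laborious computation in the inductive step. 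With these two adjustments your outline coincides with the paper's argument.
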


Theorem \ref{T:AC} follows immediately from Lemma \ref{L:sequence} and the following two Propositions:

\begin{prop} \label{P:induction}
Suppose $B$ and $C$ are weight $2$ blocks forming a $[2:k]$-pair.  If Theorem \ref{T:AC} holds for $B$, then it holds for $C$.
\end{prop}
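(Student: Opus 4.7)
The plan is to combine Proposition \ref{P:main} with the ceiling-formula of Proposition \ref{P:alpha}. Throughout, the $[a,b]$-labelling of weight-$2$ partitions (Lemma \ref{L:invar}), the values of $\del$ and $\varepsilon$ (Theorem \ref{T:21}(2) and Lemma \ref{L:ep}), and the Mullineux map (Lemmas \ref{L:invar} and \ref{L:Mull}) all intertwine with $\Phi$ and $\Phi'$, modulo the single exception $\alpha \leftrightarrow \wt\alpha$ in the $k=1$ case. Consequently, every ingredient appearing on the right-hand side of a formula in Theorem \ref{T:AC} is transported correctly between $B$ and $C$ except precisely at $\alpha$.

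For $k \geq 2$, Proposition \ref{P:main} applies to every pair $(\lambda,\mu)$ and gives $a_{\lambda\mu}=a_{\Phi(\lambda)\Phi'(\mu)}$; under the identifications above, each of the four formulas of Theorem \ref{T:AC} transports verbatim from $B$ to $C$. For $k=1$ and $\lambda \ne \wt\alpha$, the preimage $\lambda_0 = \Phi^{-1}(\lambda)$ is not $\alpha$, so Proposition \ref{P:main} again yields $a_{\lambda\mu}=a_{\lambda_0\mu_0}$ unconditionally, and the same argument carries through. This reduces the problem to the single row $\lambda=\wt\alpha$. Here the $e$-regular case of Theorem \ref{T:AC} is an instance of Theorem \ref{Mullineux} and requires no induction. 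For each of the three remaining target partitions $\mu \in \{[0,1]_{C'},[0,0]_{C'},[j,0]_{C'}\}$, set $\mu_0=(\Phi')^{-1}(\mu)$ and split into two subcases: if $\mu_0=\gamma'$ then Proposition \ref{P:main} still applies and gives $a_{\wt\alpha\mu}=a_{\alpha\gamma'}$, a quantity directly supplied by Theorem \ref{T:AC} on $B$; otherwise Proposition \ref{P:alpha} furnishes
$$a_{\wt\alpha\mu} \;=\; \left\lceil \tfrac{1}{2}\Bigl( a_{\alpha\mu_0} - \sum_{\nu:\,c_{\alpha\nu}=1} a_{\nu\mu_0} \Bigr) \right\rceil,$$
and every term on the right is delivered by the inductive hypothesis applied to $B$.

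The main obstacle is the final verification that this ceiling equals the value prescribed by Theorem \ref{T:AC} at $(\wt\alpha,\mu)$. This will require identifying the partitions $\nu\in B$ with $c_{\alpha\nu}=1$ --- characterised by Proposition \ref{P:equiv1} and, via Corollary \ref{C:c}(2), matched by $\Phi$ to partitions $\wt\nu$ in $C$ with $c_{\wt\alpha\wt\nu}=2$ --- reading off their $[a,b]$-labels together with $\del\nu$ and $\varepsilon\nu$, and summing the resulting Theorem \ref{T:AC} contributions for each of the three target formulas. The case analysis branches over the label of $\wt\alpha$ in $C$ (which fixes $\alpha,\beta,\gamma$ and hence $\gamma'$), over the choice of target $\mu$, and over whether $\gamma'$ happens to fall inside the finite exceptional lists of Theorem \ref{T:AC}. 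The parity shift $\varepsilon\alpha=0$ versus $\varepsilon\wt\alpha=1$ of Lemma \ref{L:ep} is precisely what the halving-and-ceiling of Proposition \ref{P:alpha} is designed to absorb, so the arithmetic should reconcile on the nose.
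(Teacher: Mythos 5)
Your proposal follows the paper's proof essentially verbatim: the generic case is handled by Proposition \ref{P:main} together with the $\Phi$-invariance of the $[a,b]$-labelling and of $\varepsilon$ (Lemmas \ref{L:invar} and \ref{L:ep}), and the exceptional row $\wt{\alpha}$ by the ceiling formula of Proposition \ref{P:alpha} combined with an explicit identification of the partitions $\nu$ with $c_{\alpha\nu}=1$ and their $\varepsilon$-values (which the paper packages as Lemma \ref{L:list} and Corollary \ref{C:epsilonvalues}). The paper likewise leaves the final case-by-case arithmetic as a routine verification, so your level of detail matches its own.
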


\begin{prop} \label{P:Rouq}
Theorem \ref{T:AC} holds for weight $2$ Rouquier blocks.
\end{prop}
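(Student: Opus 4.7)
The plan is to reduce to a single canonical Rouquier block via Scopes equivalence, and then to verify each case of Theorem \ref{T:AC} in that block using the closed formulas of Leclerc--Miyachi \cite{LM} for $d_{\lambda\mu}(v)$ and $e_{\lambda\mu}(v)$ specialised at $v=1$ (which coincide with the characteristic-$p$ values since we are in the abelian defect situation $p>w=2$, cf.\ \cite{JLM}).

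The reduction step goes as follows. All weight $2$ Rouquier blocks form a single Scopes equivalence class, so any two are connected by $[2:k]$-pairs with $k \geq 2$. For $k \geq 2$ the exceptional partitions of Section~4 do not arise, so Proposition \ref{P:main} gives $a_{\lambda\mu} = a_{\Phi(\lambda)\Phi'(\mu)}$ without exception. By Lemma \ref{L:invar} the $[a,b]$-labelling is preserved by $\Phi$; by Lemma \ref{L:ep} (with $k \geq 2$) so is $\varepsilon$; and by Theorem \ref{T:21}(2) so is $\del$. Hence both sides of every equation in Theorem \ref{T:AC} transform compatibly, and it suffices to verify the theorem for one specific block, the canonical Rouquier block $B$.

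In that block, the $e$-regular case of the theorem is immediate from Theorem \ref{Mullineux}: $a_{\lambda\mu} = \delta_{\lambda m(\mu)}$. For the $e$-singular $\mu \in B'$, Theorem \ref{T:R}(2(b)) identifies them: one minimal partition in each $A_i$ ($1 \leq i < e$) plus one each in $A_{0,b}$ and $A_{0,w}$, which when translated to the $[a,b]$-labelling via Lemma \ref{L:Mull} and the example preceding it are precisely $[0,1]_{B'}$, $[0,0]_{B'}$ and $[j,0]_{B'}$ for $1 \leq j < e$. For each such $\mu$ I expand $a_{\lambda\mu} = \sum_{\nu} e_{\lambda\nu}\,d_{\nu'\mu}$. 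The sum is very sparse: by Corollary \ref{C:decomp}, $d_{\nu'\mu} \ne 0$ forces $\nu' \dom \mu$ and $|\del\nu' - \del\mu| \leq 1$ (with equality only in the extremal Mullineux case), and an analogous restriction on $e_{\lambda\nu}$ follows from Corollary \ref{C:basice} together with the closed form in \cite{LM}. Substituting the Leclerc--Miyachi values, reading the Mullineux image off Lemma \ref{L:Mull}, and collecting contributions by parity of $\del\nu$ and value of $\varepsilon\nu$ reduces each case to a short explicit check matching the answer in Theorem \ref{T:AC}.

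The main obstacle is purely combinatorial bookkeeping, concentrated in the case $a_{\lambda[0,0]_{B'}}$. Here the statement splits into four subcases according to whether $\lambda$ lies in one of the exceptional families $\{[a,a]\}$, $\{[a,a+1]\}$, $\{[e-1,b]\}$, or none of them, and the value $2(-1)^{\del\lambda}$ in the generic $\varepsilon\lambda=1$ subcase must be explained by exhibiting \emph{two} distinct $\nu$ whose contributions $e_{\lambda\nu}\,d_{\nu'\mu}$ carry the same sign, together with the verification that in the excluded subcases these two contributions either collapse to one or cancel against a Mullineux boundary term. The analogous but lighter analyses for $a_{\lambda[0,1]_{B'}}$ and $a_{\lambda[j,0]_{B'}}$ proceed in the same fashion, and the appearance of the shift $e-j \pm \varepsilon\lambda$ in the last formula comes from tracking how the $\pm 1$ adjustment in $\del$ (case 2 or 3 of the $[a,b]$ definition) shifts the dominance range of contributing $\nu$.
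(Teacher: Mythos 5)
Your proposal matches the paper's proof: verify Theorem \ref{T:AC} for the canonical Rouquier block by computing $d_{\lambda\mu}$ and $e_{\lambda\mu}$ from the Leclerc--Miyachi closed formulas (via Theorem \ref{T:vdecomp}(3,4) and Corollary \ref{C:decomp}), then transfer to all weight $2$ Rouquier blocks using the fact that they form a single Scopes equivalence class --- the paper cites Proposition \ref{P:induction} for this last step, which for $k\geq 2$ amounts to exactly your direct application of Proposition \ref{P:main} together with Lemmas \ref{L:invar} and \ref{L:ep}. One minor slip in your sketch: Corollary \ref{C:decomp} forces $\mu \domeq \nu'$ rather than $\nu' \dom \mu$, but this does not affect the argument.
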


\begin{proof}[Proof of Theorem \ref{T:AC} using Propositions \ref{P:induction} and \ref{P:Rouq}]
If $B$ is a Rouquier block, then the Theorem follows from Proposition \ref{P:Rouq}.  If $B$ is not Rouquier, then by Lemma \ref{L:sequence}, there exists a sequence $B_0,B_1,\dotsc,B_s$ of weight $2$ blocks such that $B_0 = B$, $B_s$ is Rouquier, and for each $1 \leq i \leq s$, there exists $k_i \in \mathbb{Z}^+$ such that $B_i$ and $B_{i-1}$ form a $[2:k_i]$-pair.  By induction, we may assume that the Theorem holds for $B_1$; hence it also holds for $B_0 = B$ by Proposition \ref{P:induction}.
\end{proof}

The following Lemma will be used in the proof of Proposition \ref{P:induction}.

\begin{lem} \label{L:list}
Suppose $B$ and $C$ form a $[2:1]$-pair, and let $\alpha = [a,b]$.  Then $c_{\alpha\lambda} = 1$ if and only if $\lambda = [a-1,b]$, or $[a,b+1]$, or $[a+1,b-1]$ (when $a \leq e-2$), or $[a+1,a+2]$ (when $a=b \leq e-2$).  \end{lem}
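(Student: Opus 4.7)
The proof will rest on the identity
$$c_{\alpha\lambda} = d_{\alpha\lambda} + d_{\beta\lambda} + d_{\gamma\lambda}$$
furnished by Theorem~\ref{T:21}(4). Since each summand lies in $\{0,1\}$ by Corollary~\ref{C:decomp} and $c_{\alpha\lambda} \le 2$ for $\lambda \ne \alpha$ by Corollary~\ref{C:c}(1) (one also checks $c_{\alpha\alpha} = 3$ directly, using $d_{\nu\alpha} = 1$ for every $\nu \in \{\alpha,\beta,\gamma\}$), the equation $c_{\alpha\lambda} = 1$ is equivalent to $\lambda \ne \alpha$ and exactly one of the three decomposition numbers being nonzero.

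The first task is to rule out the $\lambda$ for which two (or three) of $d_{\alpha\lambda}, d_{\beta\lambda}, d_{\gamma\lambda}$ are simultaneously nonzero. By Lemma~\ref{L:commonB} this forces $\lambda \in \{\beta, m(\beta')\}$ (with $\lambda = m(\beta')$ requiring $\beta$ to be $e$-restricted); and a short dominance argument using Lemma~\ref{L:basic}(1,2) together with $\gamma = m(\alpha)'$ shows that $d_{\alpha\lambda}$ and $d_{\gamma\lambda}$ can both be nonzero only when $\lambda \in \{\alpha, \gamma\}$. For each of the finitely many exceptional $\lambda$'s that arise I will verify directly that $c_{\alpha\lambda} \in \{2,3\}$, so none of them contributes a candidate.

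The main enumeration step is then to pick out the remaining $\lambda$, namely the non-exceptional composition factors $L^\lambda$ of $\Delta^\alpha$, $\Delta^\beta$ or $\Delta^\gamma$. By Corollary~\ref{C:decomp} such a $\lambda$ must satisfy $\lambda \dom \nu \dom m(\lambda)'$ for some $\nu \in \{\alpha, \beta, \gamma\}$ together with $|\del\nu - \del\lambda| = 1$. By Theorem~\ref{T:R}, dominance is a total order within each set $A_i$ of weight $2$ partitions of fixed $\del$-value, so these conditions isolate a handful of candidates at the $\del$-levels $\del\nu \pm 1$ for each of $\nu = \alpha, \beta, \gamma$.

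The final, and most delicate, step is to translate these candidates into the $[a,b]$-labelling defined before Lemma~\ref{L:invar}. The abacus displays at the start of Section~4 show that the two out-of-place beads of $\alpha$ lie at consecutive rows of runner $i$, so $\alpha = [a,b]$ is a Case~1 configuration in the definition. Computing the abacus display of each candidate $\lambda$ by performing the admissible ``one-step'' moves on $\alpha$'s (or $\beta$'s, or $\gamma$'s) out-of-place beads and reading off the resulting $[a,b]$-label, one recovers exactly the four families $[a-1,b]$, $[a,b+1]$, $[a+1,b-1]$ and $[a+1,a+2]$. The side conditions ($a \le e-2$ for the third family, and $a = b \le e-2$ for the fourth) emerge as the precise abacus constraints that make the corresponding move legal; the fourth family only arises when $\alpha$'s two out-of-place beads share a runner, which under the labelling is exactly the condition $a = b$.

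The central difficulty is this last translation, in which one must simultaneously track the legality of each abacus move and its effect on the $[a,b]$-label, being careful at the boundary cases where a move ceases to be admissible and the corresponding partition drops out of the list.
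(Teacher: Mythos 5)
Your overall strategy is sound and genuinely different from the paper's: you work directly in $B$ with the identity $c_{\alpha\lambda}=d_{\alpha\lambda}+d_{\beta\lambda}+d_{\gamma\lambda}$ and the multiplicity-freeness from Corollary \ref{C:decomp}, so that $c_{\alpha\lambda}=1$ becomes ``$L^\lambda$ is a composition factor of exactly one of $\Delta^\alpha,\Delta^\beta,\Delta^\gamma$''. The paper instead passes to the block $C$: by Proposition \ref{P:equiv1} and Theorem \ref{T:ext}, $c_{\alpha\lambda}=1$ is equivalent to $d_{\wt{\alpha}\Phi(\lambda)}(v)=v$ or $d_{\Phi(\lambda)\wt{\alpha}}(v)=v$, which reduces the problem to a single row (and one column entry) of the $v$-decomposition matrix attached to $\wt{\alpha}$, pinned down by Mullineux computations (Lemma \ref{L:Mull}, Theorem \ref{T:Mull}), the Jantzen sum formula, and the counting bounds of Corollary \ref{C:cl}. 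Your route trades one row for three, but is viable in principle.

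The genuine gap is that the decisive step is asserted rather than performed. The lemma \emph{is} the explicit identification of four partitions by their $[a,b]$-labels, and your ``main enumeration step'' and ``final, most delicate, step'' never actually produce a single candidate: you do not determine, for any of $\alpha,\beta,\gamma$, which partition at $\del$-level $\del\nu\pm1$ satisfies $\lambda\dom\nu\dom m(\lambda)'$, whether such a partition exists at all (this is exactly where the constraint $a\le e-2$ comes from -- the paper's Step 4 has to argue separately that when $a=e-1$ there is no factor at level $a-b+2$), nor how the two same-level, differently coloured candidates interact when $a=b$, nor what any of these partitions' $[a,b]$-labels are. Saying that ``one recovers exactly the four families'' defers the entire content of the statement to an unexecuted abacus computation. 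Two smaller points: the claim that $d_{\alpha\lambda},d_{\gamma\lambda}$ both nonzero forces $\lambda\in\{\alpha,\gamma\}$ does not follow from ``a short dominance argument'' alone -- the case where $\alpha$ and $\gamma$ both lie strictly between $m(\lambda)'$ and $\lambda$ with $|\del\lambda-\del\alpha|=1$ is not excluded by Lemma \ref{L:basic} (harmless here, since any such $\lambda$ has $c_{\alpha\lambda}\ge2$ anyway, but it signals that the overlap analysis is really needed for the ``if'' direction, to certify that each of the four listed partitions occurs in exactly \emph{one} of the three Weyl modules, not for the ``only if'' direction as you frame it); and you would still need to locate $\beta$, $\gamma$, $m(\alpha')$, $m(\beta')$ and $m(\gamma')$ in the $[a,b]$-labelling and in the dominance order, none of which is recorded in the proposal.
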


\begin{proof}
We prove this in four steps.

\textbf{Step 1. $c_{\alpha\lambda} = 1$ if and only if $d_{\wt{\alpha}\Phi(\lambda)}(v) = v$ or $d_{\Phi(\lambda)\wt{\alpha}}(v) = v$:}  This follows from Proposition \ref{P:equiv1} and Theorem \ref{T:ext}(1).

\textbf{Step 2. $d_{\Phi(\lambda)\wt{\alpha}}(v) = v$ if and only if $\lambda = {[a-1,b]}$:}  By Theorem \ref{T:21}(1,3,4), we have $d_{\Phi(\lambda)\wt{\alpha}}(v) = v$ if and only if $\lambda = \gamma$; furthermore $\gamma = [a-1,b]$ as $\alpha = [a,b]$.

For Steps 3 and 4, the set $S$ consists precisely of the partitions $[a,b+1]_B$, $[a+1,b-1]_B$ (when $a \leq e-2$) and $[a+1,a+2]_B$ (when $a=b\leq e-2$).

\textbf{Step 3. $d_{\wt{\alpha}\Phi(\lambda)}(v) = v$ if $\lambda \in S$:}
    \begin{description}
    \item[Case A. $\lambda = {[a, b+1]}$ with $b \leq 2$, or $\lambda = {[a+1,a+2]}$ with $a = b$ ($\leq e-2$)]  Note that $d_{\wt{\alpha}'\wt{\beta}'}(v) = v$ by Lemma \ref{L:commonwtB} and Theorems \ref{T:21}(3) and \ref{T:wt2vdecomp} (recall that $\gamma'$, $\beta'$ and $\alpha'$ are the exceptional partitions with respect to the $[2:1]$-pair $B'$ and $C'$, with $\gamma' \dom \beta' \dom \alpha'$).  Thus, when $\wt{\beta}$ is $e$-restricted (equivalently, when $b \leq e-2$), we have $d_{\wt{\alpha}m(\wt{\beta}')}(v) = v^2 d_{\wt{\alpha}'\wt{\beta}'}(v^{-1}) = v$ by Theorem \ref{T:Mull}.  Now, $\wt{\beta} = [a-1,b]_C$, and since an abacus display of $\wt{\beta}'$ can be obtained from that of $\wt{\beta}$ by rotating it through an angle of $\pi$ and reading the occupied positions as vacant and vacant position as occupied, we see that $\wt{\beta}' = [e-1-b,e-a]_{C'}$.  Thus,
        $$
        m(\wt{\beta}') = m([e-1-b,e-a]_{C'}) =
        \begin{cases}
        [a,b+1]_C, &\text{if } a > b; \\
        [a+1,a+2]_C, &\text{if } a = b
        \end{cases}
        $$
        by Lemma \ref{L:Mull}. This shows $d_{\wt{\alpha}\Phi(\lambda)}(v) = v$ when $a=b$ and $\Phi(\lambda) = [a+1,a+2]_C$ (which necessarily requires $a = b \leq e-2$), and when $\Phi(\lambda) = [a, b+1]_C$ with $b \leq e-2$.

    \item[Case B. $\lambda = {[a,b+1]}$ with $b = e-1$, or $\lambda = {[a+1,b-1]}$] We first describe the abacus displays of $\Phi(\lambda)$ with $N$ beads, where $N$ is chosen so that $\wt{\alpha}$ is obtained from its $e$-core by sliding the bottom bead on runner $(e-1)$ down two positions.  When $b=e-1$, no runner to the left of runner $(e-2)$ has more beads than runner $(e-1)$, and $[a,b+1]_C$ is obtained from its $e$-core by sliding the bottom bead of runner $(e-2)$ down two positions.  We illustrate this with an example.
$$
\begin{matrix}
\wt{\alpha} = [a,b]_C \\[6pt]
\begin{smallmatrix}
\bullet & \bullet & - & \bullet & - \\
- & - & - & \bullet & - \\
- & - & - & - & \bullet \\
- & - & - & - & -
\end{smallmatrix}
\end{matrix}
\qquad
\begin{matrix}
[a,b+1]_C \\[6pt]
\begin{smallmatrix}
\bullet & \bullet & - & \bullet & \bullet \\
- & - & - & - & - \\
- & - & - & - & - \\
- & - & - & \bullet & -
\end{smallmatrix}
\end{matrix}
$$
For $[a+1,b-1]_C$ (which necessarily requires $a \leq e-2$), there exists some runner having more beads than runner $(e-2)$.  Among these runners having more beads than runner $(e-2)$, let runner $r$ be the one having least number of beads, and if there are more than one such runner, let runner $r$ be the leftmost one.  Then $[a+1,b-1]_C$ is obtained from its $e$-core by sliding one bead each on runners $r$ and $(e-1)$ down one position.  Below is an example.
$$
\begin{matrix}
\wt{\alpha} = [a,b]_C \\[6pt]
\begin{smallmatrix}
\bullet & \bullet & \bullet & \bullet & - \\
- & \bullet & \bullet & \bullet & - \\
- & \bullet & \bullet & - & \bullet \\
- & - & - & - & -
\end{smallmatrix}
\end{matrix}
\qquad
\begin{matrix}
[a+1,b-1]_C \\[6pt]
\begin{smallmatrix}
\bullet & \bullet & \bullet & \bullet & - \\
- & \bullet & \bullet & \bullet & \bullet \\
- & - & \bullet & - & - \\
- & \bullet & - & - & -
\end{smallmatrix}
\end{matrix}
$$
From the descriptions of these partitions, it is easy to see that there does not exist any partition $\mu$ in $C$ satisfying $\Phi(\lambda) \dom \mu \dom \wt{\alpha}$, so that $J_{\wt{\alpha}\Phi(\lambda)} = 1$ (see Theorem \ref{T:Jantzen}), and hence $d_{\wt{\alpha}\Phi(\lambda)}(v) = v$ by Theorem \ref{T:R-H}.
\end{description}

\textbf{Step 4. $d_{\wt{\alpha}\Phi(\lambda)}(v) = v$ only if $\lambda \in S$:}  For this, we use Corollary \ref{C:cl} and the fact established in Step 3 that $d_{\wt{\alpha}\Phi(\lambda)}(v) = v$ if $\lambda \in S$.  Since $\del\wt{\alpha} = a-b+1$, we see that $\del\Phi(\lambda) = a-b$ or $a-b+2$ by Theorem \ref{T:wt2vdecomp}.  As $\del[a,b+1]_C = a-b$, and when $a=b$, $[a,b+1]_C$ ($= [a,a+1]_C$) and $[a+1,a+2]_C$ both have $\del$-value $0$, and are of different colour, we see that if $\del\Phi(\lambda) = a-b$, then $\Phi(\lambda) = [a,b+1]_C$ if $a \ne b$, while $\Phi(\lambda) \in \{ [a,b+1]_C, [a+1,a+2]_C \}$ if $a = b$ by Corollary \ref{C:cl}.  On the other hand, if $\del\Phi(\lambda) = a-b+2$, then $a \leq e-2$: this because when $a = e-1$, then no runner has more beads than runner $(e-2)$ and the partitions which dominate $\wt{\alpha}$ are obtained from their $e$-core by sliding the bottom bead of a runner which has the same number of beads as runner $(e-2)$ down two positions, and their $\del$-values can be checked to be bounded above by $a-b$.  Since $\del[a+1,b-1]_C = a-b+2$, we see that $\Phi(\lambda) = [a+1,b-1]_C$ by Corollary \ref{C:cl}, and the proof is complete.
\end{proof}

\begin{cor}[of proof] \label{C:epsilonvalues}
Suppose $B$ and $C$ form a $[2:1]$-pair, and let $\alpha = [a,b]$.  Then $\varepsilon[a-1,b] = 1$, $\varepsilon[a,b+1] = 1$, $\varepsilon[a+1,b-1] = 0$ (when $a \leq e-2$) and $\varepsilon[a+1,a+2] = 1$ (when $a =b \leq e-2$).
\end{cor}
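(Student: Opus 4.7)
The plan is to identify, for each of the four partitions listed, the $\del$-value that emerges during the proof of Lemma~\ref{L:list}, and then read off $\varepsilon$ using the defining condition $\varepsilon[a',b'] = 1 \iff \del[a',b'] \ne a'-b'$. So each case reduces to a single arithmetic comparison once the relevant $\del$-value has been identified.

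For $\lambda = [a-1,b]_B = \gamma$, Theorem~\ref{T:21}(3) gives $\del\gamma = \del\alpha$, and since $\alpha$ sits in the first case of the $[a,b]$-labelling we have $\del\alpha = a-b$; as $(a-1) - b = a-b-1$ differs from $a-b$, we obtain $\varepsilon[a-1,b]_B = 1$.

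For the remaining three partitions, I first transport the problem to $C$ via Lemma~\ref{L:ep}. The proof of Lemma~\ref{L:list} already supplies the relevant $\del$-values: Step~4 records $\del[a,b+1]_C = a-b$ and $\del[a+1,b-1]_C = a-b+2$, while Step~3 Case~A identifies $[a+1,a+2]_C$ (when $a=b$) as $m(\wt\beta')$, a partition of $\del$-value $0$ (this is also visible directly from the second case of the labelling, which yields $\del = (a+1) - (a+2) + 1 = 0$). The three arithmetic comparisons $a-b$ versus $a-b-1$, $a-b+2$ versus $a-b+2$, and $0$ versus $-1$ then give $\varepsilon = 1$, $\varepsilon = 0$, and $\varepsilon = 1$ respectively; Lemma~\ref{L:ep} transfers these values back to the corresponding partitions in $B$.

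Every ingredient has already been assembled in the preceding lemma, so the corollary is essentially arithmetic bookkeeping and presents no new technical obstacle, which is why it is phrased as a corollary of the proof rather than as a separate argument.
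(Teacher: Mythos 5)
Your proposal is correct and follows exactly the route the paper intends: the paper's one-line proof ("we have seen the $\del$-values of these partitions") refers precisely to the facts you cite, namely $\del\gamma=\del\alpha=a-b$ from Theorem \ref{T:21}(3) together with $\varepsilon\alpha=0$, and the $\del$-values $a-b$, $a-b+2$ and $0$ recorded in Steps 3--4 of the proof of Lemma \ref{L:list}, after which each $\varepsilon$-value is a single comparison with $a'-b'$. The transfer between $B$ and $C$ via Lemma \ref{L:ep} is also handled correctly, since none of the four partitions is $\alpha$.
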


\begin{proof}
We have seen the $\del$-values of these partitions, which thus enable us to compute their $\varepsilon$-values.
\end{proof}

\begin{proof}[Proof of Proposition \ref{P:induction}]
Let $\lambda$ be a partition in $B$ and let $\mu$ be a partition in $B'$.
Unless $k=1$, $\lambda = \alpha$ and $\mu \ne \gamma'$, we have $a_{\lambda\mu} = a_{\Phi(\lambda)\Phi(\mu)}$ by Proposition \ref{P:main}. As the labelling of weight $2$ partitions is invariant under the action of $\Phi$ by Lemma \ref{L:invar}, and $\varepsilon \lambda = \varepsilon \Phi(\lambda)$ when $\lambda \ne \alpha$ by Lemma \ref{L:ep}, we see that $a_{\Phi(\lambda)\Phi(\mu)}$ is as described in Theorem \ref{T:AC} if $a_{\lambda\mu}$ is.

When $k=1$, $\lambda = \alpha$ and $\mu \ne \gamma'$, it is routine to verify that $a_{\wt{\alpha}\Phi(\mu)}$ is as described in Theorem \ref{T:AC} when Theorem \ref{T:AC} holds for $B$ using Proposition \ref{P:alpha}, Lemma \ref{L:list} and Corollary \ref{C:epsilonvalues}.
\end{proof}

\begin{proof}[Proof of Proposition \ref{P:Rouq}]
This follows from the closed formulas obtained by Leclerc and Miyachi \cite[Corollary 10]{LM} for $e_{\lambda\mu}(v)$ when $\lambda$ and $\mu$ are canonical Rouquier partitions.  By Theorem \ref{T:vdecomp}(3,4) and Corollary \ref{C:decomp}, we are able to determine $d_{\lambda\mu}$ and $e_{\lambda\mu}$ when
$\lambda$ and $\mu$ are weight $2$ canonical Rouquier partitions, and use them to verify that Theorem \ref{T:AC} holds for the canonical Rouquier block.  Since the Rouquier blocks of a given weight form a single Scopes equivalence class, the Proposition follows from Proposition \ref{P:induction}.
\end{proof}

We conclude this paper with some equalities which we found in the course of studying the integers $a_{\lambda\mu}$.

\begin{prop}
Let $\lambda$ and $\mu$ be partitions.  Then
\begin{enumerate}
\item $\sum_{\nu} c_{\mu\nu} e_{\nu\lambda} = d_{\lambda\mu}$;
\item $\sum_{\nu} c_{\mu\nu} a_{\nu\lambda} = \sum_{\rho} d_{\rho\mu}d_{\rho'\lambda}$; in particular,
$$
\sum_{\nu} c_{\mu\nu} a_{\nu\lambda} =
\begin{cases}
c_{m(\lambda) \mu }, &\text{if $\lambda$ is $e$-regular}; \\
c_{\lambda m(\mu)}, &\text{if $\mu$ is $e$-regular}. \\
\end{cases}
$$
\end{enumerate}
\end{prop}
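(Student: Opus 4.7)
The proof plan is essentially a direct matrix manipulation exploiting the defining relations collected in the background section.

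For part (1), I would start by substituting the definition $c_{\mu\nu} = \sum_{\rho} d_{\rho\mu} d_{\rho\nu}$ from the $q$-Schur section into the left-hand side, swap the order of summation, and then use the fact that $(e_{\lambda\mu})$ is the inverse matrix of $(d_{\lambda\mu})$. Concretely, since $\sum_{\nu} e_{\lambda\nu} d_{\nu\mu} = \delta_{\lambda\mu}$, both matrices are square and finite (once we restrict to a block, or indeed to partitions of $n$), so the left and right inverse coincide, giving $\sum_{\nu} d_{\rho\nu} e_{\nu\lambda} = \delta_{\rho\lambda}$. Putting the two pieces together,
\[
\sum_{\nu} c_{\mu\nu} e_{\nu\lambda}
= \sum_{\rho} d_{\rho\mu} \sum_{\nu} d_{\rho\nu} e_{\nu\lambda}
= \sum_{\rho} d_{\rho\mu} \delta_{\rho\lambda}
= d_{\lambda\mu}.
\]

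For part (2), I would substitute $a_{\nu\lambda} = \sum_{\sigma} e_{\nu\sigma} d_{\sigma'\lambda}$, interchange the summations, and then apply part (1) to the inner sum:
\[
\sum_{\nu} c_{\mu\nu} a_{\nu\lambda}
= \sum_{\sigma} d_{\sigma'\lambda} \sum_{\nu} c_{\mu\nu} e_{\nu\sigma}
= \sum_{\sigma} d_{\sigma'\lambda} d_{\sigma\mu},
\]
which on relabelling $\sigma = \rho$ is the stated identity.

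For the two specialisations, I would invoke Lemma \ref{L:basic}(2)(c), which says $d_{\tau\nu} = d_{\tau' m(\nu)}$ whenever $\nu$ is $e$-regular. If $\lambda$ is $e$-regular, apply this with $\nu = \lambda$, $\tau = \rho'$ to rewrite $d_{\rho'\lambda} = d_{\rho m(\lambda)}$, so that $\sum_{\rho} d_{\rho\mu} d_{\rho'\lambda} = \sum_{\rho} d_{\rho\mu} d_{\rho m(\lambda)} = c_{m(\lambda)\mu}$. If instead $\mu$ is $e$-regular, apply it with $\nu = \mu$, $\tau = \rho$ to get $d_{\rho\mu} = d_{\rho' m(\mu)}$, and reindex the sum over $\rho$ by $\sigma := \rho'$ to obtain $\sum_{\sigma} d_{\sigma m(\mu)} d_{\sigma\lambda} = c_{\lambda m(\mu)}$. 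There is no real obstacle here; everything follows formally from the definitions of $c_{\lambda\mu}$, $a_{\lambda\mu}$ and the inverse relation between $(d_{\lambda\mu})$ and $(e_{\lambda\mu})$, together with the Mullineux-type symmetry of Lemma \ref{L:basic}(2)(c).
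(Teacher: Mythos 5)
Your proposal is correct and follows essentially the same route as the paper: substitute the definitions of $c_{\mu\nu}$ and $a_{\nu\lambda}$, interchange summations, use the inverse relation between $(d_{\lambda\mu})$ and $(e_{\lambda\mu})$, and apply Lemma \ref{L:basic}(2c) for the two specialisations. Your extra remark that the left and right inverses of the (finite, unitriangular) decomposition matrix coincide is a point the paper uses implicitly; otherwise the arguments are identical.
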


\begin{proof}
For (1), we have
$$
\sum_{\nu} c_{\mu\nu} e_{\nu\lambda}
= \sum_{\nu,\ \rho} d_{\rho\mu}d_{\rho\nu} e_{\nu\lambda}
= \sum_{\rho} d_{\rho\mu} \delta_{\rho\lambda}
= d_{\lambda\mu}.
$$
For (2), we have
$$
\sum_{\nu} c_{\mu\nu} a_{\nu\lambda} = \sum_{\nu,\ \rho} c_{\mu\nu} e_{\nu\rho}d_{\rho'\lambda} = \sum_{\rho} d_{\rho\mu}d_{\rho'\lambda}
$$
by (1).  If $\lambda$ is $e$-regular, then $d_{\rho'\lambda} = d_{\rho m(\lambda)}$ by Lemma \ref{L:basic}(2c), so that $
\sum_{\rho} d_{\rho\mu}d_{\rho'\lambda} = \sum_{\rho} d_{\rho\mu}d_{\rho m(\lambda)} = c_{\mu m(\lambda)} = c_{m(\lambda) \mu}$, while if $\mu$ is $e$-regular, then $d_{\rho\mu} = d_{\rho' m(\mu)}$, so that $\sum_{\rho} d_{\rho\mu}d_{\rho'\lambda} = \sum_{\rho} d_{\rho' m(\mu)}d_{\rho' \lambda} = c_{m(\mu) \lambda} = c_{\lambda m(\mu)}$.
\end{proof}

\end{document}